\pgfplotsset{compat=1.15}
\let\div\undefined
\DeclareMathOperator{\div}{div}
\DeclareMathOperator{\dist}{dist}
\DeclareMathOperator{\Top}{Top}
\newtheorem{thm}{Theorem}
\newtheorem{lemma}[thm]{Lemma}
\newtheorem{prop}[thm]{Proposition}
\newtheorem{claim}[thm]{Claim}
\title{A note on the maximization of the first Dirichlet eigenvalue for perforated planar domains}
\author{Manuel Dias }
\date{December 2022}
\begin{document}

\maketitle

\begin{abstract}
In this work we prove that given an open bounded set $\Omega \subset \mathbb{R}^2$ with a $C^2$ boundary, there exists $\epsilon := \epsilon(\Omega)$ small enough such that for all $0 < \delta < \epsilon$ the maximum of $
    \{
        \lambda_1(\Omega - B_{\delta}(x))
        :
        B_{\delta} \subset \Omega
    \}
$ is never attained when the ball is close enough to the boundary. In particular it is not obtained when $B_\delta(x)$ is touching the boundary $\partial \Omega$. 
\end{abstract}

\section{Introduction}
Consider the Dirichlet eigenvalue problem, in a bounded domain $\Omega \subset \mathbb{R}^2$, given by:
\begin{equation}
\begin{cases}
  -\Delta u = \lambda u  & \text{ in } \Omega \\
   \quad u = 0 &  \text{ on } \partial \Omega
\end{cases}
\end{equation}
where $-\Delta$ admits a purely discrete spectrum $0<\lambda_1(\Omega)< \lambda_2(\Omega)\leq ... \leq \lambda_i(\Omega)\leq ... \rightarrow \infty$. Observe that $\lambda_1(\Omega)$ is a simple eigenvalue, and as a consequence there exists a unique associated positive normalized eigenfunction $u_\Omega$.

In this paper, we study the maximization problem

\begin{equation}
\label{problemSet}
    \max_{B \in \mathcal{B}_\delta} \lambda_1(\Omega - B),
\end{equation}
where $\mathcal{B}_\delta$ is the set of balls $B_\delta(x)$ of fixed radius $\delta>0$ such that $B_\delta(x) \subset \Omega$. It follows from the work of Flucher \cite{approxEigenHole} that, for every point $x \in \Omega$, there exists some radius $R_x$ such that $B_{R_x}(x) \subset \Omega$, and:
\begin{equation}
\label{asymptoticFormula}
    \lambda_1(\Omega - B_r(x))
=
    \lambda_1(\Omega)
+
    u_{\Omega}^2(x)
    \frac{2\pi}{-\log(r/R_x)}
+
    o(\log^{-1}(r)) \quad \text{as } r\downarrow 0.
\end{equation}
This result hints that the maximum of \eqref{problemSet} is attained at a ball $B_\delta(x)$ where $x$ is the maximum of $u_{\Omega}$. However, since $R_x$ in \eqref{asymptoticFormula} depends on the choice of $x$, we can't compare uniformly $\lambda_1(\Omega - B_r(x))$ using this asymptotic formula. In particular it doesn't allow us to consider small balls that are close to the boundary, since $B_r(x)$ must be contained in the bigger ball $B_{R_x}(x) \subset \Omega$.

The first results regarding the maximum of \eqref{problemSet} were obtained when $\Omega$ is a ball $B_R(0)$. In \cite{Anel2D}, Ramm and Shivakumar proved that the maximum of \eqref{problemSet} is attained when $B_\delta(x)$ is concentric with $B_R(0)$. The result was generalized to higher dimensions by Kesavan in \cite{ANELND}. Regarding higher eigenvalues, in \cite{2ndEigenvalueAnel}, El Soufi and Kiwan proved that the maximum of $\lambda_2(B_R(0)-B_\delta(x))$ is attained when the balls are concentric. In the case where $\Omega$ is a convex set, or presents some symmetry, Harrell, Kurata and Kr\"oger proved in \cite{convexDrumsHole} that the maximizer is not attained when the ball touches the boundary. Still in the case where $\Omega$ is a ball $B_R(0)$, in another recent paper \cite{DihedralStuff} by Chorwadwala and Roy, the authors showed that the maximizer $K$ of $\lambda_1(B_R(0) - K)$, for sets $K$ with dihedral symmetry and $K \subset B_R(0)$, must share some symmetry with $B_R(0)$.

Similar problems regarding other types of eigenvalues can be found in the literature. One of them relates to predator-prey systems, where the authors are led to consider shape optimization problem with mixed Dirichlet-Neumann conditions, see \cite{darioStuff, predatorPrayStuff}. Another setting one can consider is Dirichlet-Steklov conditions, which lead to different eigenvalues. For results of this kind see for instance \cite{isoperimetricStekolv, darioStuff}

In this work we assume that $\Omega \subset \mathbb{R}^2$ has a $C^2$ boundary, and we study:
$$
    \lambda_1(\Omega - B_\delta(x))
$$
for balls $B_\delta(x) \subset \Omega$, close to the boundary. We notice that since $\partial \Omega$ is $C^2$, if the radius $\delta$ of the ball $B_\delta(x)$, is small enough, then for all $P \in \partial \Omega$, there exists $x \in \Omega$ such that $B_\delta(x) \subset \Omega$ and $\partial B_\delta(x)$ is touching $\partial \Omega$ at the point $P$. 
\begin{center}
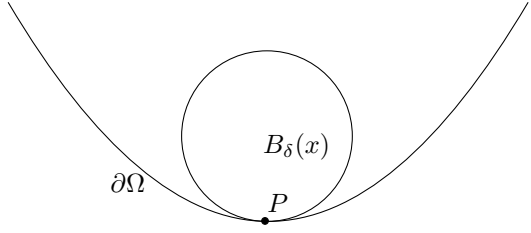
\begin{figure}
    \centering
    \begin{tikzpicture}[x=0.75pt,y=0.75pt,yscale=-0.7,xscale=0.7]

\draw   (278,77.5) .. controls (278,43.53) and (305.53,16) .. (339.5,16) .. controls (373.47,16) and (401,43.53) .. (401,77.5) .. controls (401,111.47) and (373.47,139) .. (339.5,139) .. controls (305.53,139) and (278,111.47) .. (278,77.5) -- cycle ;
\draw   (153,-19) .. controls (277.89,191.67) and (402.78,191.67) .. (527.67,-19) ;
\draw  [fill={rgb, 255:red, 0; green, 0; blue, 0 }  ,fill opacity=1 ] (335.67,138.67) .. controls (335.67,137.33) and (336.75,136.25) .. (338.08,136.25) .. controls (339.42,136.25) and (340.5,137.33) .. (340.5,138.67) .. controls (340.5,140) and (339.42,141.08) .. (338.08,141.08) .. controls (336.75,141.08) and (335.67,140) .. (335.67,138.67) -- cycle ;

\draw (335,73.4) node [anchor=north west][inner sep=0.75pt]    {$B_{\delta }( x)$};
\draw (225,102.4) node [anchor=north west][inner sep=0.75pt]    {$\partial \Omega $};
\draw (338,117.23) node [anchor=north west][inner sep=0.75pt]    {$P$};

\end{tikzpicture}
    \caption{Ball touching parabolic boundary}
\end{figure}

\end{center}
This also implies that there exists a tubular neighborhood of $\partial \Omega$, 

\begin{equation}
\label{DefOfTubNeighbour}
V_d := \{x \in \Omega: \dist(x,\partial \Omega)<d\},
\end{equation}
for $d>0$ small enough, such that for all $p \in V_d$ there exists a unique $z(p) \in \partial \Omega$ such that:
\begin{equation}
\label{tubularUniqueness}
    \min_{y \in \partial \Omega}
    \dist(p, \partial \Omega)
=
    \dist(p, z(p)),
\end{equation}
and $p-z(p)$ is colinear with the normal at $z(p) \in \partial \Omega$. This is not true for $C^{1,\alpha}$ boundaries for $\alpha \in ]0,1[$. Now define:
$$
    \dist_{min}(A, B)
:=
    \inf\{
        \dist(p, B):
        p\in A
    \},
$$
where $A$ and $B$ are two sets of $\mathbb{R}^2$.

In this work, we prove the following
\begin{thm}
\label{RealMainTheorem}
    Given a bounded domain $\Omega \subset \mathbb R^2$ with $C^2$ boundary, there exists $\epsilon := \epsilon(\Omega)>0$ such that for any $\delta \in ]0,\epsilon[$, if $p \in \Omega$ satisfies $B_\delta(p) \subset \Omega$ and
    \begin{equation}
    \label{conditionOnBallForThm}
        \dist_{min}(\partial \Omega, B_\delta(p))
    <
        \delta^2
    \end{equation}
    then there exists $v \in \mathbb R^2$ such that $B_\delta(p+tv) \subset \Omega$ for small $t>0$ and
    \begin{equation}
    \label{conclusionOfPositiveDerivative}
        \left.\frac{d}{dt}\right|_{t=0}\lambda_1(B_\delta(p+tv))>0.
    \end{equation}
    In particular $B_\delta(p)$ is not a solution of \eqref{problemSet}.
\end{thm}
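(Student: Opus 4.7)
I would use Hadamard's shape-derivative formula applied to a translation of the hole. Let $u\in H^1_0(\Omega\setminus\overline{B_\delta(p)})$ be the first Dirichlet eigenfunction with $\|u\|_{L^2}=1$, let $P\in\partial\Omega$ realise $\dist_{\min}(\partial\Omega,B_\delta(p))$, and let $N$ be the inward unit normal to $\Omega$ at $P$. The test direction will be $v=N$; admissibility of $B_\delta(p+tN)\subset\Omega$ for small $t>0$ is immediate. Hadamard's formula gives
$$\left.\frac{d}{dt}\right|_{t=0}\lambda_1(\Omega-B_\delta(p+tv))=\int_{\partial B_\delta(p)}|\nabla u|^2\,(v\cdot\nu)\,dS,$$
with $\nu$ the outward unit normal to the ball. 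Parametrising $\partial B_\delta(p)$ by the angle $\theta\in[0,2\pi)$ measured from the near-boundary point $Q=P+dN$, one finds $v\cdot\nu=-\cos\theta$: the integrand is negative on the near-boundary half $\{\cos\theta>0\}$ and positive on the far-from-boundary half. The goal reduces to proving that the positive contribution strictly dominates the negative one.

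On the near half I would construct a barrier in the thin curved strip between $\partial B_\delta(p)$ and $\partial\Omega$. The $C^2$ regularity of $\partial\Omega$ together with the hypothesis $d:=\dist_{\min}(\partial\Omega,B_\delta(p))<\delta^2$ forces the strip width at angular position $\theta$ to satisfy $w(\theta)\le C(d+\delta\theta^2)\le C\delta(\delta+\theta^2)$. The quadratic barrier $V(\tau,s)=(M/2)s(w(\tau)-s)$ in strip coordinates, with $M$ a $\delta$-uniform upper bound for $\lambda_1(\Omega\setminus B_\delta)\|u\|_\infty$ (controlled by Faber--Krahn and standard $L^\infty$ bounds), yields $u\le Cw^2$ inside the strip and hence the pointwise gradient estimate $|\nabla u|(\theta)\le Cw(\theta)$ on the near-boundary half of $\partial B_\delta(p)$. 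Integrating against $|v\cdot\nu|\le 1$ gives a negative contribution to the shape derivative of absolute value $O(\delta^3)$.

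On the far half I would establish a lower bound $|\nabla u|\gtrsim 1/\log(1/\delta)$. The key observation is that, on the far half, the ball stays at distance at least $\delta$ from $\partial\Omega$, so a capacity/Harnack argument applies. Comparing with the unperturbed eigenfunction $u_\Omega$ --- for which $u_\Omega(p+(3\delta/2)N)\sim\alpha\delta$ by Hopf's lemma at $P$, where $\alpha:=|\partial_N u_\Omega(P)|>0$ --- a quantitative version of the two-dimensional capacitor estimate produces $u\gtrsim\alpha\delta/\log(1/\delta)$ at distance $\delta/2$ from the far tip of the ball, and then a quantitative Hopf bound on $\partial B_\delta(p)$ delivers $|\nabla u|\gtrsim\alpha/\log(1/\delta)$ on a positive-measure piece of the far half. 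Integrating, the positive contribution is at least $c\,\delta/\log^2(1/\delta)$, which dominates the $O(\delta^3)$ negative contribution for $\delta$ small, so that $\lambda_1'(0)>0$ and \eqref{conclusionOfPositiveDerivative} follows.

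The main obstacle is the quantitative lower bound for $|\nabla u|$ on the far half. Because $p$ sits very close to $\partial\Omega$, both $u_\Omega$ and the perturbation $u-u_\Omega$ are of the same small size $\sim\delta$ near the ball, so the naive approximation $u\approx u_\Omega(1-\psi_{B_\delta(p)})$ with $\psi$ the capacity potential has relative error of order one and cannot be used as is. A Green's-function representation of $u-u_\Omega$, together with Harnack-chain control of $u_\Omega$ in the interior of $\Omega\setminus B_\delta(p)$, is what makes the argument go through; the matched-asymptotic expansion must furthermore be carried out uniformly as $p$ ranges over the $\delta^2$-tubular neighbourhood of $\partial\Omega$. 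The barrier estimate on the near half is comparatively routine, but still requires verification of the width expansion $w(\theta)=d+O(\delta\theta^2)$ in terms of the $C^2$ geometry of $\partial\Omega$ and a matching at moderate angles with the standard interior elliptic gradient estimate.
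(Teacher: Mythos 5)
Your overall plan shares the paper's starting point (Hadamard's formula applied to a translation of the hole, with the inward normal as the direction $v$), but the two quantitative claims that would close the argument are both flawed, and the flaw is not a technicality but a scaling error that changes the structure of the proof.

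\textbf{The near-half bound fails near the equator.} The barrier $V(\tau,s)=\frac{M}{2}s\bigl(w(\tau)-s\bigr)$, with $M$ a constant upper bound for $\lambda_1\|u\|_\infty$, vanishes on $\partial\Omega$ and on $\partial B_\delta(p)$, but the thin ``strip'' between them is not a closed region: it opens up at $\theta=\pm\frac{\pi}{2}$, where the width is of order $\delta$ and $u$ is of order $\delta$ (linear in the width, by the Lipschitz bound of Lemma~\ref{LipBound}), whereas $V\lesssim M\delta^2$ there. The maximum principle does not close, and the conclusion $u\le Cw^2$ is simply false near the equator. Correspondingly the implied gradient bound $|\nabla u|\lesssim w$ is false there: the blow-up $\overline{u}_n(x)=u_n(\delta_n x+p_n)/\delta_n$ converges (Lemma~\ref{harmonicFunction}) to a nontrivial harmonic $v$ on $K=\{y>-1\}\setminus B_1(0)$, and $|\nabla v|$ is of order one on the equator of $\partial B_1$, so $|\nabla u|\sim 1$ rather than $\sim w\sim\delta$. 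Integrating $|\nabla u|^2|\cos\theta|\,\delta\,d\theta$ over $\theta\in(\frac{\pi}{4},\frac{\pi}{2})$ already gives a negative contribution of size $\sim\delta$, not $O(\delta^3)$. Your near/far hemispheres each contribute $O(\delta)$, with opposite signs, so the estimate you want is a cancellation estimate, not a domination estimate; the claimed decoupling ``$O(\delta^3)$ versus $\delta/\log^2(1/\delta)$'' does not reflect the actual scalings.

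\textbf{The far-half lower bound is based on the wrong picture.} The capacity heuristic in $2$D suggests a $1/\log(1/\delta)$ loss, but after the blow-up $\overline u_n$, which stays bounded precisely thanks to the Lipschitz estimate $u\lesssim\dist(\cdot,\partial\Omega)$, the limit is a nontrivial harmonic function on $K$; Hopf's lemma then gives $|\nabla u|\gtrsim c>0$ on the top of the circle with \emph{no} logarithm (Proposition~\ref{topInt}). Your discussion in the last paragraph correctly identifies the obstruction (the perturbation $u_\Omega-u$ and $u_\Omega$ are comparable near the ball, so the capacity-potential ansatz is not a small correction), but does not resolve it.

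\textbf{What the paper does instead.} Because the near and far contributions are both $O(\delta)$, the proof needs to compare them with a definite sign. The paper decomposes $\partial B_\delta(p)$ into three arcs ($C_\theta^+$, $C_\theta^-$, $A_\theta$) with a fixed small $\theta$, not two hemispheres. The bottom arc $C_\theta^-$ is bounded by $C\delta\theta^2$ via a Rellich-type integration by parts over a thin box $Q_{\delta,\theta,p}$ (Proposition~\ref{bottomThm}), converting the boundary integral into side energies $\int_{x=\pm\delta\sin\theta}|\nabla u|^2$, then applying a Caccioppoli estimate; no pointwise boundary gradient bound is used, which is how it avoids the ends-of-the-strip obstruction. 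The side arcs $A_\theta$ are shown to contribute nonnegatively (Proposition~\ref{positiveIntegrals}) by blow-up: one proves that the limit $v$ is even in $x$, analyses a blow-down at infinity to pin down the far-field $v\sim\alpha y$, deduces $\partial_\theta v\ge 0$ for $x>0$, and applies Hopf's lemma to $\partial_\theta v$ to get $\partial_\theta\partial_\nu v>0$ on $\partial B_1$; this is the quantitative ``$|\nabla u|$ increases from bottom to top'' monotonicity that produces the positivity. Finally the top arc $C_\theta^+$ gives $\ge c\theta\delta$, and the three estimates combine as $\delta(c\theta-C\theta^2)>0$ for $\theta$ fixed small. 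Your hemisphere split bypasses the side-positivity lemma but, because of the scaling errors above, cannot deliver the comparison it promises.
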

More precisely, we will show that \eqref{conditionOnBallForThm} implies that $z(p) \in \partial \Omega$ is well defined, and $v$ is the unit vector from $z(p)$ to $p$. With this choice, equation \eqref{conclusionOfPositiveDerivative} implies that minimizing balls of radius small enough, must be searched away from the boundary.

Given $p$ and $\delta>0$, let $v$ from Theorem \ref{RealMainTheorem} be given by $e_y$ (we can always make a rotation of the domain since this does not change the eigenvalues or eigenfuntions). This entire work will rely on the Hadamard's formula, found in \cite[Theorem 2.5.1]{Henrot}. In particular we will show that it implies that the formula \eqref{conclusionOfPositiveDerivative} can be written as
\begin{equation}
    \left.\frac{d}{dt}\right|_{t=0}\lambda_1(B_\delta(p+te_y))
    =
        \int_{\partial B_\delta(p)}
    \frac{\partial u_{\delta,p}}{\partial y}
    \left(\frac{\partial u_{\delta,p}}{\partial \nu}\right)
    d\mathcal{H}^1,
\end{equation}
where $u_{\delta,p}$ is the normalized first Dirichlet eigenfunction of $\Omega - B_\delta(p)$. To conclude we will prove that
\begin{equation}
    \label{integralDerivativeStuff}
        \int_{\partial B_\delta(p)}
    \frac{\partial u_{\delta,p}}{\partial y}
    \left(\frac{\partial u_{\delta,p}}{\partial \nu}\right)
    d\mathcal{H}^1
    >
    0.
\end{equation}
under the conditions of Theorem \ref{RealMainTheorem}.

The first section is some preliminaries of this work. We state some of the main theorems used from references, and how we will use them. We also show some generic properties of the first eigenfunctions $u_{\delta, p} \in H^1_0(\Omega - B_\delta(p))$ that are independent of $p$ and $\delta$, such as bounds on the eigenvalues, and bounds on the eigenfunctions.

The next sections of the proof will rely in dividing $\partial B_\delta(p)$ in three different pieces $C_\theta^+$, $C_\theta^-$, $A_\theta$ given by \eqref{TopCircleDef}, \eqref{BotCircleDef}, \eqref{SidesDef} and estimating the integral \eqref{integralDerivativeStuff}, restricted to each of these set. The plan of the proof will be to analyse blowups given by:
    \begin{equation}
    \label{blowupIntro}
        \overline{u}_{n}(x,y)
    :=
        \frac{u_n(\delta_n(x,y) + p_n)
        }{\delta_n}.
    \end{equation}
for suitably chosen $p_n$ and $\delta_n$ given by a contradiction argument.
These will converge to a positive harmonic function $v:K \rightarrow \mathbb{R}$ in the set:
    \begin{equation}
    \label{upHalfPlaneWithoutBall}
        K:=\{y>-1\}-B_1(0).
    \end{equation}
Given a fixed $\theta$, we will use regularity theory to compare $\overline{u}_n$ in the sets $C_\theta^+$ and $A$. This is possible because these will correspond to smooth parts of $K$. On the other hand we will not be able to use regularity theory to study the integral in the set $C_\theta^-$, since this is close to a singularity of the set $K$. We will need the following regularity theorems

The second section is devoted to proving an integral bound for $u_{\delta,p}$ such that one can prove that the blowup limits of the sequences \eqref{blowupIntro} are non-trivial. This will be done by obtaining a lower bound on integrals of $u_{\delta,p}$ in specific sets, by comparing to harmonic functions. It will also take advantage of the fact that for the first eigenfunction $u_\Omega$ of the set $\Omega$, there exists $\Lambda > 0$ such that:
    $$
        \min_{z \in \partial \Omega}
        \frac{\partial u_\Omega(z)}{\partial \nu}
    \geq
        \Lambda.    
    $$
    This is true by application of H\"opf's lemma, and the fact that $u_\Omega \in C^1(\overline{\Omega})$. Also for small balls $B_\delta(p)$ close enough to the boundary then the eigenfunctions $u_{\delta,p} \in H^1(\Omega-B_\delta(p))$ will be close in $H^1$ to $u_\Omega$. Thus using regularity theory, for a set in the interior $V \Subset \Omega$ the functions $u_{\delta,p}$ and $u_{\Omega}$ will also be close in the $C^0(V)$ norm. This will then be used to create harmonic functions that are below $u_{\delta,p}$ and allow non-trivial lower bounds that are preserved under blowups.

    The third section is devoted to proving an upper bound for the integral:
    $$
    \left|
        \int_{C_\theta^-}
        \left(
        \frac{\partial u_{\delta,p}}{\partial \nu}
    \right)^2
    \langle
        e_y,\nu
    \rangle
    d\mathcal{H}^1
    \right|,
    $$
    which does not come directly from the blowup argument.
    The upper bound on this integral will be obtained by choosing $\theta$ small enough, and a choice of distance $d := \dist_{min}(p, \partial \Omega)$, such that there exists $C>0$ independent of $\delta$ and $p$ such that:
    \begin{equation}
    \label{upBound}
        \left|
        \int_{C_\theta^-}
        \left(
        \frac{\partial u_{\delta,p}}{\partial \nu}
    \right)^2
    \langle
        e_y,\nu
    \rangle
    d\mathcal{H}^1
    \right|
    \leq
    C
    \delta
    \theta^2.
    \end{equation}
    This will be done by taking advantage of the proximity of the balls $B_\delta(p)$, to the boundary $\partial \Omega$, in particular using the geometric fact that the height of a ball is quadratic with the angle $\theta$, when $\theta$ is close to zero.

    Finally the penultimate section is divided into two subsections. In the first one we prove that if $\delta$ is small enough and $\dist_{min}(\partial B_\delta(p),p)< \delta^2$, then:
    \begin{equation}
    \label{positiveSidesIntroduction}
        \int_{A}
        \left(
        \frac{\partial u_{\delta,p}}{\partial \nu}
    \right)^2
    \langle
        e_y,\nu
    \rangle
    d\mathcal{H}^1
    \geq
    0.
    \end{equation}
    This is done through a contradiction, and use of a blowup argument on the sequence \eqref{blowupIntro} and use of regularity theory. The blow up will be done along balls $B_{\delta_n}(p_n)$ satisfying:
    $$
        \dist_{min}(\partial B_{\delta_n}(p_n), \partial \Omega) < \delta_n^2,
    $$
    obtaining in the limit, a positive harmonic function $v : K \rightarrow \mathbb{R}$. Properties of this harmonic function are studied. In particular for the side integrals, one is interested in studying $\frac{\partial v}{\partial \theta}$. In particular the one will prove that for $x>0$ then:
    $$
        \frac{\partial v}{\partial \theta}(x,y)
        \geq 0
    $$
    By use of H\"opf's lemma, we would then have that:
    $$
        \frac{\partial^2 v}{\partial \theta \partial \nu}(x,y)
    > 0,
    $$
    from which we would be able to conclude that the side integrals for the function $v$ are positive, and so must also be for the sequence $u_n$.

    By use of the fact that $\frac{\partial v}{\partial \theta}$ is harmonic, to prove it is positive for $x>0$, one can reduce the question to proving that it is positive in an arc:
    $$
        \partial B_R(0) \cap \{x>0, y>-1\},
    $$
    and this $R$ can be as big as one wants. Thus to study $\frac{\partial v}{\partial \theta}$, one uses a blowdown of the harmonic function $v$. Given a sequence $R_n \rightarrow \infty$ one considers:
    $$
        v_n(x,y)
    :=
        \frac{1}{R_n}
        v(R_n(x,y) + (-1,0)).
    $$
    One can prove that this sequence will converge to a linear function given by $\alpha y$, for some $\alpha \geq 0$. Again using regularity theory, for $z \in \partial B_{R_n}$, we will have that $\nabla v(z)$ will be close to $\alpha e_y$, if $R_n$ is big. With this we conclude that if $Z_n = \partial \left(B_{R_n}\cap \{x>0,y>-1\}\right)$ then
    $$
        \lim_n
        \inf_{(x,y) \in Z_n}
        \frac
        {\partial v(x,y)}
        {\partial \theta}
        \geq0
    $$
    from which one can conclude the desired positivity of $\frac{\partial v}{\partial \theta}$.

    The second subsection is devoted to proving that there exists $c>0$ and $\tilde{\delta}>0$ such that for $\delta < \tilde{\delta}$ and $p \in \Omega$ satisfying $\dist_{min}(\partial B_\delta(p), \partial \Omega) < \delta^2$, then:
    \begin{equation}
    \label{introTopIntegral}
        \int_{C_\theta^+}
        \frac{\partial u_{\delta,p}}{\partial y}
        \left(
        \frac{\partial u_{\delta,p}}{\partial \nu}
        \right) \geq c\theta \delta.
    \end{equation}
This will be done again by contradiction, using a blowup argument with limit $v$ which is a nontrivial positive harmonic function. Then through use of H\"opf's lemma and regularity theory we will be able to arive at a contradiction, concluding \eqref{introTopIntegral}. We will use formulas \eqref{upBound}, \eqref{positiveSidesIntroduction} and \eqref{introTopIntegral} to prove \eqref{integralDerivativeStuff}.

\subsubsection*{Acknowledgements}
The author is supported by the Research Foundation – Flanders (FWO) via the Odysseus programme Geometric and analytic properties of metric measure spaces with spectral curvature constraints, with applications to manifold learning (G0DBZ23N). Special thanks to professor Hugo Tavares for helping with the writting at an initial stage, and David Tewodrose for helping with the writting and revisions of the final work.

\section{Preliminaries}
    
    \subsection{Notation}
    \begin{itemize}
        \item $\Omega \subset \mathbb R^2$ a bounded open connected set with $C^2$ boundary.
        \item $u_\Omega:\Omega \rightarrow \mathbb R$ the normalized first Dirichlet eigenfunction of $\Omega$.
    \end{itemize}
    Given $\delta>0$ and $p \in \Omega$ such that $B_\delta(p) \subset \Omega$, let
    \begin{itemize}
        \item $\lambda_{\delta,p}$ be the first eigenvalue of $\Omega - B_\delta(p)$. \item $u_{\delta,p}$ the first normalized Dirichlet eigenfunction of $\Omega - B_\delta(p)$.
        \item $V_d = \{x\in \Omega : d(x,\partial \Omega) < d\}.$
        \item $
            \dist_{min}(A, B)
        =
            \inf\{
                \dist(p, B):
                p\in A
            \}.$, tubular neighborhood of $\partial \Omega$.
        \item $C_\theta^+
        =
            \left\{z \in \partial B_\delta(p):
            \frac{\langle z-p, e_y \rangle}{|z-p|}
            \geq
            \cos(\theta)
            \right\}.$
        \item $C_\theta^-
        =
            \left\{
            z \in \partial B_\delta(p):
            \frac{\langle z-p, e_y \rangle}{|z-p|}
            \leq
            -
            \cos(\theta)
            \right\}.$
        \item $A_\theta = \left\{
            z \in \partial B_\delta(p):
            -\cos(\theta)
            \leq
            \frac{\langle z-p, e_y \rangle}{|z-p|}
            \leq
            \cos(\theta)
        \right\}.$
        \item $K=
        \{
            (x,y) \in \mathbb{R}^2:
            y > -1
        \}    
    -
        B_{1}(0).$
        \item $z:V_d \rightarrow \partial \Omega$ gives unique point such that $\min_{y \in \partial \Omega}
    \dist(p, \partial \Omega)
=
    \dist(p, z(p))$.
        \item $Q_{\delta, \theta, p}
        :=
            \{
                (x,y) \in \Omega- B_\delta(p):
                -\delta \sin(\theta)\leq x \leq \delta \sin(\theta);
                \quad
                -\tilde{C}\delta^2
                \leq
                y
                \leq
                d_{\delta,p}
                +
                \delta- \delta \cos(\theta)
            \}.$
        \item $P_{M, \delta, d}(p)
        :=
            \left\{(x,y):
                (y-(p_y+\delta))
                \geq
                M(x-p_x)^2 
                \text{ and }
                (y-(p_y+\delta)) \leq \frac{3}{2}d
            \right\}$.
            \item$
                \text{Top}_{M,\delta, d}(p)
            :=
                P_{M,\delta, d}(p) \cap \left\{ (y-(p_y+\delta)) = \frac{3}{2}d\right\},
            $
            \item$
            Q_{\delta}(p)
        :=
            [p_x-\delta, p_x+\delta]
            \times
            [p_y,p_y+2\delta].
            $
            \item$
                \overline{Q}_{\delta}(p)
            :=
                [p_x-\delta,p_x+\delta]
                \times
                [p_y+\frac{3}{2}\delta, p_y+2\delta]
            $
            \item $\mathbb{H} = \{(x,y) \in \mathbb{R}^2: y>0\}$ 
    \end{itemize}
    
    \subsection{Hadamard's formula}

        \begin{thm}\cite[Theorem 2.5.1]{Henrot}
            Let $\Omega \subset \mathbb R^N$ be an open connected bounded subset with $C^2$ boundary. Given $V$ a smooth vector field, and $\Phi(t) := Id_{\mathbb{R}^N} + tV$, define $\Omega_t := \Phi(t)(\Omega)$. Let $u_t \in H^1_0(\Omega_t)$ be the first eigenfunction of the Dirichlet Laplacian such that $||u_t||_{L^2(\Omega_t)} = 1$. Then $\lambda_1(\Omega_t)$ is differentiable at $t = 0$ and:
            $$
                \left. \frac{d}{dt}\right|_{t=0}
                \lambda_1(\Omega_t) 
            =
                -\int_{\partial \Omega}
                \left(
                    \frac{\partial u}{\partial \nu}
                \right)^2
                \langle
                    V,
                    \nu
                \rangle
                d \mathcal{H}^{N-1}.
            $$
            where $\nu$ is the exterior normal.
        \end{thm}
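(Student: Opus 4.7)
The plan is to transfer the problem from the moving family of domains $\Omega_t$ to the fixed domain $\Omega$ via the pullback induced by $\Phi(t) = \mathrm{Id} + tV$, and then differentiate at $t = 0$. Writing $\tilde u_t := u_t \circ \Phi(t) \in H^1_0(\Omega)$, the weak formulation of $-\Delta u_t = \lambda(t) u_t$ on $\Omega_t$ becomes, after the change of variables,
\begin{equation*}
    \int_\Omega A(t) \nabla \tilde u_t \cdot \nabla \varphi \, J(t) \, dx = \lambda(t) \int_\Omega \tilde u_t \varphi \, J(t) \, dx, \qquad \forall \varphi \in H^1_0(\Omega),
\end{equation*}
with normalization $\int_\Omega \tilde u_t^{\,2} J(t) \, dx = 1$, where $J(t) := \det D\Phi(t)$ and $A(t) := (D\Phi(t))^{-1}(D\Phi(t))^{-T}$. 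These coefficients are smooth (in fact polynomial) in $t$, with $A(0) = I$, $J(0) = 1$, and derivatives $\dot A = -(DV + DV^T)$ and $\dot J = \operatorname{div} V$ at $t = 0$.

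First I would establish that $t \mapsto (\tilde u_t, \lambda(t))$ is differentiable at $0$. Since $\lambda_1(\Omega)$ is simple, one may apply the implicit function theorem to the map
\begin{equation*}
    F(t, w, \mu) := \bigl(-\operatorname{div}(J(t) A(t) \nabla w) - \mu J(t) w,\; \tfrac{1}{2}\!\int_\Omega w^2 J(t) \, dx - \tfrac{1}{2}\bigr)
\end{equation*}
near $(0, u_\Omega, \lambda_1(\Omega))$, supplemented by a linear constraint (for instance $\int_\Omega (w - u_\Omega) u_\Omega \, dx = 0$) in order to kill the one-dimensional kernel of the linearization. This yields derivatives $\dot{\tilde u} \in H^1_0(\Omega)$ and $\lambda'(0) \in \mathbb{R}$.

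Next I would differentiate the pulled-back Rayleigh identity $\lambda(t) = \int_\Omega A(t) \nabla \tilde u_t \cdot \nabla \tilde u_t \, J(t) \, dx$ at $t = 0$, obtaining, with $u := u_\Omega$,
\begin{equation*}
    \lambda'(0) = \int_\Omega \dot A \nabla u \cdot \nabla u \, dx + 2 \int_\Omega \nabla \dot{\tilde u} \cdot \nabla u \, dx + \int_\Omega \dot J |\nabla u|^2 \, dx.
\end{equation*}
Integrating by parts in the middle term and using $-\Delta u = \lambda u$ together with $\dot{\tilde u} \in H^1_0(\Omega)$ gives $\int_\Omega \nabla \dot{\tilde u} \cdot \nabla u = \lambda \int_\Omega u \dot{\tilde u}$, while differentiating the normalization constraint gives $2 \int_\Omega u \dot{\tilde u} = -\int_\Omega \dot J u^2$. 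Combining,
\begin{equation*}
    \lambda'(0) = \int_\Omega \dot A \nabla u \cdot \nabla u \, dx + \int_\Omega \dot J \bigl(|\nabla u|^2 - \lambda u^2\bigr) dx.
\end{equation*}

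The final step, which is where the real work sits, is converting this bulk expression into the advertised boundary integral. Substituting $\dot A = -(DV + DV^T)$ and $\dot J = \operatorname{div} V$, the right-hand side becomes exactly the combination appearing in a Pohozaev-type identity obtained by multiplying $-\Delta u = \lambda u$ by $V \cdot \nabla u$ and integrating by parts over $\Omega$; all interior contributions cancel by matching $D^2 u$, $DV$ and $\operatorname{div} V$ terms. What survives is only a boundary term, and there the Dirichlet condition $u = 0$ on $\partial \Omega$ forces $\nabla u = (\partial_\nu u)\nu$, so $V \cdot \nabla u = (\partial_\nu u) \langle V, \nu \rangle$ and $|\nabla u|^2 = (\partial_\nu u)^2$ on $\partial \Omega$. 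Plugging these in collapses the expression to $-\int_{\partial \Omega} (\partial_\nu u)^2 \langle V, \nu \rangle \, d\mathcal{H}^{N-1}$, as claimed. The main obstacle is entirely bookkeeping: several bulk terms involving $DV$, $\operatorname{div} V$ and $D^2 u$ have to cancel in the correct way, and the assumed $C^2$ regularity of $\partial \Omega$ together with elliptic regularity of $u_\Omega$ is precisely what makes these manipulations rigorous.
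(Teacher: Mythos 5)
The paper does not prove this statement; it is quoted verbatim as a citation from Henrot's book (Theorem 2.5.1), and the rest of the paper only \emph{uses} the formula. There is therefore no in-paper proof to compare against. Your proposal is, however, essentially the standard derivation from that reference: pull back to the fixed domain via $\Phi(t)$, obtain the transformed bilinear form with coefficients $A(t) = (D\Phi)^{-1}(D\Phi)^{-T}$ and $J(t) = \det D\Phi$, establish differentiability of $(\tilde u_t, \lambda(t))$ via the implicit function theorem using simplicity of $\lambda_1$, differentiate the Rayleigh identity and the normalization, and convert the bulk expression $\int \dot A \nabla u \cdot \nabla u + \int \dot J(|\nabla u|^2 - \lambda u^2)$ into the boundary integral by a Pohozaev-type identity (multiply $-\Delta u = \lambda u$ by $V\cdot\nabla u$, integrate by parts, and use $\nabla u = (\partial_\nu u)\nu$ on $\partial\Omega$). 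All the computations you indicate ($\dot A = -(DV + DV^T)$, $\dot J = \operatorname{div} V$, the cancellation of the interior terms) check out, and the $C^2$ boundary regularity you invoke is exactly what guarantees $u \in H^2(\Omega)\cap C^1(\overline\Omega)$ so that the integrations by parts and the boundary trace identities are legitimate. The one place you are deliberately terse — the implicit function theorem step — is the part where the real analytic content lives (choosing the right Banach spaces, verifying the linearized operator with the added normalization/orthogonality constraint is an isomorphism), and a fully rigorous write-up would need to expand it; but as a proof outline it is correct and complete.
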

        Consider balls of small enough radius such that $B_\delta(p) \Subset V_d$. Let $V$ be a vector field such that $V = \frac{p-z(p)}{|p-z(p)|}$ in a neighborhood a neighborhood of $\partial B_\delta(p)$, and $V = 0$ in $\partial \Omega$. By taking $\Phi(t) = Id_{\mathbb{R}^2} + tV$, we have that: 
        $$
            \Phi(t)(\Omega - B_\delta(p))
        =
            \Omega - B_\delta(p + t\frac{p-z(p)}{|p-z(p)|}).
        $$
        By a simple rotation we can assume that $\frac{p-z(p)}{|p-z(p)|}$ is the vector $e_y$ (Throughout the proof we make this assumption, since the rotation does not change the eigenvector). Then we have by Hadamard's formula
        \begin{equation}
        \label{DerivativeEqualToSomeIntegral}
            \left.\frac{d}{dt}\right|_{t=0}
            \lambda_1\left(
            \Phi(t)
            (\Omega - B_\delta(p))
            \right)
            =
            \int_{\partial B_\delta(p)}
            \left(\frac{\partial u_{\delta,p}}{\partial \nu}\right)^2
            \langle
                e_y,\nu
            \rangle
            d\mathcal{H}^1
            =
            \int_{\partial B_\delta(p)}
            \frac{\partial u_{\delta,p}}{\partial y}
            \left(\frac{\partial u_{\delta,p}}{\partial \nu}\right)
            d\mathcal{H}^1,
        \end{equation}
        where $\nu$ is the exterior normal to $\partial B_\delta(p)$ (which is interior to $\Omega-B_\delta(p)$). The equality above follows since $u_{\delta,p}|_{\partial B_\delta(p)} = 0$, thus $\nabla u_{\delta,p}(x) = \frac{\partial u_{\delta,p}}{\partial \nu}(x) \nu$ for $x \in \partial B_\delta(p)$. As such we have that for $x\in \partial B_\delta(p)$
        \begin{equation}
            \frac{\partial u_{\delta,p}}{\partial y}(x)
        =
            |\nabla u(x)|
            \langle
                e_y,
                \nu
            \rangle
        =
            \left(\frac{\partial u_{\delta,p}}{\partial \nu}(x)\right)
            \langle
                e_y,
                \nu
            \rangle.
        \end{equation}
        Given an angle $\theta \in ]0, \frac{\pi}{2}[$, define the sets given by
\begin{equation}
\label{TopCircleDef}
    C_\theta^+
:=
    \left\{z \in \partial B_\delta(p):
    \frac{\langle z-p, e_y \rangle}{|z-p|}
    \geq
    \cos(\theta)
    \right\},
\end{equation}
\begin{equation}
\label{BotCircleDef}
    C_\theta^-
:=
    \left\{
    z \in \partial B_\delta(p):
    \frac{\langle z-p, e_y \rangle}{|z-p|}
    \leq
    -
    \cos(\theta)
    \right\},
\end{equation}
\begin{equation}
    \label{SidesDef}
    A_\theta := \left\{
            z \in \partial B_\delta(p):
            -\cos(\theta)
            \leq
            \frac{\langle z-p, e_y \rangle}{|z-p|}
            \leq
            \cos(\theta)
        \right\}.
\end{equation}
an image of these is provided by figure \eqref{decompOfCircle}.
    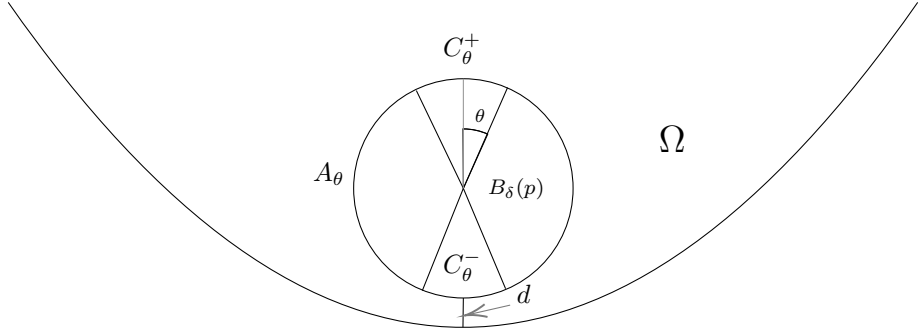
\begin{figure}
    \centering
        
    \begin{tikzpicture}[x=0.75pt,y=0.75pt,yscale=-1,xscale=1]
    
    \draw   (101,42) .. controls (253.89,260.67) and (406.78,260.67) .. (559.67,42) ;
    \draw   (275.2,135.87) .. controls (275.2,105.31) and (299.97,80.53) .. (330.53,80.53) .. controls (361.09,80.53) and (385.87,105.31) .. (385.87,135.87) .. controls (385.87,166.43) and (361.09,191.2) .. (330.53,191.2) .. controls (299.97,191.2) and (275.2,166.43) .. (275.2,135.87) -- cycle ;
    \draw    (352.83,85) -- (330.53,135.87) ;
    \draw    (306.83,86.5) -- (330.53,135.87) ;
    \draw    (330.53,135.87) -- (309.83,187.5) ;
    \draw    (330.53,135.87) -- (351.83,186.5) ;
    \draw  [draw opacity=0] (330.3,105.87) .. controls (330.38,105.87) and (330.46,105.87) .. (330.53,105.87) .. controls (334.92,105.87) and (339.08,106.81) .. (342.83,108.5) -- (330.53,135.87) -- cycle ; \draw   (330.3,105.87) .. controls (330.38,105.87) and (330.46,105.87) .. (330.53,105.87) .. controls (334.92,105.87) and (339.08,106.81) .. (342.83,108.5) ;  
    \draw [color={rgb, 255:red, 151; green, 151; blue, 151 }  ,draw opacity=1 ]   (330.53,135.87) -- (330.53,80.53) ;
    \draw    (330.53,191.2) -- (330.33,206) ;
    \draw [color={rgb, 255:red, 122; green, 122; blue, 122 }  ,draw opacity=1 ]   (354.13,194.8) -- (333.28,199.56) ;
    \draw [shift={(331.33,200)}, rotate = 347.15] [color={rgb, 255:red, 122; green, 122; blue, 122 }  ,draw opacity=1 ][line width=0.75]    (10.93,-3.29) .. controls (6.95,-1.4) and (3.31,-0.3) .. (0,0) .. controls (3.31,0.3) and (6.95,1.4) .. (10.93,3.29)   ;
    
    \draw (341.7,129.4) node [anchor=north west][inner sep=0.75pt]  [font=\footnotesize]  {$B_{\delta }( p)$};
    \draw (427.8,103) node [anchor=north west][inner sep=0.75pt]  [font=\Large]  {$\Omega $};
    \draw (319,56.23) node [anchor=north west][inner sep=0.75pt]    {$C_{\theta }^{+}$};
    \draw (335,94.4) node [anchor=north west][inner sep=0.75pt]  [font=\scriptsize]  {$\theta $};
    \draw (319,165.73) node [anchor=north west][inner sep=0.75pt]    {$C_{\theta }^{-}$};
    \draw (253.5,121.73) node [anchor=north west][inner sep=0.75pt]    {$A_\theta$};
    \draw (356.28,183.4) node [anchor=north west][inner sep=0.75pt]    {$d$};

    \end{tikzpicture}
    \caption{Image of decomposition of the circle}
    \label{decompOfCircle}
    \end{figure}
    The study of the integral \eqref{DerivativeEqualToSomeIntegral} will be done by restricting it to each of these and prove the positivity there.

    \subsection{Elliptic regularity}
        \begin{thm}
        \label{ClassicReg}
            Let $V \Subset W \subset \Omega$ be open sets. There exists $C = C(V,W, m)$ such that for all $v \in H^1(\Omega)$ satisfying:
            $$
                -\Delta v = f \text{ in } W,
            $$
            then:
            $$
                ||v||_{H^m(V)}
            \leq
                C
                (
                    ||f||_{H^{m-2}(W)}
                +
                    ||v||_{L^2(W)}
                ).
            $$
        \end{thm}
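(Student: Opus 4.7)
The plan is to establish this classical interior estimate by induction on $m$, combining Nirenberg's difference--quotient method for the base case with differentiation of the equation for the inductive step. First I would fix a finite chain of nested open sets $V = V_0 \Subset V_1 \Subset \cdots \Subset V_m = W$ together with cutoff functions $\eta_j \in C^\infty_c(V_{j+1})$ satisfying $\eta_j \equiv 1$ on $V_j$; these allow me to localize the argument at each level while absorbing all geometric dependence into a single constant $C(V,W,m)$.

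For the base case $m = 2$ I would apply the Nirenberg difference--quotient technique. Testing the weak formulation $\int \nabla v \cdot \nabla \varphi = \int f \varphi$ against $\varphi = -D_k^{-h}(\eta_0^2\, D_k^h v)$, where $D_k^h$ is the discrete difference quotient in direction $e_k$ and $|h|$ is small enough that $\operatorname{supp}\varphi \subset V_1$, and performing the standard discrete integration by parts with Cauchy--Schwarz absorption, yields
$$
    \|\eta_0\, D_k^h \nabla v\|_{L^2}^2
    \leq
    C\bigl(\|f\|_{L^2(V_1)}^2 + \|v\|_{L^2(V_1)}^2\bigr)
$$
uniformly in $h$. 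Sending $h \to 0$ gives $\partial_k \nabla v \in L^2(V_0)$ with the desired $H^2$ bound. (The case $m=1$ is handled analogously by testing the equation against $\eta_0^2 v$, giving a Caccioppoli estimate.)

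For the inductive step ($m \geq 3$), suppose the result holds with $m$ replaced by $m-1$. By the preceding cases $v \in H^{m-1}_{\mathrm{loc}}(W)$, so each first partial $w_k := \partial_k v$ satisfies $-\Delta w_k = \partial_k f$ distributionally on $V_{m-1}$. Applying the inductive hypothesis to $w_k$ on the pair $V_0 \Subset V_1$ gives
$$
    \|w_k\|_{H^{m-1}(V_0)}
    \leq
    C\bigl(\|\partial_k f\|_{H^{m-3}(V_1)} + \|w_k\|_{L^2(V_1)}\bigr)
    \leq
    C\bigl(\|f\|_{H^{m-2}(W)} + \|v\|_{H^1(V_1)}\bigr),
$$
and the $m=1$ base case applied on $V_1 \Subset V_2$ bounds $\|v\|_{H^1(V_1)}$ by $\|f\|_{L^2(V_2)} + \|v\|_{L^2(V_2)}$, closing the induction since $\|\nabla v\|_{H^{m-1}(V_0)}$ controls $\|v\|_{H^m(V_0)}$ up to the already--controlled $L^2$ term.

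The hard part is really just careful bookkeeping: ensuring that the chain of cutoffs and nested intermediate domains propagates cleanly so the final constant depends only on $V$, $W$, and $m$, and justifying at each step that the equation may legitimately be differentiated, which holds precisely because the previous step supplies the local Sobolev regularity needed. The analytic inputs themselves (the difference--quotient identity and the absorption via Cauchy--Schwarz) are entirely local and standard, so the result is essentially a careful iteration of the $m=2$ argument.
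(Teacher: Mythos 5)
The paper states Theorem~\ref{ClassicReg} as a classical fact (``the most classical version of an interior regularity theorem'') and gives no proof of its own, so there is no internal argument to compare against. Your sketch is the standard textbook proof (Nirenberg difference quotients for the $H^2$ base case, then differentiating the equation and inducting on $m$ through a chain of nested subdomains with cutoffs), and the logic is sound: the $m=2$ case supplies the local regularity needed to justify differentiating the equation, the inductive hypothesis applied to each $\partial_k v$ gives the $H^{m-1}$ bound on $\nabla v$, and the Caccioppoli estimate closes the $\|v\|_{H^1}$ term back to $\|v\|_{L^2}$. This is correct and consistent with the references the paper would implicitly rely on for such a result.
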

        This first regularity theorem is the most classical version of an interior regularity theorem. This will be used to derive some bounds for $u_{\delta,p}$ in the interior of the set $\Omega-B_\delta(p)$ far from the boundary.

        \begin{thm}\cite{regularity}[Theorem 2.14]
        Let $\Omega \subset \mathbb{R}^n$ be an open set, and $u \in L^\infty(\Omega)$ satisfy:
        $$
            -\Delta
            u(x)
            =
            g(x)
        $$
        for $x \in \Omega$. Then $u \in C^{1,1}_{loc}(\Omega)$ and
        $$
            ||u||_{C^{1,1}(V)}
        \leq
            C
            \left(
                1
                +
                ||u||_{L^\infty(\Omega)}
                +
                ||g||_{L^\infty(\Omega)}
            \right)
        $$
        for $V\Subset \Omega$ where $C = C(\dist_{min}(V, \partial \Omega))$.
    \end{thm}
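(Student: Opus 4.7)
The plan is to reduce to a representation by the Newtonian potential via a cutoff, run a Calder\'on--Zygmund bootstrap to upgrade the integrability of $u$, and then extract a pointwise bound on the Hessian by splitting the potential into a near-field and a far-field piece. Fix intermediate sets $V \Subset V' \Subset W \Subset \Omega$ and a cutoff $\chi \in C^\infty_c(W)$ with $\chi \equiv 1$ on $V'$. The localized function $\tilde u := \chi u$, extended by zero to all of $\mathbb{R}^n$, satisfies
$$
    -\Delta \tilde u \;=\; \chi g \;-\; 2\nabla \chi \cdot \nabla u \;-\; u \Delta \chi \;=:\; h
$$
and agrees with $u$ on $V'$, so it suffices to bound $\tilde u$ in $C^{1,1}(V)$. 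The immediate obstruction is that $h$ already involves $\nabla u$, which is not controlled a priori; this is overcome by bootstrap.

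First I would run a Calder\'on--Zygmund bootstrap. The interior $W^{2,p}$ estimate, applied along a finite chain of nested cutoffs $V \Subset V_1 \Subset \cdots \Subset V_k \Subset W$, promotes $u \in L^\infty$ successively to $W^{1,p}_{loc}$ (so that the commutator term $2\nabla\chi\cdot\nabla u$ becomes $L^p$) and then to $W^{2,p}_{loc}$ for every $p<\infty$. Sobolev embedding yields $u \in C^{1,\alpha}_{loc}$ for every $\alpha<1$, with all constants depending only on $\dist_{min}(V,\partial\Omega)$, $\|u\|_{L^\infty(\Omega)}$ and $\|g\|_{L^\infty(\Omega)}$.

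The $C^{1,1}$ upgrade, which is the delicate new input, uses the Newtonian representation $\tilde u = N * h$, valid because $\tilde u$ is compactly supported. For $x\in V$ and $r>0$ small with $B_r(x)\subset V'$, split $h = h\mathbf{1}_{B_r(x)} + h\mathbf{1}_{\mathbb{R}^n\setminus B_r(x)}$. On $B_r(x)$ the commutator terms vanish (since $\chi\equiv 1$ on $V'$), so $h = g$ there and is bounded, and the near-field Hessian is controlled by pairing $g$ against $D^2 N$ after exploiting the sphere-mean cancellation of that kernel. The far-field piece is harmonic on $B_r(x)$, so its Hessian is controlled by interior derivative estimates for harmonic functions in terms of $\|\tilde u\|_{L^\infty}$. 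Optimizing in $r$ and combining the two estimates produces the pointwise bound on $D^2\tilde u$, and hence the stated $C^{1,1}$ inequality.

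The main obstacle is precisely this last step: $L^\infty$ right-hand sides sit at the borderline of the Calder\'on--Zygmund scale (one generically only has $D^2 u \in \mathrm{BMO}$ a priori, not $L^\infty$), so the $C^{1,1}$ bound does not follow from the standard $W^{2,p}$ theory plus Sobolev embedding alone. The subtle point is the estimate on the near-field piece $N*(g\mathbf{1}_{B_r(x)})$, where the cancellations of $D^2 N$ must be handled carefully to avoid a logarithmic loss in $r$; the reference \cite{regularity} organizes this via a dyadic decomposition combined with uniform harmonic estimates, which is what delivers the sharp dependence $\|u\|_{C^{1,1}(V)} \le C(1+\|u\|_{L^\infty(\Omega)}+\|g\|_{L^\infty(\Omega)})$ with $C=C(\dist_{min}(V,\partial\Omega))$.
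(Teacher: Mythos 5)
This statement is not proved in the paper; it is quoted verbatim as a cited theorem (\cite{regularity}[Theorem~2.14]), so there is no ``paper's own proof'' against which to compare your attempt. What can be assessed is whether your proposed argument would actually establish the statement as written, and here there is a genuine problem.

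As stated, the theorem is \emph{false}. The classical Calder\'on--Zygmund and Schauder theories both degenerate at the endpoint: $-\Delta u = g$ with $g\in L^\infty$ gives $u\in W^{2,p}_{loc}$ for every $p<\infty$, hence $u\in C^{1,\alpha}_{loc}$ for every $\alpha<1$, and $D^2u\in \mathrm{BMO}_{loc}$, but \emph{not} $D^2u\in L^\infty_{loc}$. The standard two-dimensional counterexample is $u(x_1,x_2)=x_1x_2\log|x|$: a direct computation gives $\Delta u = 4x_1x_2/|x|^2 \in L^\infty(B_1)$ and $u\in L^\infty(B_1)$, yet $u_{x_1x_2} = \log|x| + 1 - 2x_1^2x_2^2/|x|^4$ is unbounded near the origin, so $u\notin C^{1,1}(B_{1/2})$. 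Your proposal correctly flags that ``$L^\infty$ right-hand sides sit at the borderline of the Calder\'on--Zygmund scale'' and that ``one generically only has $D^2u\in \mathrm{BMO}$ a priori,'' but then asserts that the logarithmic loss in the near-field Newtonian-potential estimate ``must be handled carefully to avoid'' it and that the reference does so via a dyadic decomposition. That logarithmic loss cannot be removed---it is exactly the obstruction illustrated by the counterexample. No dyadic decomposition, sphere-mean cancellation of $D^2N$, or harmonic far-field comparison will produce a uniform bound on $D^2\tilde u$ when $g$ is merely bounded.

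Your bootstrap portion (localization by $\chi$, promotion of $u\in L^\infty$ to $W^{2,p}_{loc}$ for all $p<\infty$, and then $C^{1,\alpha}_{loc}$ for all $\alpha<1$, with constants depending only on $\dist_{min}(V,\partial\Omega)$, $\|u\|_{L^\infty}$, and $\|g\|_{L^\infty}$) is sound and is presumably what the cited source actually proves; the exponent in the cited statement should almost certainly read $C^{1,\alpha}$ for $\alpha<1$ (or $C^{1,1-\varepsilon}$), not $C^{1,1}$. As a side note, the paper in fact only ever invokes the other two regularity statements (the Sobolev interior estimate and the flat-boundary $D^2$ estimate) in its blowup arguments, so the $C^{1,1}$ claim is never actually used; but if it were, it would need to be corrected to the sub-Lipschitz Hölder scale before your argument could be completed.
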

    
    \begin{thm}\cite[Theorem 2.17]{regularity}
    \label{boundaryReg}
        Let $B_1^+ = \{(x,y)\in B_1(0):y\geq0 \}$ and $u \in L^\infty(B_1^+)$ such that:
        $$
            -\Delta u(x)
            =
            g(x)
        $$
        for $x \in B_1^+$. Assume also that $u|_{\{y=0\}} = 0$ in the Sobolev trace sense. Then:
        $$
            ||D^2u||_{L^\infty(B^+_{\frac{1}{4}})}
        \leq
            C
            (1 + ||u||_{L^\infty(B_1^+)} + ||g||_{L^\infty(B_1^+)}).
        $$
    \end{thm}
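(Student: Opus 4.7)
The plan is to reduce this boundary $C^{1,1}$ estimate to the interior $C^{1,1}$ bound of the previous theorem by means of odd reflection across the flat piece $\{y=0\}$ of $\partial B_1^+$. The hypothesis that $u$ has vanishing Sobolev trace on $\{y=0\}$ is exactly what is needed to glue the odd extension into an $H^1$ solution of a Poisson equation on the full ball $B_1(0)$, at which point the interior estimate applies directly.

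\textbf{Setup.} I define $\tilde u(x,y) := u(x,y)$ for $y \geq 0$ and $\tilde u(x,y) := -u(x,-y)$ for $y<0$, and let $\tilde g$ denote the analogous odd reflection of $g$. The vanishing trace of $u$ on $\{y=0\}$ gives $\tilde u \in H^1(B_1(0))$ by the standard characterisation of $H^1_0$-functions via reflection (approximate $u$ in $H^1(B_1^+)$ by smooth functions vanishing in a neighbourhood of $\{y=0\}$, reflect each of them, and pass to the limit). Moreover, $\|\tilde u\|_{L^\infty(B_1)} = \|u\|_{L^\infty(B_1^+)}$ and $\|\tilde g\|_{L^\infty(B_1)} = \|g\|_{L^\infty(B_1^+)}$.

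\textbf{Weak equation on the full ball.} The crux is to verify that $-\Delta \tilde u = \tilde g$ weakly on $B_1(0)$. Given a test function $\varphi \in C_c^\infty(B_1(0))$, split $\varphi = \varphi^e + \varphi^o$ into its even and odd parts under $y \mapsto -y$. By the odd symmetry of $\tilde u$ and $\tilde g$, the change of variables $y \mapsto -y$ on the lower half shows that $\int_{B_1} \nabla \tilde u \cdot \nabla \varphi^e$ and $\int_{B_1} \tilde g\,\varphi^e$ both vanish. For the odd part, $\varphi^o$ vanishes on $\{y=0\}$ and has compact support in $B_1$, so $\varphi^o|_{B_1^+}$ is an admissible test function for the weak equation $-\Delta u = g$ on $B_1^+$ with Dirichlet condition on $\{y=0\}$; the same change of variables reduces $\int_{B_1} \bigl(\nabla \tilde u \cdot \nabla \varphi^o - \tilde g\,\varphi^o\bigr)$ to twice the analogous integral over $B_1^+$, which vanishes by hypothesis.

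\textbf{Conclusion and main obstacle.} Applying the preceding interior $C^{1,1}$ estimate to $\tilde u$ with $V := B_{1/4}(0) \Subset B_1(0)$ yields
\[
\|\tilde u\|_{C^{1,1}(B_{1/4}(0))} \leq C\bigl(1 + \|u\|_{L^\infty(B_1^+)} + \|g\|_{L^\infty(B_1^+)}\bigr),
\]
and restricting to $B_{1/4}^+$ gives the stated bound on $\|D^{2} u\|_{L^\infty(B_{1/4}^+)}$. The only genuinely delicate point is the weak-equation check across the reflection line: the even/odd decomposition together with the vanishing trace of $u$ handles it, but some care is needed in justifying that $\varphi^o|_{B_1^+}$ is an admissible test function, i.e.\ that it can be approximated in $H^1$ by elements of $C_c^\infty(B_1^+)$. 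I expect this to be the main --- but still routine --- technical step, with no substantially new idea required beyond the classical reflection argument.
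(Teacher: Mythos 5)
The paper does not prove Theorem \ref{boundaryReg}; it imports it verbatim from \cite[Theorem 2.17]{regularity}, so there is no in-paper argument to compare yours against. Your odd-reflection proof is the standard way to deduce a flat-boundary $C^{1,1}$ bound from the interior one, and it is correct. The symmetry bookkeeping is right: with $\tilde u$ and $\tilde g$ odd in $y$, the fields $\tilde u$, $\partial_x \tilde u$, $\tilde g$ are odd while $\partial_y \tilde u$ is even, so the bilinear form pairs trivially against $\varphi^e$ and reduces to twice the upper-half integral against $\varphi^o$. The point you single out as delicate --- that $\varphi^o|_{B_1^+}$ is an admissible test function --- is indeed routine: $\varphi^o$ is smooth with compact support in $B_1$ and vanishes on $\{y=0\}$, hence $|\varphi^o(x,y)|\le C y$ near the flat face; multiplying by a cutoff $\eta_\epsilon(y)$ with $\eta_\epsilon=0$ for $y\le\epsilon$, $\eta_\epsilon=1$ for $y\ge 2\epsilon$, $|\eta_\epsilon'|\le C/\epsilon$, gives functions in $C_c^\infty(B_1^+)$ converging to $\varphi^o$ in $H^1(B_1^+)$, since the error is supported on a strip of measure $O(\epsilon)$ where the extra gradient term $\eta_\epsilon' \varphi^o$ stays bounded. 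With the weak equation established on all of $B_1$, the interior estimate applied to $\tilde u$ on $B_{1/4}\Subset B_1$ gives the claim, and the $L^\infty$ norms of $\tilde u$, $\tilde g$ on $B_1$ equal those of $u$, $g$ on $B_1^+$ as you say. One could slightly streamline the weak-equation check by testing directly against $\varphi$ rather than decomposing: the integral over $B_1$ equals the integral over $B_1^+$ of $\nabla u\cdot\nabla(\varphi(x,y)-\varphi(x,-y)) - g(\varphi(x,y)-\varphi(x,-y))$ after the change of variables, and $\varphi(x,y)-\varphi(x,-y)=2\varphi^o$ vanishes on $\{y=0\}$, which is the same observation without naming the even part. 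Either way the proof is sound.
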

    These other two theorems will be used for analysing the blowup sequence \eqref{blowupIntro} and its limit both close to $\partial B_1(0)$ and in the interior of $K$ (given by \eqref{upHalfPlaneWithoutBall}).

    \subsection{Other results}

    Using properties of how the complementary Hausdorff distance relates to the continuity of eigenvalues (see for instance \cite[Sections 4.6, 4.7]{bucur}), we know that the function:
    $$
        (\delta, p) \in ]0, \epsilon[ \times \Omega \mapsto 
        \lambda_{\delta,p} := \lambda_1(\Omega - B_\delta(p))
    $$
    is continuous. With this we can prove the first lemma.
    \begin{lemma}
        \label{boundOnEigenValues}
        There exists $\epsilon>0$, small enough and $C>0$, such that for all $\delta < \epsilon$ and $p \in \Omega$ satisfying $B_\delta(p) \subset \Omega$, then $\lambda_{\delta,p} = \lambda_1(\Omega-B_\delta(p)) < C$.
    \end{lemma}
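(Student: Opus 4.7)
My plan is to exhibit, for each admissible pair $(\delta, p)$, an explicit test function in $H^1_0(\Omega - B_\delta(p))$ whose Rayleigh quotient is bounded independently of $p$ and of $\delta$ (for $\delta$ small). The standard two-dimensional logarithmic cutoff is tailor-made for this, since its Dirichlet energy depends only on the width of an annulus and not on the ambient location; this is exactly what makes the bound uniform in $p$.

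\textbf{Construction.} Fix, once and for all, a nontrivial $\phi \in C_c^\infty(\Omega)$ with $\phi \geq 0$. For $0 < \delta < 1$ define the radial cutoff
\[
    \chi_\delta(r) :=
    \begin{cases}
        0, & r \leq \delta, \\
        \dfrac{2 \log(r/\delta)}{|\log \delta|}, & \delta \leq r \leq \sqrt{\delta}, \\
        1, & r \geq \sqrt{\delta},
    \end{cases}
\]
and set $\varphi_{\delta,p}(x) := \phi(x)\,\chi_\delta(|x-p|)$. Since $\phi$ has compact support in $\Omega$ and $\chi_\delta$ vanishes on $B_\delta(p)$, we have $\varphi_{\delta,p} \in H^1_0(\Omega - B_\delta(p))$ as soon as $\delta$ is small enough that $B_\delta(p) \subset \Omega$. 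A direct computation in the annulus $B_{\sqrt{\delta}}(p) \setminus B_\delta(p)$ gives
\[
    \int_{\mathbb R^2} |\nabla \chi_\delta(|{\cdot}-p|)|^2 \, dx = \frac{4\pi}{|\log \delta|},
\]
which tends to $0$ as $\delta \to 0$, and this estimate is independent of $p$. Combining with $|\nabla \varphi_{\delta,p}|^2 \leq 2\chi_\delta^2|\nabla\phi|^2 + 2\phi^2|\nabla\chi_\delta|^2$ yields
\[
    \int |\nabla \varphi_{\delta,p}|^2 \leq 2\int |\nabla \phi|^2 + \frac{8\pi \|\phi\|_\infty^2}{|\log \delta|}.
\]
On the other hand, $\int \varphi_{\delta,p}^2 \geq \int \phi^2 - \pi\|\phi\|_\infty^2 \delta$, which converges to $\int \phi^2 > 0$ uniformly in $p$.

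\textbf{Conclusion.} By the variational characterization of $\lambda_1$, for every admissible $(\delta, p)$,
\[
    \lambda_{\delta,p} \;\leq\; \frac{\int |\nabla \varphi_{\delta,p}|^2}{\int \varphi_{\delta,p}^2}.
\]
The right-hand side converges to $2\int |\nabla \phi|^2 / \int \phi^2$ as $\delta \to 0$, uniformly in $p$; in particular there exist $\epsilon > 0$ and $C > 0$ such that the quotient is bounded by $C$ whenever $\delta < \epsilon$ and $B_\delta(p) \subset \Omega$, which is the stated conclusion.

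\textbf{Obstacle.} There is no real analytic difficulty; the only thing to watch is the uniformity in $p$, which is handled automatically because the logarithmic cutoff is defined radially around $p$ and its energy is translation-invariant. As an alternative route one could instead use the continuity of $(\delta,p)\mapsto \lambda_{\delta,p}$ recalled before the lemma together with a compactness argument on the closure of the parameter set, extended by $\lambda_{0,p} = \lambda_1(\Omega)$ (a point has zero capacity in $\mathbb R^2$); but the direct test-function argument above is more self-contained and does not require invoking $\gamma$- or Šverák-type convergence theorems.
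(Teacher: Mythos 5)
Your proof is correct, and it takes a genuinely different route from the paper. The paper's (commented-out) argument invokes the continuity of $(\delta,p)\mapsto\lambda_1(\Omega-B_\delta(p))$ with respect to complementary Hausdorff convergence, restricts to a compact set of admissible centres for the fixed radius $\epsilon$, and then reduces the general case $\delta<\epsilon$ to the $\epsilon$-ball case by domain monotonicity of $\lambda_1$: any $B_\delta(p)\subset\Omega$ sits inside some $B_\epsilon(\bar x)\subset\Omega$, so $\lambda_1(\Omega-B_\delta(p))\le\lambda_1(\Omega-B_\epsilon(\bar x))$. Your construction instead gets the bound by hand from the Rayleigh quotient of the logarithmic cutoff $\phi\,\chi_\delta(|\cdot-p|)$, whose Dirichlet energy contribution $4\pi/|\log\delta|$ is translation-invariant and vanishes as $\delta\to0$, so the uniformity in $p$ is automatic. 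What the paper's route buys is brevity given that the continuity statement is already cited in the preceding paragraph (and is needed again in Lemma~\ref{C1Closeness}); what your route buys is self-containedness — no appeal to $\gamma$-convergence or \v{S}ver\'ak-type theorems — and it incidentally quantifies how $\lambda_{\delta,p}\to\lambda_1(\Omega)$ as $\delta\to 0$, reflecting the fact that a point has zero $H^1$-capacity in the plane. All the estimates check out: $\chi_\delta$ is continuous across $r=\sqrt\delta$, the energy integral $\int_\delta^{\sqrt\delta}\frac{4}{r^2|\log\delta|^2}\,2\pi r\,dr=4\pi/|\log\delta|$ is correct, the $L^2$ lower bound uses $|B_{\sqrt\delta}|=\pi\delta$, and $\varphi_{\delta,p}$ is Lipschitz with compact support in $\Omega$ and vanishes on $\overline{B_\delta(p)}$, hence lies in $H^1_0(\Omega-B_\delta(p))$.
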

    \begin{lemma}
    \label{ineqEigenFunc}
        If we extend $u_{\delta,p}$ by zero to $\mathbb{R}^2-\Omega$, then:
        $$
            -\Delta u_{\delta, p}
        \leq
            \lambda_{\delta,p}
            u_{\delta, p}
            \text{ in }
            \mathbb{R}^2,
        $$
        where we extend $u_{\delta,p}$ by zero to $\mathbb{R}^2-\Omega$.
    \end{lemma}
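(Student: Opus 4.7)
The plan is to interpret the inequality $-\Delta u_{\delta,p} \leq \lambda_{\delta,p} u_{\delta,p}$ on $\mathbb{R}^2$ as a distributional inequality, i.e. to show that
$$
\int_{\mathbb{R}^2} \nabla u_{\delta,p} \cdot \nabla \varphi \, dx
\leq
\lambda_{\delta,p}
\int_{\mathbb{R}^2} u_{\delta,p} \, \varphi \, dx
$$
for every non-negative $\varphi \in C_c^\infty(\mathbb{R}^2)$. Since $u_{\delta,p} \in H^1_0(\Omega - B_\delta(p))$, its zero extension lies in $H^1(\mathbb{R}^2)$ and both integrals make sense. The obstruction is that $\varphi$ restricted to $\Omega - B_\delta(p)$ does not vanish on $\partial(\Omega - B_\delta(p))$, so $\varphi$ itself cannot be used directly as a test function in the weak eigenvalue identity.

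To bypass this, I would introduce a family of cut-offs $\eta_\epsilon : \mathbb{R}^2 \to [0,1]$ that equal $0$ in an $\epsilon$-tubular neighborhood of $\partial(\Omega - B_\delta(p))$ and $1$ outside a $2\epsilon$-neighborhood, so that $\nabla \eta_\epsilon$ is supported in the thin shell and points into $\Omega - B_\delta(p)$. Then $\eta_\epsilon \varphi \in H^1_0(\Omega - B_\delta(p))$ is admissible and the weak eigenvalue equation for $u_{\delta,p}$ gives
$$
\int \eta_\epsilon\, \nabla u_{\delta,p} \cdot \nabla \varphi \, dx
+
\int \varphi\, \nabla u_{\delta,p} \cdot \nabla \eta_\epsilon \, dx
=
\lambda_{\delta,p} \int u_{\delta,p}\, \eta_\epsilon \varphi \, dx.
$$
The crucial observation is that the second integral on the left is non-negative: on the $\epsilon$-shell, $\nabla \eta_\epsilon$ points inward (as $\eta_\epsilon$ grows from $0$ to $1$ in that direction) and $\nabla u_{\delta,p}$ has a non-negative component in the same inward normal direction, because $u_{\delta,p} \geq 0$ vanishes on $\partial(\Omega - B_\delta(p))$. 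Discarding this term and sending $\epsilon \to 0$ via dominated convergence, using $\eta_\epsilon \to 1$ pointwise on $\Omega - B_\delta(p)$ with $0 \leq \eta_\epsilon \leq 1$, yields the desired distributional inequality.

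The step I expect to require the most care is the sign of $\nabla u_{\delta,p} \cdot \nabla \eta_\epsilon$ pointwise on the shell. Where $\partial(\Omega - B_\delta(p))$ is smooth---which by the $C^2$ hypothesis on $\partial \Omega$ is everywhere except possibly at an isolated contact point of $\partial B_\delta(p)$ with $\partial \Omega$---elliptic regularity (Theorem \ref{boundaryReg}) gives $u_{\delta,p} \in C^1$ up to the boundary, so one can parametrise the shell by the signed distance function and the inward-pointing claim becomes transparent. Any isolated corner of $\partial(\Omega - B_\delta(p))$ contributes a measure-zero set on which an additional small cut-off removes the problematic region without affecting the limit.
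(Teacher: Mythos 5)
The paper states Lemma~\ref{ineqEigenFunc} without proof, so there is no argument of the author's to compare against. Your proof proposal is correct in outline and reaches the right conclusion, but the sign claim at its core can be made both cleaner and shorter. Writing $D := \Omega - B_\delta(p)$, the statement is precisely that for every $0 \leq \varphi \in C^\infty_c(\mathbb R^2)$ one has $\int_{\mathbb R^2}\nabla u_{\delta,p}\cdot\nabla\varphi \leq \lambda_{\delta,p}\int_{\mathbb R^2}u_{\delta,p}\varphi$; since the zero extension is supported in $\overline D$, one can integrate by parts directly over $D$ to get
\[
\int_{D}\nabla u_{\delta,p}\cdot\nabla\varphi
= \lambda_{\delta,p}\int_{D}u_{\delta,p}\varphi
+ \int_{\partial D}\varphi\,\frac{\partial u_{\delta,p}}{\partial n}\,d\mathcal H^1,
\]
with $n$ the outward normal to $D$. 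Because $u_{\delta,p}\geq 0$ in $D$ and vanishes on $\partial D$, the outward normal derivative satisfies $\partial u_{\delta,p}/\partial n\leq 0$ pointwise $\mathcal H^1$-a.e., and since $\varphi\geq 0$ the boundary term is $\leq 0$, giving the result. This is the same idea that drives your $\eta_\epsilon$-cutoff argument, but it avoids the step you flagged as delicate: the claim that $\nabla u_{\delta,p}\cdot\nabla\eta_\epsilon\geq 0$ \emph{pointwise on the whole $\epsilon$-shell} is not automatic from $u\geq 0$ vanishing on $\partial D$ alone (that only controls the sign \emph{at} the boundary); you have to invoke H\"opf's lemma together with $C^1$-up-to-the-boundary regularity and continuity to propagate the sign into the shell, as you half-do. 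Alternatively, one can avoid pointwise sign control entirely by passing to the limit in the cross term and observing that it converges to $\int_{\partial D}\varphi\,(-\partial u_{\delta,p}/\partial n)\,d\mathcal H^1\geq 0$, so only a liminf estimate is needed. Your handling of the possible tangency point of $\partial B_\delta(p)$ with $\partial\Omega$ as an isolated, measure-zero singularity is correct and suffices. Either way the lemma holds; the direct integration-by-parts route is simply the shortest path.
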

    \begin{lemma}
    \label{unifBoundLemma}
        There exists $C>0$ such that for $\delta < \epsilon$ and $p$ such that $B_\delta(p) \subset \Omega$, then:
        $$
            ||u_{\delta, p}||_{L^\infty(\Omega)}
        \leq
            C.
        $$
    \end{lemma}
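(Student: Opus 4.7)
The plan is to deduce the uniform $L^\infty$ bound from Lemmas \ref{boundOnEigenValues} and \ref{ineqEigenFunc} via a standard local boundedness estimate for nonnegative weak subsolutions of a linear elliptic equation with bounded zero-order term.

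First, I would combine the two preceding lemmas to observe that the zero-extension of $u_{\delta,p}$ to $\mathbb{R}^2$ (still denoted $u_{\delta,p}$) is a nonnegative function in $H^1(\mathbb{R}^2)$ satisfying, weakly on all of $\mathbb{R}^2$,
$$
-\Delta u_{\delta,p} \leq C_0 \, u_{\delta,p},
$$
where $C_0 > 0$ is a constant independent of $\delta$ and $p$, obtained as the uniform upper bound for $\lambda_{\delta,p}$ provided by Lemma \ref{boundOnEigenValues}. Thus, uniformly in $(\delta, p)$, the function $u_{\delta,p}$ is a nonnegative weak subsolution of a fixed linear equation on the fixed ambient space $\mathbb{R}^2$.

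Next, I would fix a radius $R > 0$ with $\overline{\Omega} \subset B_R(0)$, and apply the classical Moser local $L^\infty$ estimate for weak subsolutions on the pair of concentric balls $B_R(0) \Subset B_{2R}(0)$. This estimate produces a constant $C_1 = C_1(R, C_0)$, independent of $(\delta, p)$, such that
$$
||u_{\delta,p}||_{L^\infty(B_R(0))} \leq C_1 \, ||u_{\delta,p}||_{L^2(B_{2R}(0))}.
$$
Using the normalization $||u_{\delta,p}||_{L^2(\Omega)} = 1$ together with the fact that the extension vanishes outside $\Omega$, the right-hand side equals $C_1$, and since $\Omega \subset B_R(0)$ the desired uniform estimate $||u_{\delta,p}||_{L^\infty(\Omega)} \leq C_1$ follows.

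There is no real obstacle here: the crucial point is that extending by zero transports the eigenvalue equation from the moving domains $\Omega - B_\delta(p)$ to a fixed ambient space, and the constant in Moser's estimate depends only on $R$ and on $C_0$, data that are fixed once Lemma \ref{boundOnEigenValues} is in hand. The only thing one has to be slightly careful about is that Lemma \ref{ineqEigenFunc} really furnishes an inequality valid across $\partial \Omega$ in the weak sense (which is already the content of its statement), so that Moser iteration can be run on the ball $B_{2R}(0)$ strictly containing $\overline{\Omega}$ rather than only on $\Omega - B_\delta(p)$ itself.
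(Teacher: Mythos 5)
Your proposal is correct and follows essentially the same route as the paper: extend by zero (this is exactly the content of Lemma \ref{ineqEigenFunc}), use the uniform eigenvalue bound from Lemma \ref{boundOnEigenValues} to obtain a subsolution inequality $-\Delta u_{\delta,p}\le C_0\,u_{\delta,p}$ on a fixed ambient domain together with the normalization $\|u_{\delta,p}\|_{L^2}=1$, and run a standard iteration to pass to $L^\infty$. The paper phrases the last step as a ``Brezis--Kato argument'' while you invoke the Moser local boundedness estimate for nonnegative subsolutions; these are the same bootstrap scheme in this linear, bounded-coefficient setting, so the proofs coincide in substance.
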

    \begin{proof}
        From Lemma \ref{boundOnEigenValues} we concude that $u_{\delta,p}$ is uniformly bounded in $H^1_0(\Omega)$. Also that there exists $C>0$ such that:
        $$
            -\Delta u_{\delta,p}
        \leq
            Cu_{\delta,p}.
        $$
        By use of a Brezis-Krato argument, we derive uniform $L^\infty(\Omega)$ bounds.
    \end{proof}
    We will call the family of eigenfunctions:
    \begin{equation}
        \mathcal{F}_\delta
    =
        \{
        u_{\delta,p}:
            B_{\delta}(p) \subset \Omega,
            u_{\delta,p} \text{ first eigenfunction of }
            \Omega - B_\delta(p)
        \}
    \end{equation}
    
    Now we will estimate the derivatives of the eigenfunctions $u_{\delta,p}$ close to the boundary. For this we will use a comparison principle, and compare with the solution $v_\Omega \in H^1_0(\Omega)$ of the problem:
    $$
        -\Delta v_\Omega
    =
        1.
    $$
    This solution satisfies $v_\Omega \geq 0$ in $\Omega$. Also this solution is regular, in particular it is Lipschitz continuous in $\overline{\Omega}$, thus there exists $C>0$ such that for all $x,y \in \overline{\Omega}$ we have:
    $$
        |v_\Omega(x) - v_\Omega(y)|
    \leq
        L|x-y|.
    $$
    \begin{lemma}
        \label{LipBound}
        There exists $C>0$ such that for all $\delta < \epsilon$ and $u_{\delta, p} \in \mathcal{F}_\delta$, $x \in \Omega$ and $y \in \partial \Omega$ then:
        $$
            |u_{\delta,p}(x) - u_{\delta, p}(y)|
            =
            |u_{\delta,p}(x)|
            \leq
            C|y-x|.
        $$
        In particular for all $x \in \Omega$:
        $$
            |u_{\delta, p}(x)|
        \leq
            C\dist(x,\partial \Omega).
        $$
    \end{lemma}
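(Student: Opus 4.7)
The plan is to obtain the estimate via a barrier argument, using the torsion-like function $v_\Omega$ as a comparison function. First I would observe that Lemma \ref{boundOnEigenValues} and Lemma \ref{unifBoundLemma} together provide constants $C_1, C_2>0$, independent of $\delta$ and $p$, such that $\lambda_{\delta,p}\leq C_1$ and $\|u_{\delta,p}\|_{L^\infty(\Omega)}\leq C_2$. Hence, setting $M := C_1 C_2$,
$$
    -\Delta u_{\delta,p} = \lambda_{\delta,p} u_{\delta,p} \leq M
    \quad \text{in } \Omega - B_\delta(p),
$$
with $M$ independent of the choice of $\delta$ and $p$.

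Next I would compare $u_{\delta,p}$ with $w := M\, v_\Omega$, where $v_\Omega$ is the solution of $-\Delta v_\Omega = 1$ in $\Omega$ with $v_\Omega = 0$ on $\partial\Omega$. Since $v_\Omega\geq 0$ in $\Omega$ and $u_{\delta,p}=0$ on $\partial(\Omega - B_\delta(p))$, we have $w - u_{\delta,p}\geq 0$ on $\partial(\Omega - B_\delta(p))$. Moreover, in $\Omega - B_\delta(p)$,
$$
    -\Delta(w - u_{\delta,p}) = M - \lambda_{\delta,p} u_{\delta,p} \geq 0.
$$
The weak maximum principle for $H^1_0$ solutions then gives $u_{\delta,p}\leq M v_\Omega$ throughout $\Omega - B_\delta(p)$, and the inequality holds trivially in $\overline{B_\delta(p)}$ where $u_{\delta,p}$ vanishes.

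Finally, using the Lipschitz bound $|v_\Omega(x) - v_\Omega(y)|\leq L|x-y|$ valid on $\overline{\Omega}$, for any $x\in\Omega$ and $y\in\partial\Omega$,
$$
    |u_{\delta,p}(x)| = u_{\delta,p}(x) \leq M v_\Omega(x) = M\,|v_\Omega(x)-v_\Omega(y)| \leq ML\,|x-y|,
$$
which gives the first estimate with $C := ML$. Taking $y$ to be a point of $\partial\Omega$ realizing $\dist(x,\partial\Omega)$ yields the second inequality. There is no genuine obstacle here: the only thing to be a bit careful about is that the comparison is carried out in the (nonsmooth but Lipschitz) domain $\Omega - B_\delta(p)$, where the weak maximum principle applies because both functions lie in $H^1$ and vanish on the boundary in the trace sense.
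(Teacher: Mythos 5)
Your proof is correct and follows essentially the same route as the paper: use Lemmas \ref{boundOnEigenValues} and \ref{unifBoundLemma} to get a uniform bound on $-\Delta u_{\delta,p}$, compare against a constant multiple of the torsion function $v_\Omega$ via the maximum principle on $\Omega - B_\delta(p)$, and invoke the global Lipschitz continuity of $v_\Omega$. The only differences are cosmetic (naming the constants separately and making the boundary comparison on $\partial(\Omega - B_\delta(p))$ explicit, which is slightly cleaner than the paper's phrasing).
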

    \begin{proof}
        By Lemma \ref{boundOnEigenValues} and \ref{unifBoundLemma} there exists $C$ such that for all $u_{\delta,p} \in \mathcal{F}_\delta$:
        $$
            -\Delta u_{\delta,p} \leq C.
        $$
        Thus $-\Delta u_{\delta, p } \leq C = -C\Delta v_\Omega$ in $\Omega - B_\delta(p)$. Since $Cv_\Omega|_{\Omega - B_\delta(p)}\geq 0 = u_{\delta, p }|_{\Omega - B_\delta(p)}$, by the maximum principle we conclude that:
        $$
            u_{\delta,p}
        \leq
            Cv_\Omega.
        $$
        From this, if $y \in \partial \Omega$ and $x \in \Omega$ then:
        $$
            |u_{\delta,p}(x)|
            \leq
            C|v_\Omega(x)|
            =
            C|v_\Omega(x)-v_\Omega(y)|
            \leq
            CL|x-y|.
        $$
    \end{proof}
    We remind of the definition of $V_d$ given by \eqref{DefOfTubNeighbour}. We will use the above Theorem \ref{ClassicReg} to conclude the following lemma:
    \begin{lemma}
        \label{C1Closeness}
        For every $\epsilon>0$ and $ d>0$, there exists $\tilde{\rho}>0$ such that, if $B_\delta(p)\subset V_{\tilde{\rho}}$ then:
        $$
            ||u_{\delta, p} - u_{\Omega}||_{C^1(\Omega - V_d)}
        \leq
            {\epsilon}.
        $$
    \end{lemma}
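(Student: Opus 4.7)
The plan is a contradiction argument combined with an interior elliptic bootstrap. Suppose the lemma fails: there exist $\epsilon_0,d_0>0$ and balls $B_{\delta_n}(p_n)\subset V_{1/n}$ with $B_{\delta_n}(p_n)\subset\Omega$ and
\[
\|u_{\delta_n,p_n}-u_\Omega\|_{C^1(\Omega-V_{d_0})}>\epsilon_0.
\]
Since $\dist(\cdot,\partial\Omega)$ is $1$-Lipschitz, the inclusion $B_{\delta_n}(p_n)\subset V_{1/n}$ forces $\dist(p_n,\partial\Omega)+\delta_n<1/n$, so in particular $\delta_n\to 0$. Write $u_n:=u_{\delta_n,p_n}$ and $\lambda_n:=\lambda_{\delta_n,p_n}$; by Lemmas \ref{boundOnEigenValues} and \ref{unifBoundLemma}, $\{u_n\}$ is uniformly bounded in $H^1_0(\Omega)\cap L^\infty(\Omega)$ and $\{\lambda_n\}$ is bounded.

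The first step is to establish $u_n\to u_\Omega$ strongly in $L^2(\Omega)$ and $\lambda_n\to\lambda_1(\Omega)$. Up to a subsequence (Rellich--Kondrachov) one has $u_n\rightharpoonup u^\star$ in $H^1_0(\Omega)$ and $u_n\to u^\star$ in $L^2(\Omega)$, with $\|u^\star\|_{L^2}=1$ and $u^\star\geq 0$, together with $\lambda_n\to\lambda^\star$. For any $\varphi\in C_c^\infty(\Omega)$, the support of $\varphi$ lies in $\Omega-V_{1/n}$ for $n$ large, hence avoids $B_{\delta_n}(p_n)\subset V_{1/n}$, so $\varphi$ is an admissible test function in the weak formulation of the eigenvalue problem for $\Omega-B_{\delta_n}(p_n)$; passing to the limit yields $-\Delta u^\star=\lambda^\star u^\star$ in $\Omega$ in the distributional sense. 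Since $u^\star$ is a non-trivial non-negative eigenfunction, simplicity forces $u^\star=u_\Omega$ and $\lambda^\star=\lambda_1(\Omega)$, and uniqueness of the limit promotes the convergence to the full sequence. (Alternatively this step is a direct consequence of the continuity of $\lambda_1$ under complementary Hausdorff convergence quoted in the preliminaries.)

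For the second step, fix open sets $\Omega-V_{d_0}\Subset W_2\Subset W_1\Subset W_0\Subset\Omega-V_{d_0/2}$. For $n$ large enough, $\overline{B_{\delta_n}(p_n)}\cap W_0=\emptyset$ because $B_{\delta_n}(p_n)\subset V_{1/n}$ while $W_0\subset\Omega-V_{d_0/2}$. On $W_0$ the difference satisfies
\[
-\Delta(u_n-u_\Omega)=\lambda_n u_n-\lambda_1(\Omega)u_\Omega,
\]
and the right-hand side tends to $0$ in $L^2(W_0)$ by the first step and the uniform $L^\infty$ bound. Applying Theorem \ref{ClassicReg} with $m=2$ upgrades this to $u_n\to u_\Omega$ in $H^2(W_1)$; consequently the right-hand side now tends to $0$ in $H^1(W_1)$, and a second application with $m=3$ gives $u_n\to u_\Omega$ in $H^3(W_2)$, hence in $H^3(\Omega-V_{d_0})$. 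The Sobolev embedding $H^3\hookrightarrow C^1$ in dimension two (valid because $3-\tfrac{2}{2}>1$) then contradicts $\|u_n-u_\Omega\|_{C^1(\Omega-V_{d_0})}>\epsilon_0$.

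The main obstacle is the first step, namely the $L^2$ convergence of the eigenfunctions and the convergence of the eigenvalues: this is where the hypothesis $B_{\delta_n}(p_n)\subset V_{1/n}$ is used essentially, via $\delta_n\to 0$ and the fact that each interior test function eventually ignores the hole. Once this convergence is secured, the promotion from $L^2$ to $C^1$ on a fixed interior region $\Omega-V_{d_0}$ is a routine two-step application of interior elliptic regularity on a nested family of open sets.
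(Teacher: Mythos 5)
Your proof is correct and follows essentially the same route as the paper: an interior elliptic bootstrap via Theorem \ref{ClassicReg} upgrading $L^2$ closeness to $H^m$ and then to $C^1$ by Sobolev embedding, with convergence of the eigenvalues and eigenfunctions as the key input. The only real difference in presentation is that you frame the argument by contradiction and supply a self-contained weak-compactness proof that $u_n\to u_\Omega$ in $L^2$ and $\lambda_n\to\lambda_1(\Omega)$ (Rellich--Kondrachov, passing to the limit in the weak formulation against test functions eventually supported away from the shrinking hole, then simplicity of $\lambda_1$), whereas the paper simply quotes continuity under complementary Hausdorff convergence from \cite{bucur}; both serve the same purpose. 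One small imprecision: the inequality $\dist(p_n,\partial\Omega)+\delta_n<1/n$ does not follow from $1$-Lipschitzness of the distance function as you assert, but the weaker and correct bound $\delta_n\le\dist(p_n,\partial\Omega)<1/n$ (from $B_{\delta_n}(p_n)\subset\Omega$ and $p_n\in V_{1/n}$) already gives $\delta_n\to0$, so nothing is lost.
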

    \begin{proof}
        Given $d>0$, we suppose that $B_\delta(p) \subset V_{\frac{d}{2}}$. This implies that $u_{\delta,p}$ satisfies the equation:
        $$
        -\Delta
        u_{\delta, p}
        =
        \lambda_{\delta,p}u_{\delta,p}
        \quad
        \text{ in } 
        \Omega - V_{\frac{d}{2}}.
        $$
        Using Theorem \ref{ClassicReg} inductively, for each $m$, there exists $C$ depending on $d$ and $m$, such that:
        $$
            ||u_{\delta,p}||_{H^m(\Omega - V_{d})}
        \leq
            C(
            \lambda_{\delta,p} ||u_{\delta,p}||_{H^{m-2}(\Omega - V_{\frac{d}{2}})}
        +
            ||u_{\delta,p}||_{L^2}
            ).
        $$
        By Lemma \ref{boundOnEigenValues}, there exists $\tilde{\delta}>0$ and $C>0$ such that, if $\delta < \tilde{\delta}$, then $\lambda_{\delta,p} < C$. Using this fact and induction, we conclude that there exists $C>0$ depending on $d$ and $m$ such that:
        \begin{equation}
        \label{SobolevBoundOnFunc}
            ||u_{\delta,p}||_{H^m(\Omega - V_d)}
        \leq
            C.
        \end{equation}
        Again using Theorem \ref{ClassicReg}, since:
        $$
        -\Delta(
        u_{\delta, p}
        -
        u_\Omega)
        =
        \lambda_{\delta,p}u_{\delta,p}
        -
        \lambda_1(\Omega)u_\Omega
        \quad
        \text{ in } 
        \Omega - V_{\frac{d}{2}},
        $$
        there exists $C$ depending on $d$ and $m$, such that:
        \begin{align*}
            ||u_{\delta, p}
        -
            u_\Omega
            ||_{H^m(\Omega-V_d)}
        &\leq
            C(
            ||
            \lambda_{\delta,p}u_{\delta,p}
        -
            \lambda_1(\Omega)u_\Omega
            ||_{H^{m-2}(\Omega-V_{\frac{d}{2}})}
            +
            ||
                u_{\delta, p}
            -
                u_\Omega
            ||_{L^2(\Omega-V_{\frac{d}{2}})}
            )\\
        &\leq
        C(
        (\lambda_{\delta,p}-\lambda_1(\Omega))  ||u_{\delta,p}||_{H^{m-2}(\Omega-V_{\frac{d}{2}})}      
        +
        \lambda_1(\Omega)
        ||u_{\delta,p} - u_{\Omega}||_{H^{m-2}(\Omega-V_{\frac{d}{2}})}
        +
        ||u_{\delta,p}-u_{\Omega}||_{L^2(\Omega-V_{\frac{d}{2}})}
        ).
        \end{align*}
        We can use induction on this argument and equation \eqref{SobolevBoundOnFunc}, we can conclude that if $\delta < \tilde{\delta}$ and $B_\delta(p) \subset V_{\frac{d}{2}}$, there exists $\overline{C} = \overline{C}(d,m)$ such that:
        \begin{equation}
        \label{differenceSobolevBound}
        ||u_{\delta, p}
        -
            u_\Omega
            ||_{H^m(\Omega-V_d)}
        \leq
            \overline{C}(
                (\lambda_{\delta,p}-\lambda_1(\Omega))
                +
                ||u_{\delta,p}-u_{\Omega}||_{L^2(\Omega-V_\frac{d}{2})}
            ).
        \end{equation}

        Now we notice that by the results in \cite[Sections 4.6, 4.7]{bucur}, if 
        $$\delta_n \rightarrow 0 \text{ and } \dist(\partial \Omega, B_{\delta_n}(p_n)) \rightarrow 0,
        $$
        since the complementary Hausdorff difference:
        $$
            \mathcal{H}^c(\Omega, \Omega - B_{\delta_n}(p_n))
            \rightarrow
            0
        $$
        then:
        $$
            ||
                u_n
            -
                u_{\Omega}
            ||_{H^1(\Omega)}
            \rightarrow 0.
        $$
        Thus for all $0<\epsilon$, there exists $0<\tilde{\rho}<\min\{\frac{d}{2},\tilde{\delta}\} $ small enough, such that if:
        $$
        B_\delta(p) \subset V_{\tilde{\rho}},
        $$
        then
        $$
            ||
                u_{\delta, p}
            -
                u_{\Omega}
            ||_{H^1(\Omega)}
            <
            \frac{\epsilon}{2\overline{C}},
            \quad
            \lambda_{\delta,p}
            -
            \lambda_1(\Omega)
            <
            \frac{\epsilon}{2\overline{C}},
            \quad
            \delta < \tilde{\delta}
            .
        $$
        From equation \eqref{differenceSobolevBound} we conclude:
        $$
            ||u_{\delta, p}
        -
            u_\Omega
            ||_{H^m(\Omega - V_d)}
            \leq
            \epsilon.
        $$
        so for $m$ big enough the Sobolev embedding will imply that:
        $$
            ||u_{\delta, p}
        -
            u_\Omega||_{C^1(\Omega - V_d)}
            \leq
            \epsilon,
        $$
        so for $\epsilon$ small enough the proof follows.
    \end{proof}
    \begin{lemma}
        \label{lowerBoundOnEigenfunctions}
            There exists $\tilde{d}>0$, such that for every $0<d<\tilde{d}$, there exists $\tilde{\rho}>0$ such that, if $B_\delta(p) \subset V_{\tilde{\rho}}$, and $x \in V_{2d} - V_d$ then:
            $$
                u_{\delta, p}(x)
            \geq
                \frac{\Lambda d}{8},
            $$
            and
            $$
                \Lambda
            =
                \min_{x \in \partial \Omega}
                \left|
                    \frac
                    {\partial u_\Omega}
                    {\partial \nu}
                    (x)
                \right|
                >
                0
            $$
            where $u_\Omega \in H^1_0(\Omega)$ is the first eigenfunction of the set $\Omega$.
        \end{lemma}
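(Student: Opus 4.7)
The strategy is to first produce a quantitative lower bound of order $\Lambda d$ for $u_\Omega$ on the annular shell $V_{2d}\setminus V_d$ using only $C^1(\overline\Omega)$ regularity of $u_\Omega$ and Hopf's lemma, and then transfer this bound to $u_{\delta,p}$ by the $C^1$-closeness statement of Lemma \ref{C1Closeness}. No blow-up or compactness argument enters at this stage; it is a purely quantitative consequence of the two ingredients above.

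First I would establish $\Lambda>0$. Since $\partial\Omega$ is $C^2$, standard elliptic regularity gives $u_\Omega\in C^1(\overline\Omega)$, and the Hopf boundary point lemma applied at each $z\in\partial\Omega$ (the interior ball condition is automatic from $C^2$ regularity) produces $\partial u_\Omega/\partial\nu(z)<0$. Compactness of $\partial\Omega$ together with continuity of $\nabla u_\Omega$ then upgrades this pointwise negativity to the uniform lower bound $\Lambda=\min_{\partial\Omega}|\partial u_\Omega/\partial\nu|>0$.

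Next I would quantify the linear decay of $u_\Omega$ near $\partial\Omega$. Choose $\tilde d>0$ small enough that the closest point projection $z:V_{2\tilde d}\to\partial\Omega$ is single-valued and that the modulus of continuity $\omega$ of $\nabla u_\Omega$ on $\overline\Omega$ satisfies $\omega(2\tilde d)\le\Lambda/2$. For $x\in V_{2d}\setminus V_d$ with $0<d<\tilde d$, set $z=z(x)$ and $r=|x-z|\in[d,2d)$. Integrating $\nabla u_\Omega$ along the normal segment from $z$ to $x$ and using $u_\Omega(z)=0$ together with $x-z=-r\nu(z)$ yields
\begin{equation*}
u_\Omega(x)
=
-r\,\frac{\partial u_\Omega}{\partial\nu}(z)
+
\int_0^1\bigl[\nabla u_\Omega(z+t(x-z))-\nabla u_\Omega(z)\bigr]\cdot(x-z)\,dt
\ge
\Lambda r-r\,\omega(r)
\ge
\frac{\Lambda r}{2}
\ge
\frac{\Lambda d}{2}.
\end{equation*}

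Finally, with this $d$ fixed, I would apply Lemma \ref{C1Closeness} with $\epsilon=\Lambda d/4$ to obtain $\tilde\rho>0$ such that $B_\delta(p)\subset V_{\tilde\rho}$ forces $\|u_{\delta,p}-u_\Omega\|_{C^1(\Omega-V_d)}\le\Lambda d/4$. Since $V_{2d}\setminus V_d\subset\Omega\setminus V_d$, combining the two estimates gives
\begin{equation*}
u_{\delta,p}(x)\ge u_\Omega(x)-\frac{\Lambda d}{4}\ge\frac{\Lambda d}{2}-\frac{\Lambda d}{4}=\frac{\Lambda d}{4}\ge\frac{\Lambda d}{8},
\end{equation*}
as required. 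The only mildly delicate point is the order of quantifiers: $\tilde d$ depends only on $\Omega$ (through $\Lambda$, $\omega$, and the tubular radius), while $\tilde\rho$ is allowed to depend on $d$ — exactly what Lemma \ref{C1Closeness} supplies, so no genuine obstacle arises.
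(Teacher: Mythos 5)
Your proof is correct and follows essentially the same route as the paper: establish $\Lambda>0$ via Hopf's lemma and compactness, use the regularity of $u_\Omega$ near $\partial\Omega$ to get the linear lower bound $u_\Omega(x)\geq\Lambda\,\dist(x,\partial\Omega)/2$ on $V_{2d}$, and then transfer to $u_{\delta,p}$ via Lemma \ref{C1Closeness}. The only cosmetic difference is that you spell out the linear-growth step by integrating along the normal segment and controlling the error via the modulus of continuity of $\nabla u_\Omega$, whereas the paper simply asserts it from $C^2$ regularity of $u_\Omega$; this is the same estimate, written slightly more explicitly.
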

        \begin{proof}
            By H\"opf's Lemma, since $\Omega$ is $C^2$, for all $p \in \partial \Omega$ we have that:
            $$
            \left(
                \frac{\partial u_\Omega}
                {\partial \nu}
            \right) (p)
            >
            0.
            $$
            Since $u_\Omega \in C^1(\overline{\Omega})$, there exists $\Lambda > 0$ such that:
            $$
                \Lambda
            =
                \min_{x \in \partial \Omega}
                \left|
                    \frac
                    {\partial u_\Omega}
                    {\partial \nu}
                    (x)
                \right|
                >
                0.
            $$
            Since $u_\Omega \in C^2(\overline{\Omega})$, there exists $\tilde{d}>0$ small enough such that, for every $d<\tilde{d}$:
            $$
                u_\Omega(x)
            \geq
                \frac{\Lambda \dist(x,\partial \Omega)}{2}
            $$
            for all $x \in V_{2d}$. In particular for $x \in V_{2d}-V_d$ we have:
            \begin{equation}
            \label{lowBoundRand1}
                u_{\Omega}(x)
            \geq
                \frac{\Lambda d}{2}.
            \end{equation}
            
            Now we use Lemma \ref{C1Closeness} to conclude that there exists $\tilde{\rho}>0$, such that if $B_\delta(p) \subset V_{\tilde{\rho}}$, and $u_{\delta,p} \in H^1_0(\Omega - B_\delta(p))$ is the first eigenfunction of $\Omega - B_\delta(p)$, then:
            \begin{equation}
            \label{unifProxEq1}
                ||
                    u_{\delta, p}
                    -
                    u_\Omega
                ||_{L^\infty(\Omega - V_d)}
                \leq
                \frac{\Lambda d}{4}.
            \end{equation}
            Using equations \eqref{lowBoundRand1} and \eqref{unifProxEq1} implies that for $x\in V_{2d}-V_d$ we have:
            $$
                u_{\delta, p}(x)
                \geq
                \frac{\Lambda d}{4}
            $$
            concluding the proof.
        \end{proof}
        \begin{lemma}
            \label{inequalityLemmaForXEq}
            For all $C>0$, there exist $\tilde{\theta}>0$ and $\tilde{\delta}>0$ such that for all $\theta < \tilde \theta$, $\delta < \tilde \delta$, if $x>0$ satisfies:
            \begin{equation}
            \label{xInequalityRandEq}
                x^2 \leq C\delta^4\theta^4 + C\delta\theta^2x
                \quad
            \Rightarrow
                \quad
                x < 2C\delta \theta^2,
            \end{equation}
            \begin{equation}
            \label{xInequlityRand20}
                x^2 \leq C\delta^4 + C\delta x
            \quad
            \Rightarrow
            \quad
                x < 2C\delta.
            \end{equation}
        \end{lemma}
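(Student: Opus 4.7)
This is a purely algebraic statement about quadratic inequalities, and $\tilde\theta$ plays essentially no role: both bounds depend only on the size of $\delta$. I would prove each implication separately by contradiction, treating the quadratic $x^2 - (\text{linear}) x - (\text{constant}) \leq 0$.

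For the first implication, suppose toward contradiction that $x^2 \leq C\delta^4\theta^4 + C\delta\theta^2 x$ holds but $x \geq 2C\delta\theta^2$. Rewrite the hypothesis as $x(x - C\delta\theta^2) \leq C\delta^4\theta^4$. Under the contradiction assumption both factors on the left are at least $C\delta\theta^2$ (the first is $\geq 2C\delta\theta^2$, the second is $\geq C\delta\theta^2$), so the left-hand side is at least $2C^2\delta^2\theta^4$. This forces $2C^2\delta^2\theta^4 \leq C\delta^4\theta^4$, i.e.\ $\delta^2 \geq 2C$, which fails as soon as $\tilde\delta < \sqrt{2C}$. Hence $x < 2C\delta\theta^2$.

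For the second implication, the same argument applies verbatim with $\theta^2$ removed: assuming $x \geq 2C\delta$, we get $x(x-C\delta) \geq 2C^2\delta^2$, but the hypothesis demands this quantity to be at most $C\delta^4$, again forcing $\delta^2 \geq 2C$. So with $\tilde\delta < \sqrt{2C}$ (and any positive choice of $\tilde\theta$, e.g.\ $\tilde\theta = 1$), both conclusions hold simultaneously.

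There is no real obstacle here — this is a routine one-line contradiction argument whose only subtlety is keeping track of the exponents of $\delta$ and $\theta$ so that the factor $C\delta\theta^2$ (resp.\ $C\delta$) appears cleanly in the factorization $x(x - C\delta\theta^2)$. Alternatively, one could solve the quadratic directly via the formula $x \leq \tfrac{1}{2}\bigl(C\delta\theta^2 + \sqrt{C^2\delta^2\theta^4 + 4C\delta^4\theta^4}\bigr) = \tfrac{C\delta\theta^2}{2}\bigl(1 + \sqrt{1 + 4\delta^2/C}\bigr)$, and note that this bound tends to $C\delta\theta^2 < 2C\delta\theta^2$ as $\delta \to 0$; but the contradiction version above is shorter and yields the explicit threshold $\tilde\delta = \sqrt{2C}$ immediately.
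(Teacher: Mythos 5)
Your proof is correct and takes essentially the same approach as the paper: both argue by contradiction (equivalently, contrapositive), factor the quadratic as $x(x - C\delta\theta^2)$ (resp. $x(x-C\delta)$), bound the product from below using the assumed lower bound on $x$, and conclude that this is incompatible with the hypothesis once $\delta^2 < 2C$. Your write-up is if anything slightly cleaner, as you keep the factor of $2$ in the lower bound $2C^2\delta^2\theta^4$ and state explicitly that $\tilde\theta$ is irrelevant, which the paper glosses over.
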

        \begin{proof}
            We choose $\tilde{\delta}>0$ such that for all $\delta < \tilde \delta$ then $C\delta^2 < C^2$. If $x> 2C\delta \theta^2$ and $\delta^2 < \frac{1}{C}$, then:
            $$
                x^2-C\delta\theta^2x
            =
                x(x-C\delta\theta^2)
            >
                C^2\delta^2\theta^4
            >
                C\delta^4\theta^4
            $$
            and so
            $$
                x^2 > C\delta^4\theta^4 + C\delta\theta^2x.
            $$
            Thus if inequality \eqref{xInequalityRandEq} is satisfied, then $x < 2C\delta\theta^2$.

            On the other hand if $x > 2C\delta$ then:
            $$
                x(x-C\delta)>2C^2\delta^2.
            $$
            If $\delta^2 < 2C$ then:
            $$
                x(x-C\delta)>2C^2\delta^2 > C\delta^4,
            $$
            and so $x^2\leq C\delta^4 +C\delta x$ can not be satisfied. This proves the second part of the statement.
        \end{proof}

\section{Upper bound of bottom integral}

        This section is devoted to proving the following proposition:
        \begin{prop}
        \label{bottomThm}
            There exist $\theta_0 := \theta_0(\Omega)>0$ and $C = C(\theta_0)$ such that for all $\theta < \theta_0$ there exists $\tilde{\delta} := \tilde{\delta}(\Omega, \theta_0, \theta)>0$ such that if $\delta < \tilde{\delta}$ and $\dist(B_\delta(p), \partial \Omega) < \delta^2$ given the set:
            \begin{equation}
                \label{bottomCircle}
                C_\theta^-
            :=
                \{
                    z \in \partial B_\delta(p):
                    \langle
                        \nu_z,
                        e_y
                    \rangle
                    < 
                    -\cos(\theta)
                \},
            \end{equation}
            then:
            $$
                \left|
                    \int_{C_\theta^-}
                    \frac
                    {\partial u_{\delta, p}}
                    {\partial \nu}
                    \frac
                    {\partial u_{\delta,p}}
                    {\partial y}
                    d\mathcal{H}^1
                \right|
            \leq
                C
                \delta
                \theta^2.
            $$
        \end{prop}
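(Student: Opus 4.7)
Since $u_{\delta,p}=0$ on $\partial B_\delta(p)$, on $C_\theta^-$ the integrand equals $(\partial_\nu u_{\delta,p})^2\langle e_y,\nu\rangle$ with $|\langle e_y,\nu\rangle|\le 1$. Parametrize $C_\theta^-$ by the angle $\phi\in[-\theta,\theta]$ from the bottom of the ball, with arclength element $\delta\,d\phi$, and write $z_\phi\in C_\theta^-$ for the corresponding point; after rotating so $p-z(p)\parallel e_y$ and parametrizing $\partial\Omega$ locally as the $C^2$-graph $y=\varphi(x)$, the point $z_\phi$ lies at vertical distance $h(\phi):=d_{\delta,p}+\delta(1-\cos\phi)\le C(\delta^2+\delta\phi^2)$ above $\partial\Omega$. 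The arc $C_\theta^-$ therefore sits inside a thin slab whose top and bottom are both zero sets of $u_{\delta,p}$, and the plan is to exploit this two-sided vanishing to prove the pointwise estimate $|\partial_\nu u_{\delta,p}(z_\phi)|\le C\,h(\phi)$ and then integrate.

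\textbf{Thin-slab barrier.} I would work on $Q_{\delta,\theta,p}$ (whose top arc is $C_\theta^-$ and whose bottom is the local piece of $\partial\Omega$) and compare $u_{\delta,p}$ with the product barrier
\[
W(x,y):=\tfrac{A}{2}\bigl(y-\varphi(x)\bigr)\bigl(y_B(x)-y\bigr),\qquad y_B(x):=p_y-\sqrt{\delta^2-(x-p_x)^2},
\]
which vanishes on $\partial\Omega\cap Q_{\delta,\theta,p}$ and on $C_\theta^-$. A direct computation gives
\[
-\Delta W=A+\tfrac{A}{2}\bigl(\varphi''(y_B-y)+2\varphi' y_B'-(y-\varphi)y_B''\bigr),
\]
and the correction is $O(\theta^2)$ in $Q_{\delta,\theta,p}$ (using $|\varphi''|\le C$, $|y_B''|\le C/\delta$, $|\varphi'|,|y_B'|\le C\theta$ and $|y-\varphi|,|y_B-y|\le C\delta\theta^2$). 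Thus for $A$ larger than the uniform bound on $\lambda_{\delta,p}\|u_{\delta,p}\|_\infty$ coming from Lemmas \ref{boundOnEigenValues}--\ref{unifBoundLemma}, one has $-\Delta W\ge-\Delta u_{\delta,p}$ inside $Q_{\delta,\theta,p}$. On the two vertical sides of $Q_{\delta,\theta,p}$ at $x=p_x\pm\delta\sin\theta$ only the crude bound $u_{\delta,p}\le C h(\theta)$ from Lemma \ref{LipBound} is available, so I would absorb these sides by adding a harmonic corrector $V$ with boundary values $(u_{\delta,p}-W)_+$ on the sides and $0$ on top/bottom. Since the slab has aspect ratio $2\delta\sin\theta/h(\theta)\ge c/\theta$ (once $\delta\le\theta^2$), $V$ decays exponentially into the bulk and its normal derivative on $C_\theta^-$ away from the endpoints is negligible. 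The maximum principle then yields $\partial_\nu u_{\delta,p}(z_\phi)\le\partial_\nu(W+V)(z_\phi)\le C h(\phi)$, with the corner neighborhoods of $C_\theta^-$ of length $O(h(\theta))$ handled crudely by a rescaled boundary $C^{1,1}$ estimate from Theorem \ref{boundaryReg} giving $|\nabla u_{\delta,p}|\le C$.

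\textbf{Integration and main obstacle.} Combining the pointwise bound with the arclength element gives
\[
\Bigl|\int_{C_\theta^-}\Bigl(\frac{\partial u_{\delta,p}}{\partial\nu}\Bigr)^{2}\langle e_y,\nu\rangle\,d\mathcal{H}^1\Bigr|\le C\delta\int_{-\theta}^{\theta}(\delta^2+\delta\phi^2)^2\,d\phi\le C\bigl(\delta^5\theta+\delta^4\theta^3+\delta^3\theta^5\bigr),
\]
and each term is bounded by $C\delta\theta^2$ provided $\delta$ is chosen small enough depending on $\theta$ (concretely $\delta\le\theta^{1/4}/C$ works, in the spirit of Lemma \ref{inequalityLemmaForXEq}). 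The principal obstacle is the quantitative construction of $W$: the curvature corrections from $\varphi$ and $y_B$ must be kept $O(\theta^2)$, and the sides of $Q_{\delta,\theta,p}$ must be controlled by the thin-slab exponential-decay effect, which is the precise quantitative manifestation of the hypothesis $\dist(B_\delta(p),\partial\Omega)<\delta^2$.
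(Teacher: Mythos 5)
Your approach is genuinely different from the paper's. The paper's proof of Proposition~\ref{bottomThm} works entirely at the level of energy integrals: it integrates by parts twice over $Q_{\delta,\theta,p}$ to bound $\left|\int_{C_\theta^-}\partial_\nu u\,\partial_y u\right|$ by the side integrals $\int_{S_i}|\nabla u_{\delta,p}|^2\,dy$ (Claim~\ref{BottomConta}), then derives $\int_{Q_{\delta,\theta,p}}|\nabla u_{\delta,p}|^2\le C\delta^2\theta^4$ via a Caccioppoli cut-off combined with Lemma~\ref{LipBound} (Claim~\ref{boundQuadraticNablaLemma}), and finally uses a Fubini/measure argument to select a good slice $\tilde\theta\in]\theta,2\theta[$ where the side integral is $\le C\delta\theta^2$ (Claim~\ref{sideIntegralBound}). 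Crucially, the paper never tries to prove a pointwise bound on $\partial_\nu u_{\delta,p}$, precisely because near the two end-points of $C_\theta^-$ (where the ball comes within $O(\delta\theta^2)$ of $\partial\Omega$) a pointwise estimate is delicate; the Fubini averaging sidesteps this entirely.

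Your barrier-plus-corrector scheme is plausible in spirit but has a concrete gap in the displayed estimate. The barrier $W$ alone gives $\partial_\nu W(z_\phi)\sim A\,h(\phi)$, but the comparison $u_{\delta,p}\le W$ fails on the vertical sides $S_1,S_2$ (there $u_{\delta,p}\sim h(\theta)$ whereas $W\sim h(\theta)^2$), and your remedy is to add the harmonic corrector $V$ with boundary data $(u_{\delta,p}-W)_+$. The comparison then reads $\partial_\nu u_{\delta,p}(z_\phi)\le \partial_\nu W(z_\phi)+\partial_\nu V(z_\phi)$, and the $V$-term is \emph{not} $O(h(\phi))$: near the side $V\sim h(\theta)$ across a slab of height $\sim h(\theta)$, so $\partial_\nu V\sim 1$ there, with exponential decay kicking in only once one moves a distance $\gtrsim h(\theta)$ into the arc. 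Integrating $(\partial_\nu V)^2\,\delta\,d\phi$ over the arc gives $\sim C\delta\,\theta^2$, which is in fact the \emph{dominant} contribution and exactly the claimed bound; the $W$-term you integrated in your displayed inequality contributes only $O(\delta^3\theta^5+\cdots)$, i.e.\ lower order. So your displayed inequality omits the leading term, and the condition ``$\delta\le \theta^{1/4}/C$'' is a red herring — the true size of the integral is $\sim\delta\theta^2$ independently of any such smallness. The corner contribution is accounted for verbally in your proposal, but it must appear in the estimate or the argument is not closed.

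Two further points need care if you pursue this route. First, the exponential decay of $V$ in $Q_{\delta,\theta,p}$ is not a flat thin-slab statement: after rescaling by $\delta$ the region is a \emph{pinched lens} whose height varies from $\le\delta$ at the center to $\sim\theta^2$ at the ends, so the standard separation-of-variables decay rate $\pi/h$ must be integrated against a variable height $H(x)\approx d_{\delta,p}/\delta+x^2$, and one must check that $\int_x^{\sin\theta} dt/H(t)$ is large except within distance $O(\theta^2)$ of the ends. This works, but it requires a genuine argument (a Carleman/Agmon-type estimate or a careful explicit comparison). Second, the uniform bound $|\nabla u_{\delta,p}|\le C$ near the corners requires a rescaling of Theorem~\ref{boundaryReg} at the local scale $\rho\sim\delta\theta^2$ (not at scale $\delta$), combined with $\|u_{\delta,p}\|_{L^\infty}\lesssim\rho$ from Lemma~\ref{LipBound}; it is precisely this rescaling that makes the constant $\theta$-uniform, and it is worth spelling out, since a naive scale-$\delta$ application of Theorem~\ref{boundaryReg} near the endpoint $(\sin\theta,-\cos\theta)$ fails because $\partial\Omega/\delta$ is only $O(\theta^2)$ away. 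By contrast, the paper's coarea selection of $\tilde\theta$ bypasses every one of these pointwise difficulties and only needs the very soft integral bounds of Lemmas~\ref{boundOnEigenValues}, \ref{unifBoundLemma}, \ref{LipBound}, which is why it is the cleaner route.
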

        \begin{proof}
        We assume without loss of generality that the interior unit normal at $z(p)$ is $e_y$ and that $z(p) = (0,0)$. Also by the fact that the boundary is $C^2$, there exists a $\tilde{\delta}$ and $\tilde{C}>0$ independent of $p$, such that for all $\delta < \tilde{\delta}$:
        \begin{equation}
            [-\delta, \delta]
            \times [-\delta, \delta]
            \cap
            \Omega
            -
            B_\delta(p)
        \subset
            [-\delta, \delta]
            \times
            [-\tilde{C}\delta^2, \delta],
        \end{equation}
        and that the interior normal $\nu_z$ for $z \in \partial \Omega \cap [-\delta, \delta]
            \times [-\delta, \delta]$ satisfies:
        \begin{equation}
            \label{interiorNormalStuff}
            \langle
                \nu_z,
                e_y
            \rangle
            >
            0.
        \end{equation}

\begin{figure}
    \centering
            \begin{tikzpicture}[x=0.75pt,y=0.75pt,yscale=-1,xscale=1]
\draw  [color={rgb, 255:red, 0; green, 0; blue, 0 }  ,draw opacity=1 ][fill={rgb, 255:red, 200; green, 200; blue, 200 }  ,fill opacity=1 ] (249.68,156.33) -- (405.18,156.33) -- (405.18,250.83) -- (249.68,250.83) -- cycle ;
\draw   (250.27,135.57) .. controls (250.27,92.9) and (284.86,58.31) .. (327.53,58.31) .. controls (370.21,58.31) and (404.8,92.9) .. (404.8,135.57) .. controls (404.8,178.25) and (370.21,212.84) .. (327.53,212.84) .. controls (284.86,212.84) and (250.27,178.25) .. (250.27,135.57) -- cycle ;
\draw [color={rgb, 255:red, 0; green, 0; blue, 0 }  ,draw opacity=0.54 ]   (90,228.31) -- (570.67,228.97) ;
\draw  [fill={rgb, 255:red, 0; green, 0; blue, 0 }  ,fill opacity=1 ] (327.14,228.64) .. controls (327.14,226.87) and (328.57,225.44) .. (330.33,225.44) .. controls (332.1,225.44) and (333.53,226.87) .. (333.53,228.64) .. controls (333.53,230.4) and (332.1,231.83) .. (330.33,231.83) .. controls (328.57,231.83) and (327.14,230.4) .. (327.14,228.64) -- cycle ;
\draw [color={rgb, 255:red, 130; green, 130; blue, 130 }  ,draw opacity=1 ]   (90.53,251.2) -- (568.93,250.4) ;
\draw   (48,250.44) .. controls (235.56,221.37) and (423.11,221.37) .. (610.67,250.44) ;
\draw    (195.17,233.17) -- (186.22,193.22) ;
\draw [shift={(185.78,191.27)}, rotate = 77.37] [color={rgb, 255:red, 0; green, 0; blue, 0 }  ][line width=0.75]    (10.93,-3.29) .. controls (6.95,-1.4) and (3.31,-0.3) .. (0,0) .. controls (3.31,0.3) and (6.95,1.4) .. (10.93,3.29)   ;

\draw (253.5,213.4) node [anchor=north west][inner sep=0.75pt]  [font=\footnotesize]  {$z( p) =( 0,0)$};
\draw (414.32,198.07) node [anchor=north west][inner sep=0.75pt]  [font=\footnotesize]  {$[ -\delta ,\ \delta ] \times \left[ -\tilde{C} \delta ^{2}, \delta\right]$};
\draw (338.45,91.51) node [anchor=north west][inner sep=0.75pt]    {$B_{\delta }( p)$};
\draw (180.05,135.9) node [anchor=north west][inner sep=0.75pt]  [font=\Large]  {$\Omega $};
\draw (197.95,183.5) node [anchor=north west][inner sep=0.75pt]    {$\nu _{z}$};

\end{tikzpicture}
\caption{}
\end{figure}
        Let:
        $$
            d_{\delta, p}
        :=
            \text{dist}
            (\partial \Omega, B_\delta(p)).
        $$
        Define the set:
        $$
            Q_{\delta, \theta, p}
        :=
            \{
                (x,y) \in \Omega- B_\delta(p):
                -\delta \sin(\theta)\leq x \leq \delta \sin(\theta);
                \quad
                -\tilde{C}\delta^2
                \leq
                y
                \leq
                d_{\delta,p}
                +
                \delta- \delta \cos(\theta)
            \}.
        $$
Notice that if $\theta_0>0$ is small enough, if $\theta < \theta_0$, then:
        $$
            1-\cos(\theta)
            <
            2\theta^2.
        $$
        If $d_{\delta, p} < \delta^2$ this implies that:
        \begin{equation}
        \label{QcontainedInSquaredRectangle}
            Q_{\delta,\theta,p}
        \subset
            [-\delta \sin(\theta), \delta \sin(\theta)]
            \times
            [-\tilde{C} \delta^2, \delta^2 + \delta 2\theta^2].
        \end{equation}
        \begin{claim}
        \label{BottomConta}
        There exist $\tilde{\theta}$ and $\tilde{\delta}>0$ such that if $\delta < \tilde{\delta}$ and $\theta < \tilde{\theta}$ then:
            $$
            \left|
                \int_{C_\theta^-}
                \frac{\partial u_{\delta, p}}
                {\partial y}
                \frac{\partial u_{\delta, p}}
                {\partial \nu}
                d
                \mathcal{H}^1
            \right|
            \leq
            2
            \int_{Q_{\delta, \theta, p} \cap 
            \{x = -\delta\sin(\theta)\}}
            |\nabla u_{\delta, p}|^2
            dy
            +
            2\int_{Q_{\delta, \theta, p} \cap \{x = \delta \sin(\theta)\}}
            |\nabla u_{\delta, p}|^2
            dy.
            $$
        \end{claim}

            Let $S_1 = \{x=-\delta \sin(\theta)\} \cap Q_{\delta, \theta, p}$ and $S_2 = \{x=\delta \sin(\theta)\} \cap Q_{\delta, \theta, p}$.
            Using integration by parts we obtain that:
            \begin{align}
                \lambda_{\delta, p}\int_{Q_{\delta, \theta, p}}
                u_{\delta, p} 
                \frac
                {\partial u_{\delta, p}}
                {\partial y}
               &=
                -\int_{Q_{\delta, \theta, p}}
                \div(\nabla 
                \frac{\partial u_{\delta, p}}
                {\partial y}
                )
                u_{\delta, p}\\
                \label{randomIntegrationByPartsEq}
                &=
                \int_{Q_{\delta, \theta, p}}
                \langle
                    \nabla(
                    \frac{\partial u_{\delta, p}}
                    {\partial y})
                    ,
                    \nabla u_{\delta,p}
                \rangle
                -
                \int_{S_1 \cup S_2}
                u_{\delta, p}
                \langle
                    \nabla 
                    \frac{\partial u_{\delta, p}}
                    {\partial y}
                    ,
                    \nu
                \rangle.
            \end{align}
            Since $-\div(\nabla \frac{\partial u_{\delta,p}}{\partial y}) = \lambda_{\delta,p}\frac{\partial u_{\delta, p}}{\partial y}$, applying integration by parts again to \eqref{randomIntegrationByPartsEq}, we obtain:
            $$
                \lambda_{\delta, p}\int_{Q_{\delta, \theta, p}}
                u_{\delta, p} 
                \frac
                {\partial u_{\delta, p}}
                {\partial y}
            =
                \lambda_{\delta, p}\int_{Q_{\delta, \theta, p}}
                u_{\delta, p} 
                \frac
                {\partial u_{\delta, p}}
                {\partial y}
                +
                \int_{\partial Q_{\delta, \theta, p}}
                \frac{\partial u_{\delta,p}}{\partial y}
                \langle
                    \nabla
                    u_{\delta, p}
                    ,
                    \nu
                \rangle
                -
                \int_{S_1 \cup S_2}
                u_{\delta, p}
                \langle
                    \nabla 
                    \frac{\partial u_{\delta, p}}
                    {\partial y}
                    ,
                    \nu
                \rangle,
            $$
            equivalently:
            \begin{equation}
            \label{equalityAfterTwoIntByParts}
                \int_{\partial Q_{\delta, \theta, p}}
                \frac{\partial u_{\delta,p}}{\partial y}
                \langle
                    \nabla
                    u_{\delta, p}
                    ,
                    \nu
                \rangle
                -
                \int_{S_1 \cup S_2}
                u_{\delta, p}
                \langle
                    \nabla 
                    \frac{\partial u_{\delta, p}}
                    {\partial y}
                    ,
                    \nu
                \rangle=0
            \end{equation}
            We have that:
            $$
                \partial Q_{\delta, \theta, p}
                =
                S_1
                \cup
                S_2
                \cup
                C_\theta^-
                \cup
                (Q_{\delta, \theta, p} \cap \partial \Omega).
            $$
            When $\nu$ is the exterior normal to $\partial \Omega$ at $(Q_{\delta, \theta, p} \cap \partial \Omega)$, we know there exists $\tilde{\delta}>0$ such that, if $\delta < \tilde{\delta}$, then equation \eqref{interiorNormalStuff} is satisfied. This implies that since $u_{\delta,p}$ is positive that:
            $$
                \langle
                \nabla
                    u_{\delta, p}
                    ,
                    \nu
                \rangle
                \leq
                0,
                \quad
                \frac{\partial u_{\delta,p}}{\partial y}(x,y) \geq 0 \quad \forall (x,y) \in Q_{\delta, \theta, p}\cap \partial \Omega.
            $$
            Thus we conclude that:
            \begin{equation}
            \label{positiveIneqForBoundaryIntegral}
                \int_{(Q_{\delta, \theta, p} \cap \partial \Omega)}
                \frac{\partial u_{\delta,p}}{\partial y}
                \langle
                    \nabla
                    u_{\delta, p}
                    ,
                    \nu
                \rangle
                \leq
                0.
            \end{equation}
            If $\nu_z$ is the exterior normal of $Q_{\delta,\theta,p}$, then for $z\in S_1$, we have that:
            $$
                \langle
                    \nabla 
                    \frac{\partial u_{\delta, p}}
                    {\partial y}
                    ,
                    \nu_z
                \rangle
                =
                -
                \frac{\partial^2 u_{\delta, p}}
                {\partial x\partial y},
            $$
            and for $z \in S_2$:
            $$
                \langle
                    \nabla 
                    \frac{\partial u_{\delta, p}}
                    {\partial y}
                    ,
                    \nu_z
                \rangle
                =
                \frac{\partial^2 u_{\delta, p}}
                {\partial x\partial y}.
            $$
                
            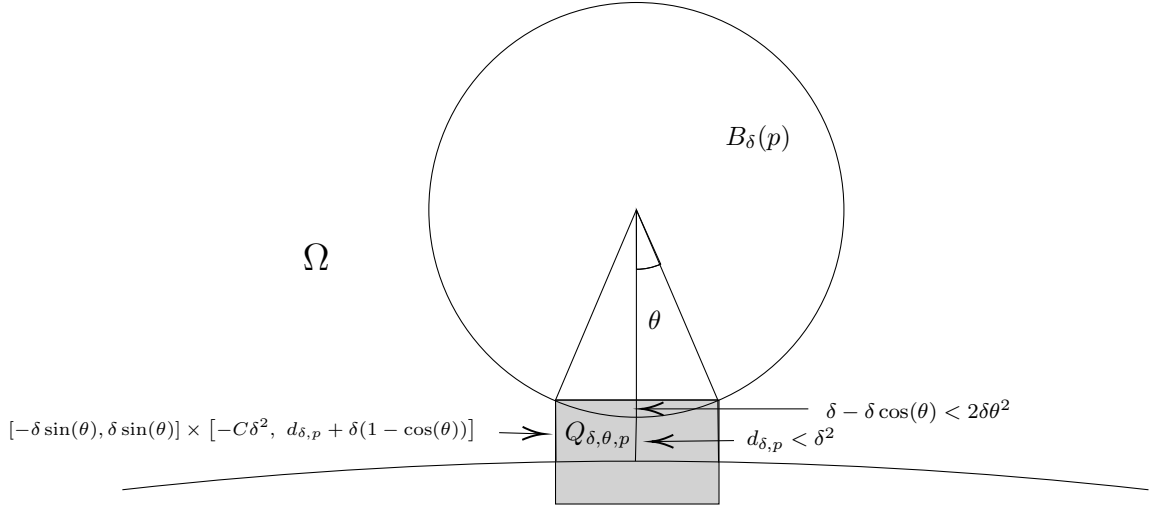
\begin{figure}[h]
    \centering
            \begin{tikzpicture}[x=0.75pt,y=0.75pt,yscale=-1,xscale=1]

\draw  [fill={rgb, 255:red, 210; green, 210; blue, 210 }  ,fill opacity=1 ] (289.33,238.22) -- (371.78,238.22) -- (371.78,290.69) -- (289.33,290.69) -- cycle ;
\draw   (225.44,142.22) .. controls (225.44,84.42) and (272.31,37.56) .. (330.11,37.56) .. controls (387.92,37.56) and (434.78,84.42) .. (434.78,142.22) .. controls (434.78,200.03) and (387.92,246.89) .. (330.11,246.89) .. controls (272.31,246.89) and (225.44,200.03) .. (225.44,142.22) -- cycle ;
\draw    (330.11,142.22) -- (289.33,238.22) ;
\draw    (330.11,142.22) -- (330.11,246.89) ;
\draw    (330.11,142.22) -- (371.17,238.17) ;
\draw  [draw opacity=0] (342.36,169.61) .. controls (338.64,171.28) and (334.51,172.21) .. (330.17,172.22) -- (330.11,142.22) -- cycle ; \draw   (342.36,169.61) .. controls (338.64,171.28) and (334.51,172.21) .. (330.17,172.22) ;  
\draw   (70.87,283.5) .. controls (243.37,264.23) and (415.88,264.23) .. (588.38,283.5) ;
\draw    (289.56,269.11) -- (289.33,238.22) ;
\draw    (371.56,269.78) -- (371.17,238.17) ;
\draw    (329.63,269.05) -- (330.11,246.89) ;
\draw [color={rgb, 255:red, 0; green, 0; blue, 0 }  ,draw opacity=0.27 ]   (379.33,258.67) -- (339.83,259.14) ;
\draw [shift={(337.83,259.17)}, rotate = 359.31] [color={rgb, 255:red, 0; green, 0; blue, 0 }  ,draw opacity=0.27 ][line width=0.75]    (10.93,-3.29) .. controls (6.95,-1.4) and (3.31,-0.3) .. (0,0) .. controls (3.31,0.3) and (6.95,1.4) .. (10.93,3.29)   ;
\draw [color={rgb, 255:red, 0; green, 0; blue, 0 }  ,draw opacity=0.26 ]   (289.33,238.22) -- (371.17,238.17) ;
\draw [color={rgb, 255:red, 0; green, 0; blue, 0 }  ,draw opacity=0.36 ]   (416.83,242.67) -- (334.83,242.67) ;
\draw [shift={(332.83,242.67)}, rotate = 360] [color={rgb, 255:red, 0; green, 0; blue, 0 }  ,draw opacity=0.36 ][line width=0.75]    (10.93,-3.29) .. controls (6.95,-1.4) and (3.31,-0.3) .. (0,0) .. controls (3.31,0.3) and (6.95,1.4) .. (10.93,3.29)   ;
\draw    (260.89,254.67) -- (281.56,255.27) ;
\draw [shift={(283.56,255.33)}, rotate = 181.68] [color={rgb, 255:red, 0; green, 0; blue, 0 }  ][line width=0.75]    (10.93,-3.29) .. controls (6.95,-1.4) and (3.31,-0.3) .. (0,0) .. controls (3.31,0.3) and (6.95,1.4) .. (10.93,3.29)   ;

\draw (334.83,191.95) node [anchor=north west][inner sep=0.75pt]    {$\theta $};
\draw (373.83,98.01) node [anchor=north west][inner sep=0.75pt]    {$B_{\delta }( p)$};
\draw (292.83,248.23) node [anchor=north west][inner sep=0.75pt]    {$Q_{\delta ,\theta, p}$};
\draw (384.5,250.9) node [anchor=north west][inner sep=0.75pt]  [font=\footnotesize]  {$d_{\delta, p}< \delta ^{2}$};
\draw (424.6,237.04) node [anchor=north west][inner sep=0.75pt]  [font=\footnotesize]  {$\delta -\delta \cos( \theta ) < 2\delta \theta ^{2}$};
\draw (12,246.07) node [anchor=north west][inner sep=0.75pt]  [font=\scriptsize]  {$[ -\delta \sin( \theta ) ,\delta \sin( \theta )] \times \left[ -C\delta ^{2} ,\ d_{\delta, p}+\delta ( 1-\cos( \theta ))\right]$};
\draw (160.67,158.4) node [anchor=north west][inner sep=0.75pt]  [font=\Large]  {$\Omega $};

\end{tikzpicture}
\caption{Image of $Q_{\delta,\theta,p}$}
\label{fig:enter-label}
\end{figure}

            Since:
            $$
                -\int_{S_1 \cup S_2}
                u_{\delta, p}
                \langle
                    \nabla 
                    \frac{\partial u_{\delta, p}}
                    {\partial y}
                    ,
                    \nu
                \rangle=
                \int_{S_1}
                    u_{\delta,p}
                    \frac{\partial^2 u_{\delta,p}}
                    {\partial y \partial x}
                    dy
                -
                \int_{S_2}
                    u_{\delta,p}
                    \frac{\partial^2 u_{\delta,p}}
                    {\partial y \partial x}
                    dy
            $$
            and
            $$
                \int_{\partial Q_{\delta, \theta, p}}
                \frac{\partial u_{\delta,p}}{\partial y}
                \langle
                    \nabla
                    u_{\delta, p}
                    ,
                    \nu
                \rangle
            =
                \int_{C_\theta^-}
                \frac
                {\partial u_{\delta,p}}
                {\partial y}
                \frac
                {\partial u_{\delta, p}}
                {\partial \nu}
            -
                 \int_{S_1}
                    \frac
                    {\partial u_{\delta,p}}
                    {\partial y}
                    \frac
                    {\partial u_{\delta,p}}
                    {\partial x}dy
            +
                \int_{S_2}
                    \frac
                    {\partial u_{\delta,p}}
                    {\partial y}
                    \frac
                    {\partial u_{\delta,p}}
                    {\partial x}dy
            +
                \int_{(Q_{\delta, \theta, p} \cap \partial \Omega)}
                \frac{\partial u_{\delta,p}}{\partial y}
                \langle
                    \nabla
                    u_{\delta, p}
                    ,
                    \nu
                \rangle
            $$
            substituting inequality \eqref{positiveIneqForBoundaryIntegral} in \eqref{equalityAfterTwoIntByParts} we obtain:
            $$
            0
            \leq
                -\int_{C_\theta^-}
                \frac
                {\partial u_{\delta,p}}
                {\partial y}
                \frac
                {\partial u_{\delta, p}}
                {\partial \nu}
            \leq
                \int_{S_1}
                \left(
                    u_{\delta,p}
                    \frac{\partial^2 u_{\delta,p}}
                    {\partial y \partial x}
                -
                    \frac
                    {\partial u_{\delta,p}}
                    {\partial y}
                    \frac
                    {\partial u_{\delta,p}}
                    {\partial x}
                \right)dy
                +
                \int_{S_2}
                \left(
                -
                    u_{\delta,p}
                    \frac{\partial^2 u_{\delta,p}}
                    {\partial y \partial x}
                +
                    \frac
                    {\partial u_{\delta,p}}
                    {\partial y}
                    \frac
                    {\partial u_{\delta,p}}
                    {\partial x}
                \right)dy.
            $$
            Applying integration by parts in the 1 dimensional integrals in $S_i$, we also have that:
            $$
                \int_{S_i}
                u_{\delta,p}
                \frac
                {\partial^2 u_{\delta, p}}
                {\partial y \partial x}
                dy
            =
                [u_{\delta,p} \frac{\partial u_{\delta,p}}{\partial x}]_{\partial S_1}
            -
                \int_{S_i}
                \frac
                    {\partial u_{\delta,p}}
                    {\partial x}
                    \frac
                    {\partial u_{\delta,p}}
                    {\partial y}
                    dy
            =
                -
                \int_{S_i}
                \frac
                    {\partial u_{\delta,p}}
                    {\partial x}
                    \frac
                    {\partial u_{\delta,p}}
                    {\partial y}
                    dy
            $$
            since for $z \in \partial S_i$, $u_{\delta,p}(z) = 0$.  From this we obtain
            $$
                -\int_{C_\theta^-}
                \frac
                {\partial u_{\delta,p}}
                {\partial y}
                \frac
                {\partial u_{\delta, p}}
                {\partial \nu}
            \leq
            -2
                \int_{S_1}
                    \frac
                    {\partial u_{\delta,p}}
                    {\partial x}
                    \frac
                    {\partial u_{\delta,p}}
                    {\partial y}
                    dy
                +
                2
                \int_{S_2}
                    \frac
                    {\partial u_{\delta,p}}
                    {\partial x}
                    \frac
                    {\partial u_{\delta,p}}
                    {\partial y}dy.
            $$
            If $\tilde{\theta}< \frac{\pi}{2}$, and $\theta<\tilde{\theta}$, then $\int_{C_\theta^-}
                \frac
                {\partial u_{\delta,p}}
                {\partial y}
                \frac
                {\partial u_{\delta, p}}
                {\partial \nu} \leq 0$, and so we have:
            $$
                \left|
                \int_{C_\theta^-}
                \frac{\partial u_{\delta, p}}
                {\partial y}
                \frac{\partial u_{\delta, p}}
                {\partial \nu}
                d
                \mathcal{H}^1
            \right|
            \leq
            2
            \int_{Q_{\delta, \theta, p} \cap 
            \{x = -\delta\sin(\theta)\}}
            |\nabla u_{\delta, p}|^2
            dy
            +
            2\int_{Q_{\delta, \theta, p} \cap \{x = \delta \sin(\theta)\}}
            |\nabla u_{\delta, p}|^2
            dy,
            $$
            finishing the proof.

        \begin{claim}
        \label{boundQuadraticNablaLemma}

            There exist $\tilde{\theta}>0$ and $C>0$ such that for all $\theta < \tilde{\theta}$, there exists $\tilde{\delta}(\theta, \Omega)$ such that if $\delta < \tilde{\delta}$ and $\dist(B_\delta(p), \partial \Omega)< \delta^2$, then:
            $$
                \int_{Q_{\delta, \theta, p}}
                |\nabla u_{\delta,p}|^2
                \leq
                C\delta^2\theta^4
            $$
        \end{claim}
            Consider a non-negative function $\phi \in C^\infty$ such that:
        $$        
            \phi|_{Q_{\delta, \theta, p}}
        =
            1, \quad 0\leq \phi \leq 1$$
        \begin{equation}
        \label{suppCondition}
        supp(\phi)
        \subset [-2\delta \sin(\theta), 2\delta \sin(\theta)]
        \times 
        [-2\delta \sin(\theta), 2\delta \sin(\theta)],
        \end{equation}
        \begin{equation}
        \label{derivativeBoundTest}
            |\nabla \phi|
            \leq
            \frac{C}{\delta \theta}.
        \end{equation}
        
            By testing $u_{\delta, p}$ with $u_{\delta,p}\phi^2 \in H^1_0(\Omega - B_\delta(p))$ we obtain:
            \begin{equation}
            \label{mainIneqForNablaBoundBottomTest}
                \int
                \langle
                \nabla u_{\delta,p}
                ,
                \nabla u_{\delta,p}
                \rangle
                \phi^2
            =
                \int 
                \langle
                \nabla u_{\delta,p},
                \nabla (u_{\delta, p} \phi^2)
                \rangle
            -
            2
                \int 
                \langle
                    \nabla u_{\delta,p},
                    \nabla \phi
                \rangle
                u_{\delta,p}
                \phi
            \leq
                \lambda_{\delta,p} 
                \int
                u_{\delta,p}^2
                \phi^2
            +
            2
                \int
                |\nabla u_{\delta,p}|
                |\nabla \phi|
                u_{\delta,p}\phi.
            \end{equation}
            There exists $\tilde{\delta}>0$, $\tilde{\theta}$ and $\tilde{C}>0$ such that if $\delta<\tilde{\delta}$ and $\theta < \tilde{\theta}$ such that if $\dist(B_\delta(p), \partial \Omega) < \delta^2$, then equation \eqref{QcontainedInSquaredRectangle} is satisfied, and from \eqref{suppCondition}, we conclude that for any $z \in supp(\phi) \cap \Omega - B_\delta(p)$ we have that:
            \begin{equation}
            \label{distToBoundaryBeQuadratic}
                \dist(z, \partial \Omega)
            \leq
                \tilde{C}\delta^2
                +
                2\delta \theta^2.
            \end{equation}
            In particular we have that:
            \begin{equation}
            \label{measureOfSupportOfStuuf}
                |supp(\phi) \cap \Omega - B_\delta(p)|
                \leq
                (4\delta \theta)
                (\tilde{C}\delta^2 + 2\delta \theta^2).
            \end{equation}
            since $|\sin(\theta)|<\theta$, and:
            $$
                supp(\phi) \cap \Omega - B_\delta(p)
            \subset 
                [-2\delta\sin(\theta), 2\delta \sin(\theta)]
                \times [-\Tilde{C}\delta^2, \delta^2 + 2\delta\theta^2].
            $$
            By Lemma \ref{LipBound} and equation \eqref{distToBoundaryBeQuadratic}, there exists $C>0$ such that, for all $z \in supp(\phi) \cap \Omega - B_\delta(p)$ we have:
            \begin{equation}
            \label{upBoundInBottomPart123}
                u_{\delta, p}(z)
            \leq
                C(\tilde{C}\delta^2 + 2\delta \theta^2).
            \end{equation}
            Using Lemma \ref{boundOnEigenValues} and equations \eqref{measureOfSupportOfStuuf} and \eqref{upBoundInBottomPart123}, we conclude that there exists $C>0$ such that:
            \begin{equation}
            \label{ineqForNormalInregral}
                \lambda_{\delta,p} 
                \int
                u_{\delta,p}^2
                \phi^2
            \leq
                C
                (\tilde{C}\delta^2+2\delta \theta)^3
                4\delta \theta.
            \end{equation}
            With the above and equation \eqref{derivativeBoundTest}, we also have that there exists $C>0$ such that:
            $$
                \int 
                |\nabla \phi|^2 u_{\delta,p}^2
            \leq
            C
                \frac{4\delta \theta
                (\tilde{C}\delta^2+2\delta\theta^2)^3}
                {(\delta \theta)^2}
            \leq
                C
                \frac{
                (\tilde{C}\delta^2+2\delta\theta^2)^3}
                {(\delta \theta)}.
            $$
           Now choose $\tilde{\delta}$ small enough depending on $\theta$ such that $\tilde{\delta} < \theta^2$. If $\delta < \tilde{\delta}$, then we have that for some $C>0$:
            \begin{equation}
            \label{ineqForutimesNablaTest}
                \int 
                |\nabla \phi|^2 u_{\delta,p}^2
            \leq
                C\frac{\delta^3 \theta^6}{\delta \theta}
            \leq
                C\delta^2 \theta^5
            \leq
                C\delta^2 \theta^4.
            \end{equation}
            for $\theta<1$. Substituting \eqref{ineqForNormalInregral} and \eqref{ineqForutimesNablaTest} in \eqref{mainIneqForNablaBoundBottomTest} implies:
            \begin{align*}
                \int
                \langle
                \nabla u_{\delta,p}
                ,
                \nabla u_{\delta,p}
                \rangle
                \phi^2
            &\leq
                C
                (\delta^2+2\delta \theta)^3
                2\delta \theta
            +
                2
                (
                    \int
                    |\nabla u_{\delta, p}|^2
                    \phi^2
                )^\frac{1}{2}
                (
                    \int
                    |\nabla \phi|^2
                    u_{\delta,p}^2
                )^\frac{1}{2}\\
            &\leq
                C\delta^4\theta^4
                +
                C
                \delta\theta^2
                (\int
                \langle
                \nabla u_{\delta,p}
                ,
                \nabla u_{\delta,p}
                \rangle
                \phi^2)^\frac{1}{2}
            \end{align*}
            Let $x = (\int
                \langle
                \nabla u_{\delta,p}
                ,
                \nabla u_{\delta,p}
                \rangle
                \phi^2)^\frac{1}{2}$. Then we have that:
                $$
                x^2
                \leq
                C\delta^4\theta^4
                +
                C\delta\theta^2
                x.
                $$
                By Lemma \ref{inequalityLemmaForXEq}, there exists $\tilde \theta$ and $\tilde \delta$ such that if $\delta < \tilde \delta$ and $\theta < \tilde \theta$ such that:
                $$
                    (\int
                \langle
                \nabla u_{\delta,p}
                ,
                \nabla u_{\delta,p}
                \rangle
                \phi^2)^\frac{1}{2}
                \leq
                2C\delta \theta^2.
                $$
                This finishes the proof.
        \begin{claim}
        \label{sideIntegralBound} 
        Let $\tilde{\delta}(\theta, \Omega)$ be the constant from Claim \ref{boundQuadraticNablaLemma}. There exist $\theta_0>0$ and $C>0$ (independent of $\theta$ and $\delta$) such that, for all $\theta \in ]0,\theta_0[$ and $\delta < \tilde{\delta}(\theta, \Omega)$, there exists $\tilde{\theta} \in ]{\theta}, 2{\theta}[$ such that:
            $$
                \int_{Q_{\delta, \tilde{\theta},p} \cap \{x= -\delta \sin(\tilde{\theta})\}}
                |\nabla u_{\delta, p}|^2
                dy
            \leq
                C\delta \theta^2;
            $$
            $$
                \int_{Q_{\delta, \tilde{\theta},p} \cap \{x= \delta \sin(\tilde{\theta})\}}
                |\nabla u_{\delta, p}|^2
                dy
            \leq
                C\delta \theta^2;
            $$
        \end{claim}
            Let:
            $$
                g(s):=
                \int_{Q_{\delta, 2\theta} \cap \{x=s\}}
                |\nabla u_{\delta,p}|^2dy.
            $$
            Then:
            $$
                \int_{Q_{\delta, 2\theta}}
                |\nabla u_{\delta, p}|^2
            =
                \int_{-\delta \sin(2\theta)}^{\delta \sin(2\theta)}
                g(s)
                ds.
            $$
            For $\delta < \tilde{\delta}(\theta, \Omega)$, by Lemma \ref{boundQuadraticNablaLemma}, there exists $C>0$ independent of $\delta$ and $\theta$ such that
            $$
                \int_{Q_{\delta, 2\theta}}
                |\nabla u_{\delta, p}|^2
            =
                \int_{-\delta \sin(2\theta)}^{\delta \sin(2\theta)}
                g(s)
                ds
                \leq
                C\delta^2\theta^4.
            $$
            Thus we have that:
            $$
                |
                \{
                s \in [-\delta \sin(2\theta), \delta \sin(2\theta)]:
                g(s)
                \geq
                C\delta \theta^2
                \}
                |
                \leq
                \delta \theta^2.
            $$
            Thus if $\theta_0$ is small enough, by a measure argument (since $|[\delta\sin(\theta),\delta\sin(2\theta)]|\geq \frac{1}{4}\theta > \delta \theta^2$ for small $\theta_0$), there must exist $\tilde{\theta} \in ]\theta,2\theta[$ such that:
            $$
                g(\delta\sin(\tilde{\theta}))
                \leq
                C\delta \theta^2;
                \quad
                g(-\delta\sin(\tilde{\theta}))
                \leq
                C\delta \theta^2.
            $$
            This shows the claim.

        To conclude the proof of Proposition \ref{bottomThm}, by Claim \ref{sideIntegralBound}, there exist $\theta_0$ and $C>0$, such that for all $\theta < \theta_0$ and $\delta < \tilde{\delta}(\Omega, 2\theta)$, there exists $\tilde{\theta} \in ]\theta, 2\theta[$, such that:
            $$
            \int_{Q_{\delta, \tilde{\theta}} \cap \{x= -\delta \sin(\tilde{\theta})\}}
                |\nabla u_{\delta, p}|^2
                dy
            \leq
                C\delta \theta^2,
            $$
            $$\int_{Q_{\delta, \tilde{\theta}} \cap \{x=\delta \sin(\tilde{\theta})\}}
                |\nabla u_{\delta, p}|^2
                dy
            \leq
                C\delta \theta^2.$$
            Applying Claim \ref{BottomConta} we conclude:
            $$
            \left|
                \int_{C_{\tilde{\theta}}^-}
                    \frac
                    {\partial u_{\delta, p}}
                    {\partial \nu}
                    \frac
                    {\partial u_{\delta,p}}
                    {\partial y}
                    d\mathcal{H}^1
            \right|
                \leq
                4C\delta \theta^2
            $$
            Since $\theta < \tilde{\theta}$ we have:
            $$
            \left|
                \int_{C_{\theta}^-}
                    \frac
                    {\partial u_{\delta, p}}
                    {\partial \nu}
                    \frac
                    {\partial u_{\delta,p}}
                    {\partial y}
                    d\mathcal{H}^1
            \right|
            \leq
            \left|
                \int_{C_{\tilde{\theta}}^-}
                    \frac
                    {\partial u_{\delta, p}}
                    {\partial \nu}
                    \frac
                    {\partial u_{\delta,p}}
                    {\partial y}
                    d\mathcal{H}^1
            \right|
                \leq
                4C\delta \theta^2
            $$
            concluding the proof.
        \end{proof}

    \section{Non trivial integral for Blowup}

        Let $p = (p_x,p_y) \in \Omega$. For any $M, \delta, d >0$, we define: 
        $$
            P_{M, \delta, d}(p)
        :=
            \left\{(x,y):
                (y-(p_y+\delta))
                \geq
                M(x-p_x)^2 
                \text{ and }
                (y-(p_y+\delta)) \leq \frac{3}{2}d
            \right\},
        $$
        $$
                \text{Top}_{M,\delta, d}(p)
            :=
                P_{M,\delta, d}(p) \cap \left\{ (y-(p_y+\delta)) = \frac{3}{2}d\right\},
        $$
        $$
            Q_{\delta}(p)
        :=
            [p_x-\delta, p_x+\delta]
            \times
            [p_y,p_y+2\delta],
        $$
        $$
            \overline{Q}_{\delta}(p)
        :=
            [p_x-\delta,p_x+\delta]
            \times
            [p_y+\frac{3}{2}\delta, p_y+2\delta].
        $$
        For this section we will need the following fact regarding sets with $C^2$ boundary.
        \begin{lemma}
        \label{geometryParabola}
            There exists $\tilde{d}>0$ and $M>0$ such that, for all $d<\tilde{d}$, if $B_\delta(p) \subset V_{\frac{d}{4}}$, 
            and the interior normal at the point $z(p) \in \partial \Omega$ is given by $e_y$, then:
            $$\Top_{M,\delta,d}(p) \subset V_{2d}-V_d.
            $$
        \end{lemma}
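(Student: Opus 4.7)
The plan is to use the $C^2$ regularity of the signed distance function $d_\Omega(\cdot) := \dist(\cdot, \partial \Omega)$ in a uniform tubular neighborhood of $\partial \Omega$, followed by a second-order Taylor expansion at $z(p)$.

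First I would translate so that $z(p) = (0,0)$. Since the interior normal at $z(p)$ equals $e_y$ and $p - z(p)$ is colinear with this normal, we have $p = (0, h)$ with $h = \dist(p, \partial \Omega)$. The assumption $B_\delta(p) \subset \Omega$ forces $h \geq \delta$, while $B_\delta(p) \subset V_{d/4}$ forces $h < d/4$, hence $h + \delta \leq 2h < d/2$. In these coordinates
\[
\Top_{M,\delta,d}(p) = \bigl\{(x,\; h + \delta + \tfrac{3}{2}d) : |x| \leq \sqrt{3d/(2M)}\,\bigr\}
\]
is a horizontal segment sitting at a fixed height $y \in [\tfrac{3}{2}d, 2d)$.

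The key analytic ingredient is standard: by compactness and $C^2$ regularity of $\partial \Omega$, there exist $r_0, C_0 > 0$ depending only on $\Omega$ such that $d_\Omega \in C^2(V_{r_0})$ with $\|d_\Omega\|_{C^2(V_{r_0})} \leq C_0$, and $\nabla d_\Omega(z) = \nu(z)$ at every $z \in \partial \Omega$. In particular $d_\Omega(0,0) = 0$ and $\nabla d_\Omega(0,0) = e_y$, so second-order Taylor at the origin yields the uniform estimate
\[
|d_\Omega(x, y) - y| \leq C_0 (x^2 + y^2) \qquad \text{for all } (x, y) \in V_{r_0}.
\]
Applying this at $(0, h+\delta) \in B_\delta(p) \subset V_{d/4}$ refines the crude bound to $h + \delta \leq \tfrac{d}{4} + C_0 (h + \delta)^2$, which for $\tilde d$ small enough gives $h + \delta < \tfrac{d}{3}$. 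Thus $y = h + \delta + \tfrac{3}{2}d \in [\tfrac{3}{2}d, \tfrac{11}{6}d]$, and for $\tilde d$ small every point of $\Top_{M,\delta,d}(p)$ lies in $V_{r_0}$.

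It remains to choose $M$. For $(x, y) \in \Top_{M,\delta,d}(p)$, using $x^2 \leq 3d/(2M)$ and $y \leq \tfrac{11}{6}d$, the Taylor estimate yields
\[
\tfrac{3}{2}d - C_0\!\left(\tfrac{3d}{2M} + \tfrac{121 d^2}{36}\right) \,\leq\, d_\Omega(x, y) \,\leq\, \tfrac{11}{6}d + C_0\!\left(\tfrac{3d}{2M} + \tfrac{121 d^2}{36}\right).
\]
Taking $M > 9 C_0$ makes the coefficient $3C_0/(2M)$ strictly smaller than $\tfrac{1}{6}$; for $\tilde d$ sufficiently small the $O(d^2)$ terms are then absorbed and both sides fall strictly inside $(d, 2d)$. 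This gives $\Top_{M,\delta,d}(p) \subset V_{2d} - V_d$, and both $M$ and $\tilde d$ depend only on $\Omega$ (through $C_0$ and $r_0$). The only mildly subtle point is the uniformity of the Taylor estimate above across all choices of $z(p) \in \partial \Omega$, but this is standard from compactness of $\partial \Omega$ and the $C^2$ regularity of $d_\Omega$ in a tubular neighborhood.
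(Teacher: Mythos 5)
The paper states Lemma \ref{geometryParabola} without proof, treating it as an elementary geometric consequence of the $C^2$ regularity of $\partial\Omega$, so your argument fills a gap rather than rivals an existing proof. It is correct in substance. The structure --- normalize to $z(p) = (0,0)$ and $p = (0,h)$ with $\delta \le h < d/4$ (so $h + \delta \le 2h < d/2$), sharpen this to $h + \delta < d/3$ via a Taylor refinement, and then bracket $\dist(\cdot,\partial\Omega)$ on $\Top_{M,\delta,d}(p)$ by the second-order Taylor expansion of the distance function at the origin --- is exactly what is needed, and the choice $M$ large relative to the $C^2$ bound $C_0$ correctly controls the horizontal $x^2$-contribution so that the whole segment lands in $\{d \le \dist(\cdot,\partial\Omega) < 2d\}$.

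One point deserves tightening: you invoke the uniform Taylor estimate for $d_\Omega$ at points of $\Top_{M,\delta,d}(p)$ while simultaneously asserting that those points lie in $V_{r_0}$, but $V_{r_0} \subset \Omega$ by definition and a priori you have not shown the points of $\Top$ are in $\Omega$ at all. The standard patch is to work with the signed distance function on a two-sided tubular neighborhood $\{|\dist^{\pm}(\cdot,\partial\Omega)| < r_0\}$, which is $C^2$ there for $r_0$ small. Any $(x,y) \in \Top_{M,\delta,d}(p)$ satisfies $|x| \le \sqrt{3d/(2M)}$ and $\tfrac{3}{2}d \le y \le \tfrac{11}{6}d$, hence is within Euclidean distance $O(\sqrt{d})$ of $z(p) \in \partial\Omega$; for $\tilde d$ small this places it in the two-sided neighborhood, and the resulting lower bound $\dist^{\pm}((x,y),\partial\Omega) \ge \tfrac{3}{2}d - C_0\bigl(\tfrac{3d}{2M} + \tfrac{121}{36}d^2\bigr) > 0$ then confirms $(x,y) \in \Omega$ after the fact. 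Also, to extract a single $\tilde d$ one should fix a concrete value of $M$ (say $M = 18C_0$, giving a definite margin $\tfrac{1}{12}$ in the upper bound) rather than leave $M > 9C_0$ as a family, since $\tilde d$ degenerates as $M \downarrow 9C_0$. Neither point affects the correctness of the idea.
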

    
        \begin{figure}
            \centering
\tikzset{every picture/.style={line width=0.75pt}} 

\begin{tikzpicture}[x=0.75pt,y=0.75pt,yscale=-0.8,xscale=0.8]
\draw  [fill={rgb, 255:red, 210; green, 210; blue, 210 }  ,fill opacity=1 ] (250,80) .. controls (301.33,234.67) and (352.67,234.67) .. (404,80) ;
\draw   (24.67,114) .. controls (227.33,302) and (430,302) .. (632.67,114) ;
\draw   (24.33,-25) .. controls (226.11,192.33) and (427.89,192.33) .. (629.67,-25) ;
\draw   (302,221) .. controls (302,207.19) and (313.19,196) .. (327,196) .. controls (340.81,196) and (352,207.19) .. (352,221) .. controls (352,234.81) and (340.81,246) .. (327,246) .. controls (313.19,246) and (302,234.81) .. (302,221) -- cycle ;
\draw    (250,80) -- (404,80) ;

\draw (308,212.4) node [anchor=north west][inner sep=0.75pt]    {$B_{\delta }( p)$};
\draw (140,165.4) node [anchor=north west][inner sep=0.75pt]    {$V_{d}$};
\draw (134,42.4) node [anchor=north west][inner sep=0.75pt]    {$V_{2d}$};
\draw (299,105.4) node [anchor=north west][inner sep=0.75pt]    {$P_{M,\delta, d }( p)$};
\draw (319,53.4) node [anchor=north west][inner sep=0.75pt]    {$\text{Top}_{M,\delta, d }( p)$};

\end{tikzpicture}
    \caption{Image of $\text{Top}_{M,\delta, d }( p)$ and $P_{M,\delta, d }( p)$}
    \label{fig:enter-label}
\end{figure}
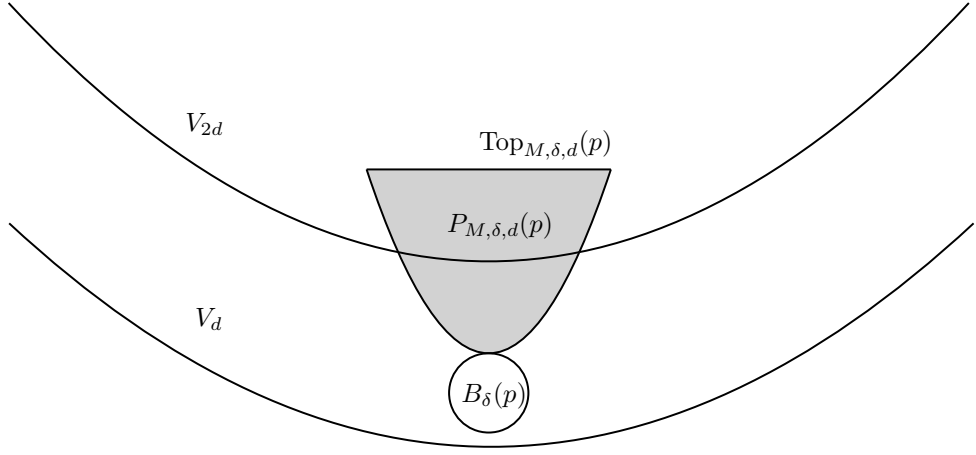

    The aim of this section is to prove the following:
    \begin{prop}
        There exists $M, \tilde d >0$ such that for all $d < \tilde d$, there exist $\tilde \sigma >0$ and $\tilde \rho >0$ such that if $B_\delta(p) \subset V_{\tilde \rho}$, then
        \begin{equation}
        \label{propLowBoundStuffing}
            \tilde{\sigma} \delta^4
    \leq
        \int_{\overline{Q}_{\delta}(p)}
        u_{\delta,p}^2,
        \end{equation}
    \end{prop}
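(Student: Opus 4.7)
The plan is to compare $u_{\delta,p}$ on the parabolic region $P_{M,\delta,d}(p)$ with a harmonic function whose boundary values are bounded below by $\Lambda d/8$ at the top and $0$ on the parabola, extract a pointwise lower bound of order $\delta$ on $\overline{Q}_\delta(p)$ via the Hopf boundary point lemma applied at the tip of the parabola, and then integrate.

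To set things up, I would fix $M$ and $\tilde d$ via Lemma \ref{geometryParabola}, and for each $d<\tilde d$ apply Lemma \ref{lowerBoundOnEigenfunctions} to pick $\tilde\rho\leq d/4$ so that $B_\delta(p)\subset V_{\tilde\rho}$ forces $u_{\delta,p}\geq \Lambda d/8$ on $\Top_{M,\delta,d}(p)$. Shrinking $\tilde\rho$ once more, the $C^2$ regularity of $\partial\Omega$ guarantees $P_{M,\delta,d}(p)\subset \Omega-B_\delta(p)$ (the parabola $y-(p_y+\delta)=M(x-p_x)^2$ is tangent from above to $\partial B_\delta(p)$ at the single point $(p_x,p_y+\delta)$ and lies strictly above the ball for all other $x$, while staying inside $\Omega$ by smoothness of the boundary). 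A direct check shows that $\overline{Q}_\delta(p)\subset P_{M,\delta,d}(p)$ as soon as $\delta\leq\min\{1/(2M),\,3d/2\}$.

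Next I would let $h$ be the (Perron) harmonic function in $P_{M,\delta,d}(p)$ with boundary values $0$ on the parabolic arc and $\Lambda d/8$ on $\Top_{M,\delta,d}(p)$. Since $-\Delta u_{\delta,p}=\lambda_{\delta,p} u_{\delta,p}\geq 0$ and $u_{\delta,p}\geq h$ on $\partial P_{M,\delta,d}(p)$ by the previous step, the maximum principle applied to the superharmonic function $u_{\delta,p}-h$ yields $u_{\delta,p}\geq h$ throughout $P_{M,\delta,d}(p)$. The crucial observation is now that $P_{M,\delta,d}(p)$ is merely a translate of the fixed region $P_{M,0,d}(0)$, which depends only on $M$ and $d$: writing $H$ for the corresponding harmonic function on that fixed region, one has $h(x,y)=H(x-p_x,\,y-p_y-\delta)$, and $H$ is independent of $\delta$ and $p$.

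I would then apply Hopf's boundary point lemma at the tip $(0,0)$ of $P_{M,0,d}(0)$ (where the parabola is $C^\infty$ and admits an interior tangent ball of radius $1/(2M)$) to produce $\alpha:=\partial_{\tilde y}H(0,0)>0$, with the tangential derivative automatically vanishing since $H\equiv 0$ along the horizontally tangent parabola. Boundary regularity (via a local flattening of the parabolic boundary followed by an application of Theorem \ref{boundaryReg} or a standard Schauder estimate) upgrades this to $H\in C^2$ up to the boundary near the tip, yielding the Taylor expansion
\begin{equation*}
H(\tilde x,\tilde y)=\alpha\tilde y+O(\tilde x^2+\tilde y^2).
\end{equation*}
Evaluating this on the translated image of $\overline{Q}_\delta(p)$, namely $[-\delta,\delta]\times[\delta/2,\delta]$, gives $H\geq \alpha\delta/4$ for $\delta$ small enough (depending only on $M,d,\Lambda$). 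Consequently $u_{\delta,p}\geq \alpha\delta/4$ on $\overline{Q}_\delta(p)$, and since $|\overline{Q}_\delta(p)|=\delta^2$ one obtains $\int_{\overline{Q}_\delta(p)}u_{\delta,p}^2\geq (\alpha^2/16)\delta^4$, i.e.\ $\tilde\sigma=\alpha^2/16$. The principal obstacle is exactly this Hopf/regularity step: it is essential to work on the fixed domain $P_{M,0,d}(0)$ so that $\alpha$ and the constant in $O(\cdot)$ are genuinely independent of $\delta$ and $p$. A minor technicality, which I would handle via the exterior cone condition and a standard removable-singularity argument, is the two corners of $\partial P_{M,\delta,d}(p)$ where the parabola meets $\Top_{M,\delta,d}(p)$ and at which the boundary data of $h$ is discontinuous but bounded.
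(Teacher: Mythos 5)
Your proposal is essentially the paper's own argument: construct a harmonic comparison function on the parabolic region $P_{M,\delta,d}(p)$, dominate it by $u_{\delta,p}$ via the maximum principle (with Lemma \ref{lowerBoundOnEigenfunctions} supplying the top boundary bound and Lemma \ref{geometryParabola} placing $\Top_{M,\delta,d}(p)$ in $V_{2d}-V_d$), exploit translation invariance to reduce to the fixed domain $P_{M,d}$, apply Hopf's lemma at the tangency point $(0,0)$, and integrate the resulting pointwise $O(\delta)$ lower bound over $[-\delta,\delta]\times[\delta/2,\delta]$. The one technical deviation is your choice of constant boundary data $\Lambda d/8$ on $\Top_{M,\delta,d}(p)$, which is discontinuous at the two corners and forces you into the Perron/removable-singularity discussion; the paper sidesteps this entirely by imposing the cosine profile $\frac{\Lambda d}{8}\cos\bigl(\tfrac{\pi}{2}\sqrt{\tfrac{2M}{3d}}(x-p_x)\bigr)$, which vanishes exactly at the corners and hence yields a continuous boundary datum (still bounded above by $\Lambda d/8$, so the comparison with $u_{\delta,p}$ goes through unchanged). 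This is only a cosmetic improvement, and your remedy for the corners is sound, but it is worth noting that the issue can be designed away at no cost.
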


    \begin{proof}

    Given $M, \delta, d>0$ consider $v_{\delta,p,d} \in H^1 (P_{M,\delta, d}(p))$ the smooth solution in $P_{M,\delta, d}(p)$ and satisfying the boundary conditions:
    \begin{equation}
        \Delta v_{\delta,p,d}
    =
        0,
    \quad
    \text{ on }
    \quad
    P_{M,\delta,d}(p)
    \end{equation}
    \begin{equation}
    \label{}
        v_{\delta,p, d}
            =
        0,
        \quad
        \text{ on }\quad
        \{(y-(p_y+\delta))
                =
                M(x-p_x)^2 \}
    \end{equation}
    $$
        v_{\delta,p,d}(x,y)
        =
        \frac{\Lambda d}{8}
        \cos\left(
            \frac{\pi}{2} \sqrt{\frac{2M}{3d}}(x-x_0)
        \right),
        \quad
        \text{ on }\quad
        \text{Top}_{M,\delta,d}(p).
    $$
    \begin{claim}
    \label{claimOfIneqOfHarmAndSol}
        There exist $M$, $\tilde \rho > 0$ such that if $B_\delta(p) \subset V_{\tilde \rho}$, then:
        \begin{equation}
            v_{\delta,p,d}
        \leq
            u_{\delta,p}
        \quad
        \text{ on }
        P_{M,\delta,d}(p)
        \end{equation}
    \end{claim}
Notice that due to Lemma \ref{lowerBoundOnEigenfunctions}, given $\tilde d>d>0$, we can choose $\tilde{\rho}>0$ such that if $B_\delta(p) \subset V_{\tilde{\rho}}$, then:
$$
    v_{\delta,p,d}|_{\text{Top}_{M,\delta,d}(p)}
\leq
    \frac{\Lambda d}{8}
\leq
    u_{\delta, p}|_{\text{Top}_{M,\delta,d}(p)}.
$$
Thus
$$
    v_{\delta,p,d}|_{\partial P_{M,\delta,d}(p)}
\leq
    u_{\delta,p}|_{\partial P_{M,\delta,d}(p)}
$$
and
$$
    -\Delta(u_{\delta,p}-v_{\delta,p,d}) \geq 0
$$
in $P_{M,\delta,d}(p)$, and so by maximum principles, we conclude that:
\begin{equation}
\label{inequalityEigenWithHarmonic}
    v_{\delta,p,d}|_{P_{M,\delta,d}(p)}
\leq
    u_{\delta,p}|_{P_{M,\delta,d}(p)}.
\end{equation}
We fix such a $d>0$ for the rest of this section.

Since $P_{M,\delta,d}(p)$ is simply a translation of $P_{M,d} = P_{M,d}(0,0)$, and if $v: P_{M,d} \rightarrow \mathbb{R}$ is the harmonic function having the boundary conditions above, we have that $v_{\delta,p,d}$ is a translation of $v$, that is
\begin{equation}
\label{translationOfHarmonicAux}
    v_{\delta,p,d}(z)
=
    v(z - (p_x,p_y+\delta)).
\end{equation}

\begin{claim}
\label{NonTrivialityLemmaBlowUp}
    For $M, \tilde d, \tilde \rho$ as in Claim \ref{claimOfIneqOfHarmAndSol} and $d < \tilde{d}$, there exist $\tilde{\sigma} >0$ and $\tilde{\rho}>0$, such that if $B_\delta(p) \subset V_{\tilde{\rho}}$, then:
$$
    \int_{P_{M,\delta,d}(p)\cap Q_{\delta}(p)}
        v_{\delta,p,d}^2
    \geq
        \tilde{\sigma} \delta^4.
$$
\end{claim}
By the observation of equation \eqref{translationOfHarmonicAux}, we only need to prove that there exists $\tilde \sigma >0$ such that:
$$
    \int_{P_{M,d}}
    v^2
\geq
    \tilde \sigma \delta^4.
$$
We can apply H\"opf's Lemma to conclude that there exists $c>0$, depending on $M$ and $d$ such that:
    \begin{equation}
    \label{lowBoundOnHarmonicAuxiliar}
        \frac{\partial v}
        {\partial y}
        (0,0)
        \geq
        c.
    \end{equation}
    Using $C^2$ continuity of $v$ in $P_{M,d}$ close to $(0,0)$, there exists $\tilde{\delta}>0$ small enough, such that for $\delta < \tilde{\delta}$, we have that:
    \begin{equation}
    \label{lowerBoundOnHarmonicParabola}
        \inf_{z \in P_{M,d} \cap [-\delta,\delta]^2}
        \frac{\partial v}
        {\partial y}(z)
        \geq \frac{c}{2}.
    \end{equation}
    With this, for $(x_0,y_0) \in P_{M, d}(p)\cap [-\delta,\delta]^2$ satisfying $y_0 \geq \frac{\delta}{2}$, we obtain using \eqref{lowerBoundOnHarmonicParabola}:
    \begin{equation}
    \label{integralLowBoundEqParabola}
        v(x_0,y_0)
    =
        \int_{Mx_0^2}^{y_0}
        \frac{
        \partial v(x_0,y)
        }{\partial y}
        dy
    \geq
    \frac{c}{2}
    (y_0 - Mx_0^2)
    \geq
        \frac{c}{2}(\frac{\delta}{2} - Mx_0^2).
    \end{equation}
    Choose $\tilde{\delta}>0$ small enough satisfying $M\tilde{\delta}< \frac{1}{4}$, such that for $\delta < \tilde{\delta}$ and $x_0 \in [-\delta, \delta]$ we have:
    \begin{equation}
    \label{smallParabolaStuff}
        Mx_0^2
        \leq
        M\delta^2
        \leq M\tilde{\delta} \delta
        \leq  \frac{\delta}{4},
    \end{equation}
    Using \eqref{integralLowBoundEqParabola} and \eqref{smallParabolaStuff} we conclude that for $(x_0,y_0) \in [-\delta, \delta] \times [\frac{1}{2}\delta, \delta]$:
    \begin{equation}
        \label{lowBoundOnHarmonicStuff}
         v(x_0,y_0)
         \geq
         \frac{c\delta}{8}.
    \end{equation}
    \begin{figure}
        \centering
\tikzset{every picture/.style={line width=0.75pt}} 

\begin{tikzpicture}[x=0.75pt,y=0.75pt,yscale=-1,xscale=1]

\draw  [fill={rgb, 255:red, 210; green, 210; blue, 210 }  ,fill opacity=1 ] (286.56,134.22) -- (375.68,134.22) -- (375.68,223.35) -- (286.56,223.35) -- cycle ;
\draw  [fill={rgb, 255:red, 210; green, 210; blue, 210 }  ,fill opacity=1 ] (61.45,-46.77) .. controls (241.23,253.97) and (421.01,253.97) .. (600.78,-46.77) ;
\draw  [draw={rgb, 255:red, 0; green, 0; blue, 0 }  ,fill opacity=1 ] (286.56,134.22) -- (375.68,134.22) -- (375.68,223.35) -- (286.56,223.35) -- cycle ;
\draw  [fill={rgb, 255:red, 170; green, 170; blue, 170 }  ,fill opacity=1 ] (286.56,134.22) -- (375.68,134.22) -- (375.68,157.11) -- (286.56,157.11) -- cycle ;

\draw   (17.56,205.33) .. controls (225.21,296.62) and (432.87,296.62) .. (640.53,205.33) ;
\draw   (286.73,223.17) .. controls (286.73,198.66) and (306.6,178.78) .. (331.12,178.78) .. controls (355.63,178.78) and (375.5,198.66) .. (375.5,223.17) .. controls (375.5,247.68) and (355.63,267.56) .. (331.12,267.56) .. controls (306.6,267.56) and (286.73,247.68) .. (286.73,223.17) -- cycle ;

\draw (136,12.07) node [anchor=north west][inner sep=0.75pt]    {$P_{M,\delta,d }( p)$};
\draw (376.67,202.07) node [anchor=north west][inner sep=0.75pt]    {$Q_{\delta }( p)$};
\draw (288.56,160.51) node [anchor=north west][inner sep=0.75pt]  [font=\scriptsize]  {$P_{M,\delta, d }( p) \cap Q_{\delta }( p)$};
\draw (266,100.96) node [anchor=north west][inner sep=0.75pt]  [font=\scriptsize]  {$\ \overline{Q}_{\delta}(p) = [ -\delta+p_x ,\ \delta +p_x] \ \times \left[ p_{y} +\frac{3}{2} \delta ,\ p_{y} \ +\ 2\delta \right]$};

\end{tikzpicture}
    \caption{Image of $Q_\delta(p)$ and $\overline{Q}_\delta(p)$}
    \end{figure}
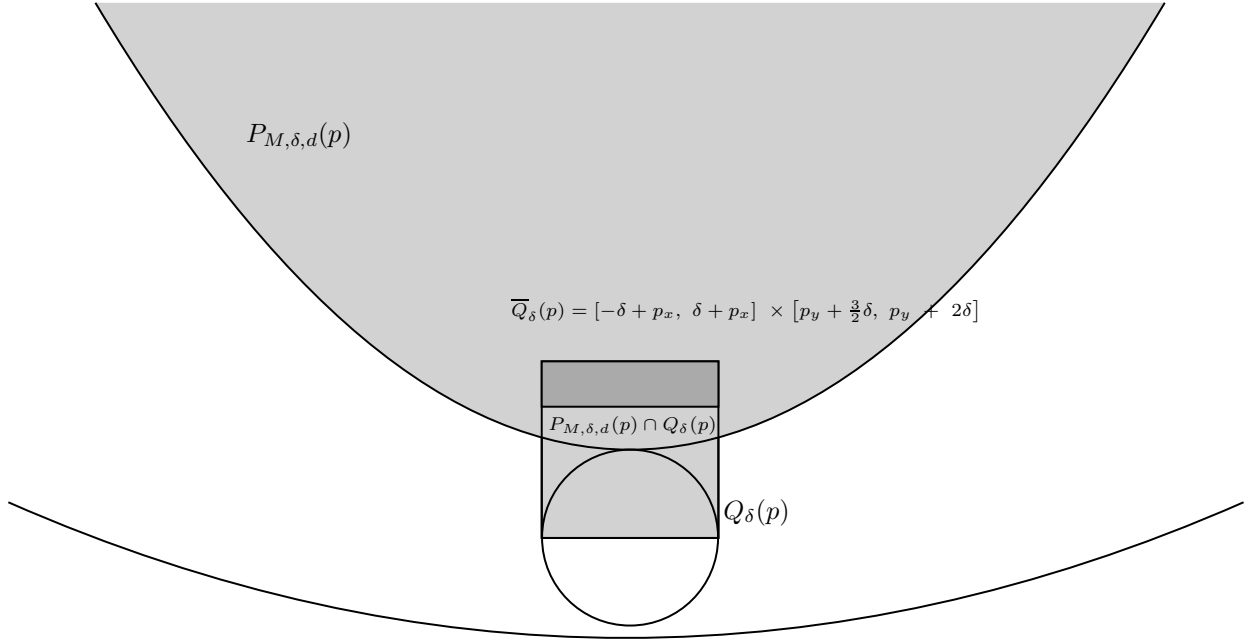
    Since $P_{M, d}(p)$ is a parabola, there exists $\tilde{\delta}>0$ small enough such that for all $\delta < \tilde{\delta}$ and $x < \delta$, we have:
    $$
        Mx^2
    <
        \frac{1}{2}\delta,
    $$
    that is
    \begin{equation}
    \label{rectangleInParabola}
        [ -\delta , \delta ]  \times [\frac{1}{2}\delta, \delta ]
        \subset 
        P_{M,d}(p) \cap [-\delta, \delta]^2.
    \end{equation}
    Using \eqref{lowBoundOnHarmonicStuff} and \eqref{rectangleInParabola}, we conclude that there exists $\tilde{\sigma} = \frac{1}{2}\frac{c^2}{16} = \frac{c^2}{32} >0$, such that:
    $$
        \int_{\overline{Q}_{\delta}(p)}
        v_{\delta,p,d}^2
    =
        \int_{[-\delta, \delta] \times [\frac{1}{2}\delta, \delta]}
        v^2
    \geq
    \tilde{\sigma} \delta^4.
    $$
    There exists $\tilde{\rho}>0$ small enough such that, if $B_\delta(p) \subset V_{\tilde{\rho}}$, then $\delta<\tilde{\delta}$ and equation \eqref{inequalityEigenWithHarmonic} is satisfied.

Applying Claims \ref{claimOfIneqOfHarmAndSol} and \ref{NonTrivialityLemmaBlowUp}, we directly obtain \eqref{propLowBoundStuffing}.
\end{proof}

\section{Blow Up Argument}
This section is divided in two subsections. The first one is dedicated to prove a positivity result for the side integrals \eqref{positiveSidesIntroduction}, that is in the set \eqref{SidesDef}. This is done by a contradiction argument, by constructing a blowup and studying it to achieve contradiction. In the first subsection most of the blowup construction is done. In the second section one proves a result about the top integral in \eqref{introTopIntegral}, also by contradiction using the blowup.

\subsection{Side Integrals}
Let $\theta_0 \in ]0, \frac{\pi}{2}[$ be a given angle, and define the sets:
$$
    A_1
:=
    \{z \in \partial B_\delta(p) : 
        -\cos(\theta_0)
        <
        \langle z-p, e_y \rangle
        <
        \cos(\theta_0);
        \langle z-p, e_x \rangle
        >0
    \};
$$
$$
    A_2
:=
    \{z \in \partial B_\delta(p) : 
        -\cos(\theta_0)
        <
        \langle z-p, e_y \rangle
        <
        \cos(\theta_0);
        \langle z-p, e_x \rangle
        <0
    \}.
$$
In this section, we will prove the following theorem: 

\begin{prop}
\label{positiveIntegrals}
    There exists $\tilde{\theta}_0 > 0$ such that for all $\theta_0 < \tilde{\theta}_0$, there exists $\tilde{\epsilon} := \tilde{\epsilon}(\Omega, \theta_0)$, such that if:
$$
    \delta < \tilde{\epsilon};
\quad
    \dist_{min}(B_\delta(p), \partial \Omega)
<
    \delta^2,
$$
then:
$$
    \int_{A_1}
                \frac{\partial u_{\delta, p}}
                {\partial \nu}
                \frac{\partial u_{\delta, p}}
                {\partial y}
                d\mathcal{H}^1
\geq
    0,
$$
$$
    \int_{A_2}
                \frac{\partial u_{\delta, p}}
                {\partial \nu}
                \frac{\partial u_{\delta, p}}
                {\partial y}
                d\mathcal{H}^1
\geq
    0.
$$
\end{prop}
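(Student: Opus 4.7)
I argue by contradiction, treating $A_1$ (the case of $A_2$ is identical by the reflection $x\mapsto -x$). Suppose there exist sequences $\delta_n\downarrow 0$ and $p_n\in\Omega$ with $\dist_{min}(B_{\delta_n}(p_n),\partial\Omega)<\delta_n^2$ and $\int_{A_1^{(n)}}\partial_\nu u_n\cdot \partial_y u_n\,d\mathcal{H}^1<0$, where $u_n:=u_{\delta_n,p_n}$. After rotating each configuration so the inward normal at $z(p_n)$ is $e_y$, consider the blow-ups
$$\overline{u}_n(x,y):=\frac{u_n(p_n+\delta_n(x,y))}{\delta_n}$$
on $\Omega_n\setminus B_1(0)$, with $\Omega_n:=\delta_n^{-1}(\Omega-p_n)$. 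They solve $-\Delta\overline{u}_n=\delta_n^2\lambda_n\overline{u}_n$ with $\delta_n^2\lambda_n\to 0$ (Lemma \ref{boundOnEigenValues}), vanish on $\partial\Omega_n\cup\partial B_1(0)$, and by Lemma \ref{LipBound} satisfy the uniform linear growth bound $\overline{u}_n(x,y)\le C(1+|(x,y)|)$. Since $\partial\Omega$ is $C^2$ at $z(p_n)$ and $|p_n-z(p_n)|=\delta_n(1+o(1))$, $\partial\Omega_n$ converges to $\{y=-1\}$ in $C^2$ on compacta. Combining Theorem \ref{ClassicReg} with the flat-boundary regularity of Theorem \ref{boundaryReg} away from the limiting cusp $(0,-1)$, a subsequence converges in $C^1_{\mathrm{loc}}(\overline{K}\setminus\{(0,-1)\})$ to a non-negative harmonic $v$ on $K:=\{y>-1\}\setminus B_1(0)$, with $v=0$ on $\partial K\setminus\{(0,-1)\}$ and at most linear growth. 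The non-triviality estimate of the previous section transfers to $\int_{\overline{Q}_1(0)} v^2\ge\tilde{\sigma}>0$, so $v\not\equiv 0$.

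Let $\theta$ denote the polar angle in $\mathbb{R}^2$ (from the positive $x$-axis) and set $w:=\partial v/\partial\theta$. Then $w$ is harmonic on $K$, vanishes on $\partial B_1(0)$ (as the tangential derivative of $v\equiv 0$ there), and on $\{y=-1,x>0\}$ equals $r\cos\theta\cdot\partial_y v>0$ (using $\partial_x v=0$ along the flat boundary and Hopf giving $\partial_y v>0$). By the reflection symmetry of $K$ and the uniqueness, up to scaling, of positive harmonic functions on $K$ vanishing on $\partial K\setminus\{(0,-1)\}$ with prescribed linear growth (via conformal equivalence of $K$ to a half-plane), $v$ is symmetric under $x\mapsto -x$, so $\partial_x v\equiv 0$ on $\{x=0\}\cap K$ and $w\equiv 0$ there. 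To control $w$ on the outer arc, perform a blow-down
$$v_R(x,y):=R^{-1}v\!\left(R(x,y)+(-1,0)\right),$$
a non-negative harmonic function of linear growth vanishing on its boundary; as $R\to\infty$ the rescaled domain exhausts $\mathbb{H}=\{y>0\}$, and a Phragm\'en--Lindel\"of/Liouville argument identifies the subsequential limit with $\alpha y$ for some $\alpha\ge 0$. Interior regularity promotes this to $\nabla v(z)\to \alpha e_y$ as $|z|\to\infty$, hence $w(z)=\alpha x + o(|z|)$ at infinity. Writing $Z_R:=\partial(K\cap\{x>0\}\cap B_R(0))$, one obtains $\liminf_{R\to\infty}\inf_{Z_R} w\ge 0$: $w=0$ on $\partial B_1(0)\cap\{x\ge 0\}$ and on $\{x=0\}\cap K$, $w>0$ on $\{y=-1,x>0\}$, and $w\ge \alpha R\cos\theta -o(R)$ on $\partial B_R\cap\{x>0,y>-1\}$. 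The minimum principle on $K\cap\{x>0\}\cap B_R(0)$ followed by $R\to\infty$ then yields $w\ge 0$ throughout $K\cap\{x>0\}$.

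Since $v\not\equiv 0$ forces $w\not\equiv 0$ (else $v$ would be $\theta$-independent on $\{x>0\}$, contradicting its boundary values), the strong maximum principle gives $w>0$ in the interior of $K\cap\{x>0\}$, and Hopf's lemma at the inner arc produces $\partial_\nu w>0$ strictly on $\partial B_1(0)\cap\{x>0\}$. Using $v|_{\partial B_1(0)}=0$ to rewrite the integrand as $(\partial_\nu v)^2\langle\nu,e_y\rangle$, and combining the Hopf inequalities $\partial_\nu v>0$ and $\partial_\theta(\partial_\nu v)=\partial_\nu w>0$ with integration by parts in the arc-length variable (so that the sign change of $\langle\nu,e_y\rangle$ across $\theta=0$ is compensated by the monotonicity of $(\partial_\nu v)^2$ along the arc), one concludes
$$\int_{A_1^\infty}(\partial_\nu v)^2\,\langle\nu,e_y\rangle\,d\mathcal{H}^1>0.$$
Since $A_1^{(n)}$ is uniformly at distance at least $\sin\theta_0>0$ from the asymptotic cusp $(0,-1)$, the $C^1_{\mathrm{loc}}$ convergence $\overline{u}_n\to v$ on a neighborhood of $\partial B_1(0)\cap A_1^\infty$ passes to the limit in the rescaled integrals, contradicting the assumed negativity. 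The main obstacle is the Phragm\'en--Lindel\"of/Liouville identification of the blow-down limit as $\alpha y$, together with the uniqueness/symmetry claim for $v$ required to handle the boundary segment $\{x=0\}\cap K$ in the minimum-principle step; both rely on the two-dimensional conformal structure of $K$ and on careful tracking of the convergence rate in $\nabla v(z)\to\alpha e_y$.
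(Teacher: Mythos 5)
Your proposal follows the same contradiction-and-blowup strategy as the paper — blow up to a nontrivial positive harmonic $v$ on $K$, exploit the symmetry $v(x,y)=v(-x,y)$, prove $\partial_\theta v\ge 0$ on $\{x>0\}$ via a blow-down plus the minimum principle, apply Hopf to get $\partial_\nu\partial_\theta v>0$ on the relevant arc, and transfer back by uniform convergence of gradients. So the route is essentially the one taken in the paper; I'll focus on where you leave gaps and how the paper fills them.

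You flag the two weak points yourself, and they are genuine gaps: the symmetry claim for $v$ and the convergence $\nabla v(z)\to\alpha e_y$. For the symmetry, appealing to ``uniqueness, up to scaling, of positive harmonic functions on $K$ vanishing on $\partial K\setminus\{(0,-1)\}$ with prescribed linear growth (via conformal equivalence)'' is not a theorem one can cite: the Riemann map distorts growth at infinity in an uncontrolled way, the cusp at $(0,-1)$ and the point at infinity may both contribute to the Martin boundary, and the ``prescribed linear growth'' class need not be one--dimensional a priori. The paper instead compares the antisymmetric part $f(x,y)=v(x,y)-v(-x,y)$ with an explicit homogeneous harmonic barrier $\overline h_R(r,\theta)=\frac{2C}{R^{\gamma-1}c}r^\gamma g(\theta)$ on a cone of opening $<\pi$ (so $\gamma>1$); by the maximum principle $|f|\le\overline h_R$ on larger and larger quarter-regions, and $\overline h_R\to0$ on compacta because $\gamma>1$, forcing $f\equiv0$. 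This is a concrete Phragm\'en--Lindel\"of-type argument that avoids any Martin boundary considerations. For the blow-down, ``interior regularity'' is not sufficient: you need control of $\nabla v$ up to the boundary ray $\{y=-1\}$, which the paper obtains from the flat-boundary $C^{1,1}$ estimate (Theorem \ref{boundaryReg}), and, crucially, you also need a uniform bound on $\nabla\tilde v_n$ (resp.\ $\nabla\overline u_n$) at a fixed reference point, without which the $C^{1,1}$ second-derivative bound alone does not keep the gradients from escaping to infinity along the sequence. The paper supplies this missing ingredient via a separate mean-value/sub-mean-value argument (Lemma \ref{gradientControl}) that bounds $|\nabla\overline u_n(\tfrac32,0)|$ uniformly, and this same lemma underlies the uniform gradient convergence $\|\nabla\overline u_n-\nabla v\|_{L^\infty(V)}\to0$ (Lemma \ref{ConnectingConvergence}) that you need at the very end to pass the strict sign of $\int_{A_1'}(\partial_\nu v)^2\langle\nu,e_y\rangle$ back to $\overline u_n$. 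Your outline tacitly assumes $C^1_{\mathrm{loc}}$ convergence from the start; the paper shows this is not automatic and has to be proved. (A minor slip: your blow-down translates by $(-1,0)$, but the cusp of $K$ is at $(0,-1)$; the paper translates by $(0,-1)$.)
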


To prove this, we will proceed by contradiction. So we suppose there exists a sequence $u_n:=u_n$ such that for all $n$ we have:
\begin{equation}
\label{contradictHyp}
     \int_{A_1}
                \frac{\partial u_n}
                {\partial \nu}
                \frac{\partial u_n}
                {\partial y}
                d\mathcal{H}^1
<
0;
\quad
\delta_n\rightarrow 0;
\quad
d_n:=\dist_{min}(\partial B_{\delta_n}(p_n), \partial \Omega)
<
\delta_n^2.
\end{equation}
We consider a blowup of this sequence given by:
\begin{equation}
\label{blowUpDefinition}
    \overline{u}_{n}(x)
:=
    \frac
    {u_{n}}
    {\delta_n}(\delta_n x + p_n).
\end{equation}
We suppose without loss of generality for all the section that $p_n = 0$ and that $e_y = \frac{p_n - z(p_n)}{|p_n - z(p_n)|}$, where $z(p)$ is defined in section 2.1. Recall that the set $K$ defined by equation \eqref{upHalfPlaneWithoutBall}.
\begin{lemma}
\label{harmonicFunction}
    There exists a non-trivial $v \in H^1_{loc}(K)$ such that up to a subsequence:
    $$
        \overline{u}_n
    \rightharpoonup_{H^1_{loc}(K)}
        v.
    $$
    Moreover, $v$ is harmonic in $K$, $v\geq 0$, $v|_{\partial K} = 0$. For any $K' \Subset int(K)$ compactly contained, there exists $C= C(K')$ such that:
    $$
        \sup_n
        ||
            \nabla \overline{u}_n
        ||_{L^2(K')}
        \leq
        C,
    $$
    and for any $R>0$,
    $$
        \sup_n ||u_n||_{L^\infty(B_R(0))} < +\infty
    $$
\end{lemma}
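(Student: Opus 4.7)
The plan is to carry out four steps: derive the PDE satisfied by $\overline u_n$, establish uniform bounds, extract a subsequential limit and identify its properties, and finally use the lower bound from the preceding section to obtain nontriviality. A direct computation from the definition $\overline{u}_n(x) = u_n(\delta_n x + p_n)/\delta_n$ gives $-\Delta \overline{u}_n = \delta_n^2 \lambda_n \overline{u}_n$, where $\lambda_n := \lambda_{\delta_n, p_n}$. By Lemma \ref{boundOnEigenValues} we have $\lambda_n \leq C$, so the right-hand side tends to zero uniformly as $n \to \infty$. For the pointwise bound on $\overline u_n$, note that since $\partial \Omega$ is $C^2$ and the interior normal at $z(p_n)$ has been aligned with $e_y$, locally $\partial\Omega$ is a graph of the form $y = -\delta_n - d_n + O(x^2)$ with $d_n < \delta_n^2$; hence for $(x,y)$ in any compact subset of $\overline K$ and $n$ large,
$$\dist(p_n + \delta_n(x,y),\partial\Omega) \leq \delta_n(y+1) + d_n + O(\delta_n^2).$$
Combined with Lemma \ref{LipBound} this yields $\overline u_n(x,y) \leq C((y+1) + \delta_n)$, a uniform $L^\infty_{loc}(\overline K)$ bound. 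Interior elliptic regularity (Theorem \ref{ClassicReg}) applied to nested pairs $K' \Subset K'' \Subset \operatorname{int}(K)$ then upgrades this to uniform $H^1_{loc}(K)$ bounds.

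Next, a diagonal argument over a compact exhaustion of $K$ produces a subsequence $\overline u_n \rightharpoonup v$ weakly in $H^1_{loc}(K)$ and strongly in $L^2_{loc}(K)$ by Rellich--Kondrachov. Passing to the limit in the distributional identity $-\Delta \overline u_n = \delta_n^2 \lambda_n \overline u_n$ gives $-\Delta v = 0$, so $v$ is harmonic on $K$. Positivity $v \geq 0$ is inherited pointwise. The boundary trace on $\partial B_1(0)$ vanishes because $\overline u_n|_{\partial B_1(0)} = 0$ and the trace operator is continuous under weak $H^1$ convergence on a bounded Lipschitz neighborhood of $\partial B_1(0)$. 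On the flat part $\{y = -1\}$, the uniform bound $\overline u_n(x,y) \leq C((y+1)+\delta_n)$ passes to the limit to give $v(x,y) \leq C(y+1)$, forcing the trace of $v$ to vanish there as well.

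For nontriviality, apply the lower bound from the previous section: under the hypothesis $d_n < \delta_n^2$ there exists $\tilde\sigma > 0$, independent of $n$, with $\int_{\overline Q_{\delta_n}(p_n)} u_n^2 \geq \tilde\sigma \delta_n^4$. The change of variables $z = (x-p_n)/\delta_n$ maps $\overline Q_{\delta_n}(p_n)$ to $\overline Q_1(0) = [-1,1]\times[3/2,2]$, which is compactly contained in $K$; since $u_n = \delta_n \overline u_n$ on this set and the Jacobian is $\delta_n^2$, the rescaled inequality becomes $\int_{\overline Q_1(0)} \overline u_n^2 \geq \tilde\sigma$. Strong $L^2$ convergence on this compact set yields $\int_{\overline Q_1(0)} v^2 \geq \tilde\sigma > 0$, so $v \not\equiv 0$. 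The uniform $L^\infty$ bound on $B_R(0)$ asserted in the lemma is immediate from Lemma \ref{unifBoundLemma}.

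The main delicate point is the passage to the boundary condition on $\{y=-1\}$: the rescaled boundary curves $(\partial\Omega-p_n)/\delta_n$ form a moving sequence that converges only in $C^2_{loc}$ to $\{y=-1\}$, and weak $H^1_{loc}$ convergence alone does not impose boundary values of $v$ there. This is precisely where Lemma \ref{LipBound} is essential: it provides a uniform pointwise modulus that transfers directly to $v$ and gives the boundary condition for free, bypassing any need for domain convergence in a stronger sense.
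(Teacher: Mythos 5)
Your argument reaches the same conclusions as the paper's but obtains the gradient bound by a genuinely different mechanism. Where you apply interior elliptic regularity (Theorem \ref{ClassicReg}) to nested sets $K'\Subset K''\Subset\operatorname{int}(K)$ to upgrade the uniform $L^\infty$ bound to uniform $H^2$ (hence $H^1$) control, the paper instead tests the equation for $u_n$ against $u_n\phi_n^2$ with a rescaled cut-off $\phi_n(x)=\phi(x/\delta_n)$, producing a Caccioppoli-type inequality $x_n^2\le M\delta_n^4+M\delta_n x_n$ for $x_n=\|\phi_n\nabla u_n\|_{L^2}$, which Lemma \ref{inequalityLemmaForXEq} converts into $\|\nabla\overline{u}_n\|_{L^2(K')}\le 2M$. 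Both are correct; the regularity route gives higher Sobolev control essentially for free, while the Caccioppoli argument is closer in spirit to the quantitative estimates the paper reuses elsewhere (e.g. Claim \ref{boundQuadraticNablaLemma}). The rest of your argument -- the linear-growth estimate from Lemma \ref{LipBound}, the distributional passage to harmonicity, and the nontriviality from the $\overline{Q}_{\delta_n}(p_n)$ bound rescaling to a constant lower bound on $[-1,1]\times[\tfrac32,2]$ -- coincides with the paper. Two small discrepancies: the $L^\infty$ bound at the end of the statement concerns $\overline{u}_n$ (the paper invokes it in Lemma \ref{ConnectingConvergence} as a bound on $\|\overline{u}_n\|_{L^\infty(B_2-B_1)}$), so it follows from your linear-growth estimate rather than from Lemma \ref{unifBoundLemma}; and your justification of $v|_{\partial B_1(0)}=0$ via trace continuity implicitly requires uniform $H^1$ bounds in a full neighborhood of the circle, which neither your interior estimates nor the paper's cut-off supported in $\operatorname{int}(K)$ supply directly -- one really needs the boundary estimate of Theorem \ref{boundaryReg} (as the paper uses in Lemma \ref{ConnectingConvergence}), though the paper's own proof of this lemma is equally terse on this point, so this is not a gap peculiar to your argument.
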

\begin{proof}
    First we notice that the sets $\frac{1}{\delta_n}(\Omega - B_{\delta_n}(p_n))$ will converge to $K$ due to hypothesis \eqref{contradictHyp} and the fact that $\partial \Omega$ is $C^2$. In particular for any compact set $K'\subset int(K)$, there exists $N$ big enough such that for all $n> N$ we have:
    $$
        K'
    \subset 
    \frac{1}{\delta_n}(\Omega - B_{\delta_n}(p_n)).
    $$
    Also we prove that:
    $$
        \sup_n
        ||\overline{u}_n||_{H^1(K')}
    <
        +\infty.
    $$
    This will prove that there exists $v \in H^1_{loc}(K)$, and a subsequence such that $u_n \rightharpoonup_{H^1_{loc}(K)} v$.

    By Lemma \ref{LipBound}, since $z(p_n) \in \partial \Omega$, there exists $C>0$ such that:

    $$
        \overline{u}_n(x)
    =
        \frac
    {u_n}
    {\delta_n}(\delta_n x + p_n)
    \leq
       \frac{1}{\delta_n} 
       C|\delta_n x + (p_n-z(p_n))|
    \leq
        C|x|
        +
        2C,
    $$
    where we use the fact that $|p_n-z(p_n)| \leq 2\delta_n$ for $n$ big enough. This implies that $\overline{u}_n$ is uniformly bounded in $L^\infty(K')$ and so it also is in $L^2(K')$ since it is compact.

    Now we will prove that for all $K' \subset K$, there exists a subsequence such that:

    $$
        \sup_n
        ||\nabla \overline{u}_n||_{L^2(K')}
    <
        +\infty.
    $$
To prove this we consider a test function $\phi \in C_c^\infty(K)$ such that for some $C>0$:
    $$\phi|_{K'}=1; \quad |\nabla \phi| \leq C; \quad 0 \leq \phi \leq 1, \quad |\{\phi>0\}|<C.$$
    Thus considering the rescaling $\phi_n(x):= \phi( {x}/{\delta_n})$ we have that:
    \begin{equation}
        \label{testFuncCond}
        |\nabla \phi_n| \leq \frac{C}{\delta_n};
        \quad
        |\{\phi_n>0\}|
        =
        |\{\phi>0\}|\delta_n^2
        \leq
        C\delta_n^2
    \end{equation}
    Now we test the equation of $u_n$ against $u_{n} \phi_n^2$ to obtain:
    \begin{align}
        \int_{\Omega}
        |\nabla u_n|^2\phi_n^2
        dx
    &=
        \lambda_{\delta_n,p_n}
        \int_{\Omega}
        u_n^2
        \phi_n^2
        dx
    -
        2\int_{\Omega}
        \langle
            \nabla u_n
        ,
            \nabla \phi_n
        \rangle
        u_n\phi_n
        dx\\
    \label{randEquationWithHolderAux}
    &\leq
        \lambda_{\delta_n,p_n}
        \int_{\Omega}
        u_n^2
        \phi_n^2
        dx
    +
        2
        ||\phi_n \nabla u_n||_{L^2(\Omega)}
        ||u_n \nabla \phi_n||_{L^2(\Omega)},
    \end{align}
    where in the last inequality we used H\"older's inequality. Now by the conditions of $\phi_n$ in \eqref{testFuncCond}, Lemma \ref{boundOnEigenValues} and Lemma \ref{LipBound}, there exists some $C>0$ such that, for all $z \in \{\phi_n u_n>0\}$ we have $u_n(z) \leq C\delta_n$, $\lambda_{\delta_n,p_n}\leq C$ and \eqref{testFuncCond} are satisfied, and so we conclude that there exists $M = \max\{C^3, C^\frac{5}{2}\}>0$ such that:
    $$
        \lambda_{\delta_n,p_n}
        \int_{\Omega}
        u_n^2
        \phi_n^2
        dx
    \leq
    C^2
    \delta_n^2
    |\{\phi_n>0\}|
    \leq
        C^3 \delta_n^4
    \leq
        M\delta_n^4;
    $$
    $$
    ||u_n\nabla \phi_n||_{L^2(\Omega)}
    \leq
    |\{\phi_n>0\}|^\frac{1}{2}\cdot
    ||u_n||_{L^\infty(\{\phi_n>0\})}
    ||\nabla \phi_n||_{L^\infty(\{\phi_n>0\})}
    \leq
    C^{\frac{5}{2}}
    \delta_n
    \leq
    M\delta_n.
    $$
    Thus by setting $x_n = ||\phi_n \nabla u_n||_{L^2(\Omega)}$ and substituting in \eqref{randEquationWithHolderAux} the previous inequalities, we obtain:
    $$
    x_n^2
    \leq
        M\delta_n^4
    +
        M\delta_n x_n.
    $$
    From this inequality, the fact that $\delta_n \rightarrow 0$ and Lemma \ref{inequalityLemmaForXEq}, there exists $N>0$ big enough such that, for all $n>N$:
    $$
        \int_{\delta_n K'}
        |\nabla u_n|^2
    \leq
        x_n^2
    \leq
        (2M\delta_n)^2.
    $$
    Now we have that:
    $$
        \int_{K'}
        |\nabla \overline{u}_n|^2
    =
        \frac{1}{\delta_n^2}
        \int_{\delta_n K'}
        |\nabla u_n|^2
        dx
    \leq
        \frac{(2M\delta_n)^2}{\delta_n^2}
    \leq
        (2M)^2,
    $$
    proving the uniform bound.

    This implies that there exists $v \in H^1_{loc}(K)$ and a subsequence $\overline{u}_n$, (which we leave with the same label), such that:
    $$
        \overline{u}_n
    \rightharpoonup_{H^1_{loc}(K)}
        v.
    $$
    Since $\overline{u}_n \geq 0$ this implies $v \geq 0$. Also since $\overline{u}_n \in H^1_0(\frac{1}{\delta_n}(\Omega - B_{\delta_n}(p_n)))$ this implies that:
    $$
        v|_{\partial K} = 0.
    $$
    To prove the harmonicity of $v$, let $\psi \in C_c^\infty(K)$. For $n$ big enough, we have that $supp(\psi) \subset \frac{1}{\delta_n}(\Omega - B_{\delta_n}(p_n))$. Thus:
    \begin{align*}
        \int_{K}
        \langle
            \nabla v,
            \nabla \psi
        \rangle
        dx
    &=
        \lim_n
        \int
        \langle
            \nabla
            \overline{u}_n
            ,
            \nabla \psi
        \rangle
        dx
    =
        -
        \lim_n
        \int
        \Delta \overline{u}_n
        \psi
        dx\\
    &=
        -
        \lim_n
        \delta_n\int
        \Delta u_n(\delta_n x+p_n)
        \psi(x)
        dx
    =
        \lim_n
        \delta_n\lambda_{\delta_n,p_n}\int
        u_n(\delta_n x+ p_n)
        \psi(x)
        dx
    \rightarrow 0,
    \end{align*}
    since $\delta_n \rightarrow 0$, $||u_n||_{L^\infty}$ and $\lambda_{\delta_n,p_n}$ is uniformly bounded by Lemma \ref{unifBoundLemma} and \ref{boundOnEigenValues} and $\psi$ has compact support.

    Finally we prove that $v$ is non-trivial. By Lemma \ref{NonTrivialityLemmaBlowUp}, there exists $\sigma>0$, such that:
    $$
        \int_{\overline{Q}_{\delta_n}(p_n)}
        u_n^2
    \geq
        \sigma \delta_n^3.
    $$
    This implies that:
    $$
        \int_{\frac{1}{\delta_n}\overline{Q}_{\delta_n}(p_n)}
        \overline{u}_n^2
    \geq
        \sigma.
    $$
    We have that the sets:
    $$
        \frac{1}{\delta_n}\overline{Q}_{\delta}(p_n)
    $$
   are exactly equal to:
    $$
        [-1,1] \times [\frac{3}{2},2] \subset int(K).
    $$
    Thus by the weak convergence in $H^1([-1,1] \times [\frac{3}{2},2])$, we have that $u_n$ converges strongly in $L^2([-1,1] \times [\frac{3}{2},2])$ we conclude:
    $$
        \int_{[-1,1] \times [\frac{3}{2},2]}v^2 = \lim_n \int_{[-1,1] \times [\frac{3}{2},2]}\overline{u}_n^2 
        \geq \sigma >0.
    $$
\end{proof}

\begin{lemma}
\label{linearBoundOverHarmonicLimit}
    There exists $C>0$ such that the function $v$ from Lemma \ref{harmonicFunction} satisfies
    \begin{equation}
        |v(x,y)|
    \leq
        C(y+1),
        \quad
        v(x,y)
    =
        v(-x,y).
    \end{equation}
    for all $(x,y)\in K$
\end{lemma}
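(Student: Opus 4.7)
The plan is to prove the two assertions separately, both resting on estimates already established for the sequence $u_n := u_{\delta_n, p_n}$ and its blowup $\overline u_n$.

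For the linear growth bound, I would apply Lemma \ref{LipBound} to obtain $u_n(w) \le C\,\dist(w, \partial \Omega)$ uniformly in $n$. Since by hypothesis the inward normal to $\partial \Omega$ at $z(p_n)$ is $e_y$ and $\partial \Omega$ is $C^2$, locally near $z(p_n) = (0, -\delta_n - d_n)$ the boundary is a graph $y = g_n(x)$ with $g_n(0) = -\delta_n - d_n$ and $g_n'(0) = 0$. A direct computation then gives
\[
\dist(\delta_n(x,y), \partial \Omega) \le \delta_n(y+1) + d_n + o(\delta_n)
\]
on any compact subset of $K$. Dividing by $\delta_n$ and using the hypothesis $d_n < \delta_n^2$ (so $d_n/\delta_n \to 0$) gives $\overline u_n(x,y) \le C(y+1) + o(1)$ pointwise; passing to the a.e.\ limit along a subsequence extracted from the $H^1_{loc}$ weak convergence of Lemma \ref{harmonicFunction}, and using $v \ge 0$, one concludes $|v(x,y)| = v(x,y) \le C(y+1)$.

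For the symmetry, set $\sigma(x,y) := (-x,y)$ and define $w_n := \overline u_n - \overline u_n \circ \sigma$ on
\[
D_n := \frac{1}{\delta_n}(\Omega - B_{\delta_n}(p_n)) \cap \sigma\bigl(\tfrac{1}{\delta_n}(\Omega - B_{\delta_n}(p_n))\bigr).
\]
Then $-\Delta w_n = \delta_n^2 \lambda_n w_n$ on $D_n$, and $w_n$ vanishes identically on the (symmetric) scaled $\partial B_1(0)$. The key structural input is that $g_n'(0) = 0$ together with $g_n \in C^2$ forces $g_n(s) - g_n(-s) = o(s^2)$ as $s \to 0$, so after blowup the scaled $\partial \Omega$ and its $\sigma$-reflection differ by $o(1)$ on any compact subset of $K$. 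Combined with Lemma \ref{LipBound}, the trace of $w_n$ on $\partial D_n \setminus \partial B_1(0)$ goes to $0$ uniformly on compacts. Using $\delta_n^2 \lambda_n \to 0$ (from Lemma \ref{boundOnEigenValues}) together with the $L^\infty$ control on $\overline u_n$ coming from the linear growth bound just established, an elliptic maximum-principle / comparison argument applied to $w_n$ on $D_n$ propagates this boundary smallness into the interior, giving $w_n \to 0$ uniformly on compact subsets of $K$. Passing to the limit then yields $v(x,y) - v(-x,y) = 0$.

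The harder step is the symmetry. The crucial structural fact is that the quadratic part of the Taylor expansion of $g_n$ at $0$ is even in $x$ (a direct consequence of $g_n'(0) = 0$, which comes from the choice of normal), so the asymmetry of $\partial \Omega$ is pushed into $o(x^2)$ higher-order terms that vanish in the blowup: this is exactly why $C^2$-regularity is the correct hypothesis and why a weaker $C^{1,\alpha}$ condition would not suffice. The other technical subtlety is converting the $o(1)$ boundary estimate on $w_n$ into an interior $L^\infty$ bound uniformly in $n$ on the unbounded family $D_n$, for which the linear growth bound of the first step provides the essential a priori control.
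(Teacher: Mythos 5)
Your derivation of the linear growth bound follows essentially the same computation as the paper: parametrize $\partial\Omega$ near $z(p_n)$ using the $C^2$ hypothesis, pick a boundary point at horizontal distance $\delta_n x$ to feed into Lemma \ref{LipBound}, divide by $\delta_n$, and pass to the limit. That part is fine.

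The symmetry is where you and the paper diverge, and where your proposal has a genuine gap. The paper works entirely with the limit function $v$: it sets $f := v(x,y) - v(-x,y)$ on $V_{1,R} := K\cap B_R(0,-1)\cap\{x>0\}$, observes that $f$ is harmonic with $f=0$ on $\partial V_{1,R}\setminus\partial B_R$ (from $v|_{\partial K}=0$ and the definition of $f$ on $\{x=0\}$), and uses the linear growth bound $|f|\le 2CR$ on $\partial B_R\cap K$. It then dominates $f$ by an explicit harmonic barrier $\overline h_R = \frac{2C}{R^{\gamma-1}c}\,r^\gamma g(\theta)$ built on a cone of opening angle $\frac{5\pi}{6}<\pi$, so that $\gamma>1$; the factor $R^{-(\gamma-1)}$ kills the barrier pointwise as $R\to\infty$, forcing $f\equiv 0$. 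You instead try to show $w_n := \overline u_n - \overline u_n\circ\sigma \to 0$ on compacts and claim that "an elliptic maximum-principle / comparison argument applied to $w_n$ on $D_n$ propagates this boundary smallness into the interior." That is precisely the hard step: $D_n$ has an outer boundary at scale $\sim 1/\delta_n$, where the linear growth bound only gives $|w_n|\lesssim R$, not smallness, so a bare maximum principle cannot close the argument. You would need exactly the kind of cone barrier the paper constructs — a Phragmén--Lindelöf argument with growth exponent strictly larger than $1$ — and would then also have to track the $O(\delta_n^2\lambda_n)$ right-hand side and the nonzero (though $o(1)$) trace on the perturbed boundary. The paper sidesteps all of that by passing to the limit first. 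Incidentally, your assertion that the $C^2$ regularity (via $g_n(s)-g_n(-s)=o(s^2)$) is what makes the symmetry work is somewhat misplaced: in the paper's argument the regularity has already been spent in Lemma \ref{harmonicFunction} to show the blowup domains converge to $K$ and $v|_{\partial K}=0$; once those facts are in hand, the symmetry follows from linear growth plus the cone barrier alone, with no further appeal to boundary regularity.
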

\begin{proof}
    First we prove the inequality.
    Let $(x,y) \in K$ be an interior point. For $n$ big enough we have that $(x,y) \in \frac{1}{\delta_n}\Omega$. The tangent space at $z(p_n)$ is $T_{z(p_n)}\partial \Omega = \{(x,y): y = -|z(p_n)-p_n|\}$.
    
    Since the boundary $\Omega$ is $C^2$, there exists $\tilde{C}>0$ (independent of $(x,y)$), and a point $(\delta_n x,y_n) \in \partial \Omega$ such that $\dist((\delta_n x,y_n), T_{z(p_n)}\partial \Omega)<\tilde{C}|x|^2\delta_n^2$, that is $|y_n + |z(p_n)-p_n|| \leq \tilde{C}|x|^2\delta_n^2$. We also notice that $|z(p_n)-p_n| = \delta_n + O(\delta_n^2)$.
    
    We can use this boundary point and use Lemma \ref{LipBound} to conclude
    \begin{align}
        \overline{u}_n(x,y)
    &=
        \frac{u(\delta_n(x,y))}{\delta_n}
    \leq
        \frac{C}{\delta_n}
        |\delta_n(x,y)-(\delta_n x,y_n)|
    \leq
        \frac{C|\delta_n y + |z(p_n-p_n)| -(y_n+|z(p_n)-p_n|)| }{\delta_n}\\
    &\leq
        C|y+1| + CO(\delta_n) + C\tilde{C}|x|^2\delta_n.
    \end{align}
    and so taking the limit we conclude that
    \begin{equation}
        v(x,y)
    \leq
        C|y+1|.
    \end{equation}
    
    We now prove the symmetry property. Given a radius $R$ consider the sets given by:
    $$
        V_{1,R}
    :=
        K \cap B_R(0,-1) \cap \{x>0\}
    $$
    $$
        V_{2,R}
    :=
        K\cap B_R(0,-1) \cap \{x<0\}
    $$
    We now consider the function $f: V_{1,R} \rightarrow \mathbb{R}$ given by:
    $$
        f(x,y)
    :=
        v(x,y)
        -
        v(-x,y).
    $$
    Notice that by the definition of this function we have:
    $$
        f|_{\partial V_{1,R}-\partial B_R(0,-1)}
    =
        0.
    $$
    The function is harmonic in $V_{1,R}$ and by Lemma \ref{linearBoundOverHarmonicLimit}, there exists $C>0$ such that:
    \begin{equation}
    \label{ineqForF}
        |f(x,y)| \leq 2C|(x,y)-(0,-1)| \leq 2CR
    \end{equation}
    for $(x,y) \in \partial B_R(0,-1)\cap K$. Now define the sets given by
    $$
        F
    :=
        \left\{
            (x,y) \in \mathbb{R}^2
        :
            -\frac{1}{2}
            \leq
            \frac
            {
                \langle 
                (x,y+1)
                ,
                e_x
                \rangle
            }
            {|(x,y+1)|};
            \quad
            -\frac{1}{2}
            \leq
            \frac{
            \langle 
            (x,y+1)
            ,
            e_y
            \rangle}{|(x,y+1)|}
        \right\}
    $$
    $$
        F'
    :=
        \{
            (x,y)\in\mathbb{R}^2:
            x\geq 0;\quad
            y\geq -1
        \} \subset F.
    $$
    We have that $F$ is a cone at the point $(0,-1)$ with an angle $\alpha = \frac{\pi}{2} + 2\frac{\pi}{6} < \pi$ since $\cos(\frac{\pi}{6}) = \frac{1}{2}$. Let $h : F \rightarrow \mathbb{R}$ be the positive harmonic function given by:
    $$
        h(r,\theta)
    =
        r^\gamma g(\theta),
    $$
    where:
    $$
    g \geq 0;
    \quad
    g\left(-\frac{\pi}{6}\right)= g\left(\frac{\pi}{2}+\frac{\pi}{6}\right) = 0.
    $$
    and $r$ and $\theta$ are the radius and angle measured from the point $(0,-1)$. Also since $\alpha < \pi$ we have that $\gamma > 1$. We have that $c = \min_{\theta \in [0,\frac{\pi}{2}]}g(\theta) > 0$.
    Now define the function given by:
    $$
    \overline{h}_R(r,\theta)
    :=
    \frac{2C}{R^{\gamma-1} c}
    h(r,\theta),
    $$
    which satisfies
    $$
    \overline{h}_R(r,\theta)|_{F' \cap \partial B_R(-1,0)}
    =
    \frac
    {2C R^\gamma g(\theta)}
    {R^{\gamma - 1} c}
    \geq
    2CR
    \geq
    |f(x,y)|.
    $$
    Thus we conclude that $-\overline{h}_R|_{\partial V_{1,R}}\leq f|_{\partial V_{1,R}} \leq \overline{h}_R|_{\partial V_{1,R}}$. Since both of these are harmonic, by maximum principle we have that:
    $$
      - \overline{h}_{R}(z) \leq f(z) \leq \overline{h}_R(z).
    $$
    for all $z \in V_{1,R}$. Since $\gamma > 1$, we have that $\overline{h}_{R} \rightarrow 0$ uniformly on compact sets as $R \rightarrow \infty$, and thus $f = 0$ proving that $v(x,y) = v(-x,y)$. 
\end{proof}

\begin{lemma}
\label{gradientControl}
    Let $w = (\frac{3}{2},0)$. There exists a constant $C>0$ independent of $n$ such that:
    $$
        \sup_{n}
        |\nabla \overline{u}_n(w)|\leq C.
    $$
\end{lemma}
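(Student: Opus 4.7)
The plan is to observe that $w = (3/2,0)$ lies strictly in the interior of $K$, since its distance to $\partial B_1(0)$ is $|w|-1 = 1/2$ and its distance to $\{y=-1\}$ is $1$. Consequently, for all $n$ large enough, a fixed small ball $B_r(w)$ (say $r=1/4$) is contained in the rescaled domain $\frac{1}{\delta_n}(\Omega - B_{\delta_n}(p_n))$. This is the key geometric input: once we have a definite buffer around $w$, we can apply interior elliptic regularity uniformly in $n$.

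First, by differentiating the equation $-\Delta u_n = \lambda_{\delta_n,p_n}u_n$ under the scaling $\overline{u}_n(x) = u_n(\delta_n x+p_n)/\delta_n$ I would get
\[
    -\Delta \overline{u}_n(x)
=
    \delta_n^{2}\lambda_{\delta_n,p_n}\overline{u}_n(x)
\quad\text{on }\tfrac{1}{\delta_n}(\Omega - B_{\delta_n}(p_n)).
\]
By Lemma \ref{boundOnEigenValues}, $\lambda_{\delta_n,p_n}$ is uniformly bounded, and by Lemma \ref{harmonicFunction}, $\overline{u}_n$ is uniformly bounded in $L^\infty(B_{1/2}(w))$. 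Since $\delta_n\to 0$, the right-hand side $g_n := \delta_n^{2}\lambda_{\delta_n,p_n}\overline{u}_n$ is uniformly bounded in $L^\infty(B_{1/2}(w))$ as well.

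Next I would apply the interior $C^{1,1}$ estimate of \cite[Theorem 2.14]{regularity} with $V = B_{1/4}(w)$ and the ambient set $B_{1/2}(w)$, which is admissible because $B_{1/2}(w)\subset \frac{1}{\delta_n}(\Omega - B_{\delta_n}(p_n))$ for $n$ large. This yields a constant $C$, depending only on $1/4$, such that
\[
    \|\overline{u}_n\|_{C^{1,1}(B_{1/4}(w))}
\leq
    C\bigl(1 + \|\overline{u}_n\|_{L^\infty(B_{1/2}(w))} + \|g_n\|_{L^\infty(B_{1/2}(w))}\bigr).
\]
Since each of the norms on the right is bounded uniformly in $n$, we conclude $\sup_n|\nabla \overline{u}_n(w)| \le \sup_n \|\overline{u}_n\|_{C^{1,1}(B_{1/4}(w))}\le C$, as desired.

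The only point requiring any care is verifying that $B_{1/2}(w)$ is eventually contained in the rescaled domain; this follows because $\frac{1}{\delta_n}(\Omega - B_{\delta_n}(p_n))\to K$ locally (established at the beginning of the proof of Lemma \ref{harmonicFunction}) together with the fixed positive distance from $w$ to $\partial K$. No serious obstacle is expected: this is a direct application of interior regularity once the geometric setup from Lemma \ref{harmonicFunction} is in place.
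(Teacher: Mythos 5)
Your proof is correct, and it takes a genuinely different and more streamlined route than the paper's.

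The paper deliberately avoids invoking the interior $C^{1,1}$ estimate at this point. Its argument is a hands-on, mean-value-comparison proof: it subtracts the linear Taylor approximation at $w$, defines $v_n(z) = \overline{u}_n(z) - \overline{u}_n(w) - \langle\nabla\overline{u}_n(w),z-w\rangle$, notes that $v_n$ is superharmonic, and combines the sign of spherical means of $v_n$, an $L^1$ gradient bound (coming from the $L^2$ bound of Lemma~\ref{harmonicFunction}), the positivity of $\overline{u}_n$, and the uniform bound on $\overline{u}_n(w)$ to get an explicit contradiction with $|\nabla\overline{u}_n(w)|\to\infty$. The inputs it needs are exactly the positivity, the uniform $L^\infty$ bound at $w$, and the $L^2$ gradient bound from Lemma~\ref{harmonicFunction}, together with Lemma~\ref{boundOnEigenValues}; no off-the-shelf $C^{1,1}$ estimate is used. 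Your approach instead applies \cite[Theorem~2.14]{regularity} directly on $B_{1/2}(w) \Subset$ the rescaled domain, using the uniform $L^\infty$ bound on $\overline{u}_n$ and the smallness of $g_n = \delta_n^2\lambda_{\delta_n,p_n}\overline{u}_n$. This is admissible: $w=(3/2,0)$ is at positive distance from $\partial K$, $B_{1/2}(w)$ is disjoint from the unit hole (externally tangent) and sits well above $\{y=-1\}$, so for $n$ large it is contained in $\tfrac{1}{\delta_n}(\Omega-B_{\delta_n}(p_n))$; the constant in Theorem~2.14 depends only on $\dist(B_{1/4}(w),\partial B_{1/2}(w))=1/4$ and is therefore $n$-independent. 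Net effect: your argument is substantially shorter and reuses machinery already set up in the paper; the paper's argument is more self-contained (it needs only the mean value property and integration by parts, not the $C^{1,1}$ theorem) and is the template reused verbatim in the proof of Lemma~\ref{linearConvergence} for $\tilde v_n$. Either suffices here. One cosmetic suggestion: take the inner ball slightly smaller than $B_{1/2}(w)$ (say $B_{2/5}(w)$) so that the compact inclusion $\overline{B_{2/5}(w)}\subset \tfrac{1}{\delta_n}(\Omega-B_{\delta_n}(p_n))$ holds without having to discuss the tangency with $\partial B_1(0)$.
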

\begin{proof}
    We start by noticing that for $n$ big enough, the ball $B_{\frac{1}{4}}(w)$ is contained in the domain of $\overline{u}_n$.
    Now define the function given by:
    $$
        v_n(z):= \overline{u}_n(z) - \overline{u}_n(w) - \langle \nabla \overline{u}_n(w), z -w \rangle.
    $$
    We have that:
    $$
        \Delta v_n(z)
    =
        -\delta_n^2 \lambda_{\delta_n,p_n}\overline{u}_n(z)
        \leq 0.
    $$
    Thus if we define:
    $$
        \varphi_n(r)
    :=
        \frac{1}{|\partial B_r|}
        \int_{\partial B_r(w)}
        v_n
        d\mathcal{H}^1
    $$
    then:
    $$
        \varphi_n'(r)
    :=
        \frac{1}{|\partial B_r|}
        \int_{B_r(w)}
        \Delta v_n
        d\mathcal{H}^1
        \leq 0.
    $$
    Since $v_n(0) = 0$ this implies that:
    \begin{equation}
    \label{AuxStuffBound1}
        \int_{\partial B_r(w)}
        v_n
        d\mathcal{H}^1 \leq 0
    \end{equation}
    for all $r \in ]0,\frac{1}{4}[$. After a rotation, we assume without loss of generality that $\nabla \overline{u}_n(w) = -\alpha_n(1, 0)$. We then have that:
    $$
        \frac{\partial v_n}        
        {\partial x}(w)
    =
        0.
    $$
    Also we have that:
    $$
        -\Delta \frac{\partial v_n}{\partial x}(z)
    =
        \delta_n^2
        \lambda_{\delta_n,p_n}
        \frac
        {\partial \overline{u}_n}
        {\partial x}(z)
    $$
    By Lemma \ref{harmonicFunction}, since $B_{\frac{1}{4}}(w)\subset int(K)$, there exists $C>0$ such that:
    $$
        ||\nabla \overline{u}_n||_{L^2(B_{\frac{1}{4}}(w))}
        \leq
        C
    $$
    From Holder's inequality we then conclude that for $r \in ]0,\frac{1}{4}[$:
    \begin{equation}
    \label{L1BoundForDerivative}
        ||\nabla \overline{u}_n||_{L^1(B_r(w))}
    \leq
        ||\chi_{B_r(w)}||_{L^2(B_\frac{1}{4}(w))}
        ||\nabla \overline{u}_n||_{L^2(B_{\frac{1}{4}}(w))}
    \leq
        Cr.
    \end{equation}
    Now define:
    $$
        \phi_n(r)
    :=
        \frac{1}{|\partial B_r|}
        \int_{\partial B_r(w)}
        \frac{\partial v_n}{\partial x}
        d\mathcal{H}^1.
    $$
    Using \eqref{L1BoundForDerivative} and Lemma \ref{boundOnEigenValues}, we conclude:
    $$
        |\phi_n'(r)|
    =
    \left|
        \frac{1}{|\partial B_r|}
        \int_{B_r(w)}
        \Delta \frac{\partial v_n}{\partial x}
    \right|
    \leq
        \frac{1}{|\partial B_r|}
        \int_{B_r(w)}
        \delta_n^2
        \lambda_{\delta_n,p_n}
        \left|
        \frac
        {\partial \overline{u}_n}{\partial x}
        \right|
    \leq
        \delta_n^2\frac{Cr}{r|\partial B_1|}
    \leq
        {C}|\partial B_1|\delta_n^2,
    $$
    replacing $C$ by $C|\partial B_1|^2$ if necessary.
    Since $\nabla v_n(w) = 0$, integrating the previous equation, we have for all $r \in ]0,\frac{1}{4}[$:
    $$
        \frac{1}{|\partial B_r|}
        \int_{\partial B_r(w)}
        \frac{\partial v_n}{\partial x}
        d\mathcal{H}^1
    =
        \phi_n(r)
    \leq
        C\delta_n^2|\partial B_1|r = C\delta_n^2 |\partial B_r|.
    $$
    Thus we obtain:
    \begin{align*}
        \int_{B_r(w)}
        \frac{\partial v_n}
        {\partial x}
    &=
        \int_{0}^r
        \left(
        \int_{\partial B_s(w)}
        \frac{\partial v_n}
        {\partial x}
        \mathcal{H}^1
        \right)
        ds
    \leq
        \int_{0}^r
        \left(
        \frac{|\partial B_s|}{|\partial B_s|}
            \int_{B_s(w)}
        \frac{\partial v_n}
        {\partial x}
        \mathcal{H}^1
        \right)
        ds\\
    &=
        \int_{0}^r
        \left(
            C\delta_n^2
            |\partial B_s|^2
        \right)
        ds
    =
        \frac{C\delta_n^2|\partial B_1|^2}
        {3}r^3.
    \end{align*}
    Using integration by parts we also have that:
    \begin{equation}
    \label{derivEqForAuxStuff}
        \int_{B_r(w)}
        \frac{\partial v_n}
        {\partial x}
    =
        \int_{\partial B_r(w)}
        v_n(z)(\nu_x)_z
        d\mathcal{H}^1
    \leq
        \frac{C\delta_n^2|\partial B_1|^2}
        {3}r^3.
    \end{equation}
    Summing equations \eqref{AuxStuffBound1} and \eqref{derivEqForAuxStuff} we obtain:
    \begin{align*}
    \int_{\partial B_r(w)}
        v_n(z)
        (1+(\nu_x)_z)d\mathcal{H}^1
    &\leq
        \frac{C\delta_n^2|\partial B_1|^2}
        {3}r^3,
    \end{align*}
    where $(\nu_x)_z = \frac{(z-w)_x}{|z-w|}$, is the $x$ coordinate of the outer normal $\nu$ of $\partial B_{|z-w|}(w)$ at $z$. Also since $\overline{u}_n$ are positive in $B_{\frac{1}{2}}(w)$ we have:
    $$
        v_n(z)
        \geq
        -
        \overline{u}_n(w)
        -\langle \nabla \overline{u}_n, z-w \rangle
    =
        -
        \overline{u}_n(w)
        +
        \alpha_n
        (z-w)_x.
    $$
    Defining $a_n = \overline{u}_n(w)$, we obtain:
    \begin{align*}
        \frac{\delta_n^2|\partial B_1|^2}
        {3}r^3
        &\geq
        \int_{\partial B_r(w)}
        \left(-a_n  + \alpha_n(z-w)_x\right)
        \left(1+\frac{(z-w)_x}{|z-w|}
        \right)d\mathcal{H}^1\\
        &=
        \int_{\partial B_r(w)}
        -a_n + \alpha_n\frac{(z-w)_x^2}{|z-w|}
        d\mathcal{H}^1
        =
        -a_n|\partial B_r|
        +\alpha_n
        \frac{r|\partial B_r|}{2},
    \end{align*}
    where we have used symmetry of $(z-w)_x$ in $\partial B_r(w)$.
    Since this is true for all $n \in \mathbb{R}$, since $\delta_n \rightarrow 0$ and $a_n = \overline{u}_n(w)$ is uniformly bounded by Lemma \ref{harmonicFunction}, if $\alpha_n = |\nabla \overline{u}_n(w)| \rightarrow \infty$ the inequality above cannot hold, thus there exists $C>0$ such that for all $n \in \mathbb{N}$ we have:
    $$
        |\nabla \overline{u}_n(w)| \leq C,
    $$
    concluding the proof.
\end{proof}

We can apply Theorem \ref{boundaryReg} to conclude the following

\begin{lemma}
\label{ConnectingConvergence}
    Let $v$ be the function from Lemma \ref{harmonicFunction} and define the subset of $K$:
    $$
        V
    :=
         \{
            (x,y) \in B_2(0) - B_1(0):
            y \geq -\cos(\theta_0)
        \}.
    $$
    Then:
    $$
        ||\nabla \overline{u}_n - \nabla v||_{L^\infty(V)}
    \rightarrow
        0.
    $$
\end{lemma}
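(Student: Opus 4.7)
The plan is to upgrade the weak $H^1_{\mathrm{loc}}(K)$ convergence $\overline{u}_n \rightharpoonup v$ supplied by Lemma \ref{harmonicFunction} to uniform $C^1$ convergence on $V$, by first establishing uniform $C^{1,1}$ estimates for $\overline{u}_n$ in a neighborhood of $\overline{V}$ and then invoking Arzelà--Ascoli together with uniqueness of the limit.

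For the uniform bounds, recall that $\overline{u}_n$ solves $-\Delta \overline{u}_n = g_n$ with $g_n := \delta_n^2 \lambda_{\delta_n,p_n} \overline{u}_n$ on $\frac{1}{\delta_n}(\Omega - B_{\delta_n}(p_n))$. Lemma \ref{harmonicFunction} provides a uniform $L^\infty$-bound on $\overline{u}_n$ on every $B_R(0)$, and Lemma \ref{boundOnEigenValues} bounds $\lambda_{\delta_n,p_n}$ uniformly; hence $\|g_n\|_{L^\infty(B_R(0))} = O(\delta_n^2) \to 0$. I would fix $\eta \in (0, 1-\cos(\theta_0))$ small enough that the $\eta$-neighborhood of $\overline{V}$ is contained in the domain of $\overline{u}_n$ for $n$ large and stays uniformly away from the flat piece $\{y=-1\}$ of $\partial K$. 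At points of $V$ at distance at least $\eta/2$ from $\partial B_1(0)$, I would apply the interior $C^{1,1}$ estimate (Theorem 2.14 of \cite{regularity}) to obtain a uniform bound. Near the arc $\overline{V}\cap \partial B_1(0)$, I would cover a neighborhood of this arc by finitely many $C^2$ charts that flatten $\partial B_1(0)$, transfer the PDE through each chart—yielding a second-order equation with smooth, uniformly elliptic coefficients that are $n$-independent (since the charts are fixed) and a right-hand side still uniformly bounded in $L^\infty$—and apply Theorem \ref{boundaryReg} to the transformed functions, which vanish on the flat piece. Patching the interior and boundary estimates gives $\sup_n \|\overline{u}_n\|_{C^{1,1}(V)} < +\infty$.

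With this uniform bound at hand, Arzelà--Ascoli extracts from any subsequence a further subsequence along which $\nabla \overline{u}_n$ converges uniformly on $V$ to some continuous vector field $G$. The weak $H^1_{\mathrm{loc}}(K)$ convergence then forces $G = \nabla v$ a.e.\ on $V$, hence everywhere on $V$ by continuity of both sides. Since the subsequential limit is independent of the choice of subsequence, the full sequence satisfies $\nabla \overline{u}_n \to \nabla v$ uniformly on $V$, which is the claim. The main technical obstacle is the boundary regularity near the curved arc $\partial B_1(0)$: Theorem \ref{boundaryReg} is stated only for a flat boundary, but since $\partial B_1(0)$ is smooth and $V$ only approaches this smooth portion of $\partial K$ (staying bounded away from $\{y=-1\}$), a fixed $C^2$ straightening reduces the problem to the flat case and the regularity constants remain $n$-independent. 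Everything else—the interior regularity and the identification of the limit—is essentially formal given Lemma \ref{harmonicFunction} and Lemma \ref{boundOnEigenValues}.
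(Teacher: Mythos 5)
Your argument follows the paper's overall structure: establish a uniform $D^2$ bound on $V$ via Theorem \ref{boundaryReg} and a change of variables near the arc $\partial B_1(0)$, upgrade to a uniform gradient bound, and then conclude by Arzelà--Ascoli together with the uniqueness of the weak limit from Lemma \ref{harmonicFunction}. The genuine divergence is in how the gradient bound is anchored. You apply the interior $C^{1,1}$ estimate (Theorem 2.14 of \cite{regularity}) at points of $V$ bounded away from $\partial B_1(0)$ to get the first-order bound directly. The paper instead proves a separate Lemma \ref{gradientControl}, which exploits the mean value property, the non-negativity of $\overline{u}_n$, and the near-subharmonicity to bound $|\nabla \overline{u}_n(w)|$ at a single fixed point $w = (3/2,0)$; this pointwise bound is then propagated over all of $V$ by the uniform $D^2$ bound. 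Your route is more economical and renders Lemma \ref{gradientControl} unnecessary for this step, which is a real simplification; the paper presumably develops Lemma \ref{gradientControl} because an analogous pointwise-gradient argument reappears in Lemma \ref{linearConvergence} for the blowdown sequence.

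Two small points to tighten. First, the sentence about fixing $\eta$ so that the ``$\eta$-neighborhood of $\overline{V}$ is contained in the domain of $\overline{u}_n$'' cannot be literally true: $\overline{V}$ touches $\partial B_1(0)$, so any full neighborhood meets the removed ball. You mean the $\eta$-neighborhood intersected with $K$ (equivalently, with $\frac{1}{\delta_n}(\Omega - B_{\delta_n}(p_n))$), which is what the subsequent boundary-regularity step requires. Second, Theorem \ref{boundaryReg} as stated provides only a bound on $D^2u$, not on $\nabla u$; to legitimately declare a uniform $C^{1,1}(V)$ bound after ``patching'' you need either the elementary interpolation inequality $\|\nabla u\|_{L^\infty} \lesssim d^{-1}\|u\|_{L^\infty} + d\,\|D^2 u\|_{L^\infty}$ on a half-ball of radius $d$, or to glue the boundary $D^2$ bound to the interior gradient bound coming from Theorem 2.14, using that $V$ has finite diameter. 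Either fix is routine, but the step should be stated since the $C^{1,1}$ claim is not an immediate consequence of the boundary estimate as quoted. The flattening-chart issue (Theorem \ref{boundaryReg} is stated for the flat Laplacian, while the pulled-back operator has variable coefficients) is glossed in the paper as well, so I will not count it against you.
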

\begin{proof}
    
    Using theorem \ref{boundaryReg} and a change of variables, since:
    $$
        -\Delta \overline{u}_n
    =
        \lambda_{\delta_n,p_n}
        \delta_n
        \overline{u}_n
    $$
    we can conclude that there exists $C>0$ such that in the set $V$ we have:
    \begin{equation}
    \label{2ndDerivBound}
        ||D^2 \overline{u}_n||_{L^\infty(V)}
    \leq
        C
        \left(
            1
        +
            ||\overline{u}_n||_{L^\infty(B_2(0) - B_1(0))}
        +
            ||\lambda_{\delta_n,p_n}\delta_n \overline{u}_n
            ||_{L^\infty(B_2(0) - B_1(0))}
        \right).
    \end{equation}

    Also, by Lemma \ref{harmonicFunction} we can conclude that there exists $C>0$ such that $||\overline{u}_n||_{L^\infty(B_2(0)- B_1(0))} \leq 2C$. Thus $||D^2 \overline{u}_n||_{L^\infty(V)}$ is uniformly bounded. Now given the point $w = (\frac{3}{2}, 0) \in V$, by Lemma \ref{gradientControl} we can conclude that there exists $C>0$ such that:
    \begin{equation}
    \label{gradPointBound}
        |\nabla \overline{u}_n (w)| 
    \leq 
        C.
    \end{equation}
    for all $n$. Combining \eqref{2ndDerivBound} and \eqref{gradPointBound} we can conclude that there exists $C>0$ such that:
    \begin{equation}
    \label{unifBoundOfGrad}
        ||\nabla \overline{u}_n||_{L^\infty(V)}
    \leq
        C.
    \end{equation}
    From \eqref{2ndDerivBound}, by Sobolev inequalities, we know that $\overline{u}_n$ are uniformly Lipschitz in $V$, and by \eqref{unifBoundOfGrad} we know that they are also uniformly bounded in $V$. Thus using Ascoli-Arzelà's Theorem, there exists a subsequence $\nabla \overline{u}_n$ (which we leave with the same label) such that it converges uniformly in $V$ (and by unicity of limits it must be $\nabla v$). From the convergence above and by Lemma \ref{harmonicFunction} we conclude that:
    $$
        ||
            \nabla \overline{u}_n
        -
            \nabla v
            ||_{L^\infty(V)}
        \rightarrow
        0,
    $$
    finishing the proof.
\end{proof}

Now we construct a blowdown sequence from the function $v$. Let $R_n>2$ be a sequence such that $R_n \rightarrow + \infty$.

We will construct the following blowdown sequence of the harmonic function $v$, given by:
$$
    \tilde{v}_n(x,y)
:=
    \frac{1}{R_n}
    v(R_n(x,y) + (0,-1)).
$$

\begin{lemma}
\label{linearConvergence}
    There exist $\tilde{v} : \{(x,y): y \geq 0\} \rightarrow \mathbb{R}$ such that:
    $$
        ||\nabla \tilde{v}_n- \nabla \tilde{v}||_{L^\infty(B_2(0) -(B_1(0) \cup \{ y>0\}))}
    \rightarrow 0,
    $$
    and some $\alpha \geq 0$ such that:
    $$
        \tilde{v}(x,y)
    =
        \alpha y.
    $$
\end{lemma}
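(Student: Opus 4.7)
The plan is to extract a subsequential limit of the blowdown sequence $\tilde{v}_n$ via elliptic regularity, identify the limiting domain as the upper half-plane $\mathbb{H}=\{y>0\}$, and classify the limit using a Liouville-type argument based on the linear growth bound from Lemma \ref{linearBoundOverHarmonicLimit} combined with the boundary vanishing.

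First I would observe that the domain of $\tilde{v}_n$ is exactly $\{y>0\}\setminus B_{1/R_n}(0)$: the condition $R_n(x,y)+(0,-1)\in K$ reads $R_n y>0$ and $R_n(x,y)\notin B_1(0)$. On this domain $\tilde{v}_n$ is harmonic (harmonicity is scale invariant), non-negative, and vanishes on $\{y=0\}\setminus B_{1/R_n}(0)$. Lemma \ref{linearBoundOverHarmonicLimit} translates under the blowdown into the pointwise bound
$$
0\le \tilde{v}_n(x,y)=\frac{1}{R_n}\,v(R_n x,\,R_n y-1)\le \frac{C}{R_n}(R_n y)=Cy,
$$
and the symmetry $v(x,y)=v(-x,y)$ gives $\tilde{v}_n(x,y)=\tilde{v}_n(-x,y)$.

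Next I would apply elliptic regularity. Interior estimates for harmonic functions give uniform $L^\infty$ bounds on $\nabla\tilde{v}_n$ on compact subsets of $\mathbb{H}$ from the local $L^\infty$ bound $\tilde{v}_n\le Cy$. Near the flat boundary $\{y=0\}$ away from the origin, Theorem \ref{boundaryReg} (applied in balls disjoint from $0$, where $\tilde{v}_n$ is harmonic and vanishes on a flat piece of boundary) provides uniform $C^{1,1}$ estimates up to $\{y=0\}$. Arzelà--Ascoli then furnishes a subsequence converging in $C^1_{loc}(\overline{\mathbb{H}}\setminus\{0\})$ to a function $\tilde{v}\ge 0$ that is harmonic on $\mathbb{H}$, vanishes on $\{y=0\}\setminus\{0\}$, is symmetric in $x$, and satisfies $\tilde{v}(x,y)\le Cy$. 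This yields in particular the $L^\infty$ gradient convergence on the set in the statement, which is a compact subset of $\overline{\mathbb{H}}\setminus\{0\}$ for $n$ large.

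To classify $\tilde{v}$, I would extend it to $\mathbb{R}^2\setminus\{0\}$ by odd reflection $\tilde{v}(x,-y):=-\tilde{v}(x,y)$. The $C^1$ matching along $\{y=0\}\setminus\{0\}$ together with the vanishing of $\tilde{v}$ there makes the extension harmonic on $\mathbb{R}^2\setminus\{0\}$. Since the growth at the origin is linear (in particular $o(\log|z|^{-1})$), the origin is a removable singularity, so $\tilde{v}$ extends harmonically to all of $\mathbb{R}^2$ with linear growth. The classical Liouville theorem for harmonic functions of polynomial growth forces $\tilde{v}(x,y)=ax+by+c$; the boundary condition $\tilde{v}(x,0)=0$ gives $a=c=0$, and non-negativity on $\mathbb{H}$ gives $b\ge 0$. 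Setting $\alpha:=b$ concludes.

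The main subtlety is twofold: ensuring the up-to-boundary $C^{1,1}$ estimates are uniform in $n$ on compact subsets of $\overline{\mathbb{H}}\setminus\{0\}$, and justifying that the degenerate point $B_{1/R_n}(0)$, which collapses onto $0$, is handled by the removable-singularity step rather than contributing an unwanted singular harmonic mode. Both of these hinge on the linear growth bound from Lemma \ref{linearBoundOverHarmonicLimit}, which propagates through the blowdown with the correct scaling.
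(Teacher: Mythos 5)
Your proposal is correct and follows essentially the same blueprint as the paper's proof: propagate the bound $v(x,y)\le C(y+1)$ from Lemma \ref{linearBoundOverHarmonicLimit} to $\tilde{v}_n(x,y)\le Cy$ under the blowdown, use Theorem \ref{boundaryReg} (plus interior estimates) for uniform $C^{1,1}$ control on compact subsets of $\overline{\mathbb{H}}$ away from the collapsing ball, extract a $C^1_{loc}$ subsequential limit by Arzel\`a--Ascoli, and then classify the limit as $\alpha y$ from nonnegativity, the linear growth bound, and the boundary vanishing. The main difference is that you make the classification step fully explicit via odd reflection, removable singularity at the origin, and Liouville, whereas the paper asserts it directly from the growth bound; your version is the more careful one. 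Two small slips worth noting: the collapsing excluded ball is $B_{1/R_n}\!\left((0,1/R_n)\right)$ (tangent to $\{y=0\}$ at the origin), not $B_{1/R_n}(0)$, and the convergence set written in the statement has an apparent typo (it should be a region of the form $(B_2(0)\setminus B_1(0))\cap\{y\ge 0\}$); neither affects the substance of your argument.
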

\begin{proof}
    Let $D := B_3(0) - \left(B_1(0) \cup \{y>0\}\right)$.

\begin{figure}
    \centering
\tikzset{every picture/.style={line width=0.75pt}} 

\begin{tikzpicture}[x=0.75pt,y=0.75pt,yscale=-1,xscale=1]
\draw   (320.06,209.83) .. controls (320.06,203.05) and (325.55,197.56) .. (332.33,197.56) .. controls (339.11,197.56) and (344.61,203.05) .. (344.61,209.83) .. controls (344.61,216.61) and (339.11,222.11) .. (332.33,222.11) .. controls (325.55,222.11) and (320.06,216.61) .. (320.06,209.83) -- cycle ;
\draw  [fill={rgb, 255:red, 210; green, 210; blue, 210 }  ,fill opacity=1 ] (118.15,222.26) .. controls (118.15,222.21) and (118.15,222.16) .. (118.15,222.11) .. controls (118.15,103.82) and (214.04,7.93) .. (332.33,7.93) .. controls (450.62,7.93) and (546.52,103.82) .. (546.52,222.11) -- (410.01,222.11) .. controls (410.01,179.21) and (375.23,144.44) .. (332.33,144.44) .. controls (289.44,144.44) and (254.66,179.21) .. (254.66,222.11) .. controls (254.66,222.13) and (254.66,222.15) .. (254.66,222.17) -- cycle ;
\draw  [fill={rgb, 255:red, 170; green, 170; blue, 170 }  ,fill opacity=1 ] (174.52,221.76) .. controls (174.95,134.87) and (245.52,64.56) .. (332.51,64.56) .. controls (419.78,64.56) and (490.52,135.3) .. (490.52,222.56) .. controls (490.52,222.59) and (490.52,222.61) .. (490.52,222.64) -- (410.37,222.6) .. controls (410.37,222.59) and (410.37,222.57) .. (410.37,222.56) .. controls (410.37,179.56) and (375.51,144.7) .. (332.51,144.7) .. controls (289.65,144.7) and (254.87,179.35) .. (254.66,222.17) -- cycle ;

\draw    (54,222.11) -- (610.67,222.11) ;


\draw   (217.73,221.98) .. controls (217.31,217.99) and (217.1,213.94) .. (217.1,209.83) .. controls (217.1,146.19) and (268.69,94.6) .. (332.33,94.6) .. controls (395.98,94.6) and (447.57,146.19) .. (447.57,209.83) .. controls (447.57,213.67) and (447.38,217.46) .. (447.02,221.2) ;  

\draw (329.33,227.51) node [anchor=north west][inner sep=0.75pt]  [font=\footnotesize]  {$B_{\frac{1}{R_{n}}}\left( 0,\frac{1}{R_{n}}\right)$};
\draw (232,55.07) node [anchor=north west][inner sep=0.75pt]    {$D$};
\draw (303,73.9) node [anchor=north west][inner sep=0.75pt]  [font=\footnotesize]  {$D\cap B_{2}( 0)$};
\draw (268.17,111.23) node [anchor=north west][inner sep=0.75pt]  [font=\scriptsize]  {$\partial B_{\frac{3}{2}}\left( 0,\frac{1}{R_{n}}\right) \cap \{y >0\}$};

\end{tikzpicture}
\caption{}
\label{fig:enter-label}
\end{figure}
    
    By Theorem \ref{boundaryReg}, since $v$ is harmonic, we can conclude that there exists $C>0$ such that:
    \begin{equation}
        \label{2ndDivBoundOfBlowDown}
        ||D^2{\tilde{v}}_n||_{L^\infty(B_2\cap D)}
    \leq
        C
        \left(
            1
        +
            ||\tilde{v}_n||_{L^\infty(D)}
        \right).
    \end{equation}
    Also using Lemma \ref{harmonicFunction} we can conclude that there exists $C>0$ such that $||\tilde{v}_n||_{L^\infty(D)} \leq 2C$, and so $||D^2 \tilde{v}_n||_{L^\infty(B_2\cap D)}$ is uniformly bounded. Also given the point $w = (\sqrt{3}/2, \sqrt{3}/2) \in B_2^+(0) - B_1(0)$, since $\Tilde{v}_n$ are positive and harmonic, we can do a proof similar to Lemma \ref{gradientControl}, to conclude that there exists $C>0$ such that:
    \begin{equation}
    \label{unifBoundGradBlowdown}
        \sup_n |\nabla \tilde{v}_n(w)|
    \leq
        C.
    \end{equation}
    From equation \eqref{unifBoundGradBlowdown} and the uniform bound of $||D^2 \Tilde{v}_n||_{L^\infty(B_2^+ - B_1(0))}$ we must have that $\nabla v_n$ must converge in the uniform topology in $B_2^+(0) - B_1(0)$. Generalizing this argument, substituting $B_1(0)$ by $B_\epsilon(0)$ a smaller ball, we can also conclude that:
    $$
        \tilde{v}_n \rightarrow_{H^1_{loc}(\mathbb{H})} \tilde v
    $$
    where $\tilde{v}$ is harmonic in the upper half plane $\mathbb{H}$ and $\tilde{v}|_{\partial \mathbb{H}} = 0$. From Lemma \ref{linearBoundOverHarmonicLimit} we have that $\tilde{v}_n(x,y) \leq Cy$. Thus, since $\tilde{v}$ is positive and it satisfies a bound $\tilde{v}(x,y) \leq C y$, from the fact it is harmonic, we can conclude that:
    $$
        \tilde{v}(x,y) = \alpha y
    $$
    for some $\alpha \geq 0$. From the uniform convergence of the gradients the proof follows.
\end{proof}

\begin{lemma}
\label{positiveAngleHarmonic}
    Let $v$ be the function from Lemma \ref{harmonicFunction}.
    We must have that:
    $$
        \frac
        {\partial v}
        {\partial \theta}
        (x,y)
        \geq
        0
    $$
    for $x> 0$ and $(x,y) \in K$ defined in equation \eqref{upHalfPlaneWithoutBall}.
\end{lemma}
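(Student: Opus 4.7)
The plan is to show that $w := \partial_\theta v$ is a non-negative harmonic function on $\Omega := K \cap \{x > 0\}$ by combining boundary analysis with the behaviour at infinity from Lemma \ref{linearConvergence}, and closing via a maximum-principle / Phragm\'en--Lindel\"of argument.

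First, $w$ is harmonic on $\Omega$, since $v$ is and $\Delta$ commutes with $\partial_\theta$ in polar coordinates (equivalently, the Cartesian vector field $-y\partial_x + x\partial_y$ preserves harmonicity). Next, I identify $w$ on $\partial\Omega$. On the arc $\partial B_1(0) \cap \{x \geq 0\}$, $v \equiv 0$ and $\partial_\theta$ is tangent to the circle, so $w \equiv 0$. On the ray $\{x = 0,\, y \geq 1\}$, the reflection symmetry $v(x,y) = v(-x,y)$ from Lemma \ref{linearBoundOverHarmonicLimit} yields $\partial_x v(0,y) = 0$, so $w(0,y) = -y\,\partial_x v(0,y) = 0$. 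On $\{y = -1,\, x > 0\}$, $v$ vanishes along the line and so the tangential derivative $\partial_x v$ is identically zero; hence $w = x\,\partial_y v$ on this piece. By the strong maximum principle (applied to the non-trivial $v$ from Lemma \ref{harmonicFunction}) one has $v > 0$ in $\Omega$, and at each point $(x,-1)$ with $x > 0$ the set $K$ admits an interior ball of radius $x^2/4$, so Hopf's lemma applies and delivers $\partial_y v(x,-1) > 0$, whence $w > 0$ on this boundary piece.

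Next I control $w$ at infinity using Lemma \ref{linearConvergence}. Since $\nabla\tilde v_n \to \alpha e_y$ uniformly on the relevant portion of the upper half plane, rescaling back to the original coordinates shows that for every $\epsilon > 0$ there exists $R_\epsilon$ such that $|\nabla v(z) - \alpha e_y| \leq \epsilon$ uniformly on the arc $\partial B_R(0) \cap \Omega$ for $R \geq R_\epsilon$. Expanding $w(z) = -y\,\partial_x v(z) + x\,\partial_y v(z)$ and using $\alpha \geq 0$ together with $x \geq 0$ on $\Omega$, this gives the asymptotic bound
$$
    w(z) \geq \alpha x - C\epsilon R
    \quad\text{on } \partial B_R \cap \Omega,
$$
with the principal part $\alpha x$ non-negative on $\Omega$.

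Finally, I apply the maximum principle on $\Omega_R := \Omega \cap B_R(0)$ for $R \to \infty$. On the finite part of the boundary $w \geq 0$, while on $\partial B_R \cap \Omega$ the above asymptotic gives $w \geq -C\epsilon R$. The main obstacle is that this linear-in-$R$ error does not a priori vanish as $R \to \infty$, so the direct max principle alone yields $w(z_0) \geq -C\epsilon R$, which is not enough. I would overcome this through a Phragm\'en--Lindel\"of type argument: $\Omega$ has opening angle $\pi/2$ at infinity, so the critical growth order is $r^2$, and the linear bound on $v$ from Lemma \ref{linearBoundOverHarmonicLimit} forces only linear growth on $w$, well below threshold. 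Combined with the asymptotic non-negativity $w \geq \alpha x + o(R)$ on arcs, this rules out negative interior values. Equivalently, one can invoke the $O(R^{-2})$ decay of the harmonic measure of $\partial B_R \cap \Omega$ as seen from a fixed $z_0 \in \Omega$ (which is the quarter-plane-like rate), upgrading the bound to $w(z_0) \geq -C\epsilon R \cdot O(R^{-2}) \to 0$, and hence $w(z_0) \geq 0$.
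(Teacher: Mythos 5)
Your proposal is correct, and it is in fact more careful than the paper's own argument at the one place where the paper is delicate.

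The skeleton is the same as the paper's: you set $w := -y\,\partial_x v + x\,\partial_y v$ (the harmonic angular-momentum derivative), identify its sign on the finite part of $\partial(K\cap\{x>0\})$ exactly as the paper does (symmetry on $\{x=0\}$, $v\equiv 0$ tangentially on $\partial B_1$, Hopf on $\{y=-1\}$), and then use the blowdown of Lemma \ref{linearConvergence} to control the far field. The divergence comes in the closure step. On the arc $\partial B_R\cap\Omega$ the gradient bound $|\nabla v - \alpha e_y|\leq\epsilon$ gives, for the \emph{harmonic} $w$, only $w\geq \alpha x - C R\epsilon$; the error is $O(R\epsilon)$ and does not obviously vanish. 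The paper's phrasing ($\|\partial_\theta v - \alpha\langle e_y,\partial_\theta\rangle\|_{L^\infty}\to 0$ on the arcs, hence $\liminf\geq 0$) tacitly uses the unit-normalized angular derivative $\tfrac{1}{r}w$, for which the arc estimate \emph{is} $O(\epsilon)$, but that function is not harmonic and the maximum principle on $D_n$ does not apply to it. You explicitly flag this tension and resolve it the right way: since $\Omega$ is eventually contained in a quarter plane with vertex $(0,-1)$, the critical Phragm\'en--Lindel\"of exponent is $2$ and the harmonic measure of $\partial B_R\cap\Omega$ from a fixed interior point decays like $R^{-2}$, so $w(z_0)\geq -CR\epsilon\cdot O(R^{-2})\to 0$, giving $w\geq 0$. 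This is a genuine and correct fix.

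Two small remarks. First, the argument simplifies even further: the crude bound $|w|\leq r|\nabla v|\leq Cr$, which follows directly from $0\leq v(x,y)\leq C(y+1)$ and interior gradient estimates for positive harmonic functions, already places $-w$ at subcritical growth $O(r)=o(r^2)$, so Phragm\'en--Lindel\"of with the sign condition $w\geq 0$ on the finite boundary closes the proof without invoking the blowdown at all; the refined arc estimate $w\geq\alpha x - C\epsilon R$ is not actually needed. Second, a minor imprecision: at $(x,-1)$ with $x>0$, the interior ball tangent from inside $K$ must have radius $\rho<x^2/4$ (solving $x^2+(1-\rho)^2>(1+\rho)^2$), not $\rho=x^2/4$; this is cosmetic and does not affect the Hopf argument.
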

\begin{proof}
    By Lemma \ref{linearBoundOverHarmonicLimit}, we must have:
    $$
        \frac{\partial v}
        {\partial \theta}
        (0,y)
    =
        0.
    $$
    Also since $v$ is positive, and $v(x,-1) = 0$, for $x>0$ we must have:
    $$
        \frac{\partial v}
        {\partial \theta}(x,-1)
    \geq
        0.
    $$
    Also for $(x,y) \in \partial B_1(0)$ we have:
    $$
        \frac{\partial v}
        {\partial \theta}(x,y)
    =
        0.
    $$
    Given the sequence $R_n \rightarrow \infty$, define:
    $$
        D_n
    :=
        \{x\geq 0\}
        \cap 
        \{y\geq -1\}
        \cap
        B_{\frac{3}{2}R_n}(0)
        -
        B_1(0).
    $$
    If we prove that:
    $$
        \lim_n\inf_{(x,y) \in \partial B_{\frac{3}{2}R_n}(0) \cap \{y\geq -1; x\geq 0\}}
            \frac{\partial v}
            {\partial \theta}
        \geq
            0
    $$
    since $\frac{\partial v}{\partial \theta}$ is harmonic we conclude that:
    \begin{equation}
    \label{positivityOfAngularDeriv}
        \inf_{(x,y) \in D_n}
        \frac{\partial v}{\partial \theta}(x,y)
        \geq
        0,
    \end{equation}
    for all $n$. From Lemma \ref{linearConvergence} there exists $\alpha \geq 0$ such that we have that:
    $$
    ||
            \nabla v
            -
            \alpha e_y
        ||_{L^\infty(\partial B_{\frac{3}{2}R_n}(0)\cap \{y> -1\})}
    =
        ||\nabla \tilde{v}_n - \alpha e_y||_{L^\infty(\partial B_{\frac{3}{2}}(0,\frac{1}{R_n}) \cap\{y>0\})}
    \rightarrow
        0.
    $$
    Thus in particular:
    $$
        \bigg|\bigg|\frac{\partial v}{\partial \theta} - \alpha \langle e_y, \frac{\partial}{\partial \theta}_{(x,y)} \rangle\bigg|\bigg|_{L^\infty(\partial B_{\frac{3}{2}R_n}(0) \cap \{y>-1\})} \longrightarrow 0.
    $$
    Since $\langle e_y, \frac{\partial}{\partial \theta}_{(x,y)} \rangle \geq 0$ for points $(x,y)$ with $x > 0$, we conclude:
    $$
        \lim_n
        \inf_{(x,y) \in \partial B_{R_n}(0) \cap \{y\geq -1; x\geq 0\}}
        \frac
        {\partial v}
        {\partial \theta}(x,y)
    \geq
        0.
    $$
    Thus we conclude equation \eqref{positivityOfAngularDeriv} is satisfied for all $n$.
    and because $R_n \rightarrow \infty$, this concludes the proof.
\end{proof}

\begin{lemma}
\label{positiveDeriv}
    There exists $\gamma > 0$ such that for $$z \in {A}_1'
    := \{(x,y) \in \partial B_1(0) : -\cos(\theta_0) \leq \langle (x,y) , e_y \rangle \leq \cos(\theta_0); x \geq 0\}$$ we have:
    $$
        \frac
        {\partial^2 v}
        {\partial \theta \partial \nu}
        (z)
    \geq
        \gamma.
    $$
\end{lemma}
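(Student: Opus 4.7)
My plan is to introduce $w := \partial v / \partial \theta$ (with $\theta$ the polar angle at the origin) and observe that, because $v$ is harmonic in $K$ and the angular vector field is a Killing field for the Laplacian, $w$ is itself harmonic wherever it is defined. The claim then reduces to a Hopf-type lower bound for $w$ at interior points of the smooth arc $A_1'$.

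First I would inventory the boundary values of $w$ on the subdomain $K \cap \{x>0\}$. On $\partial B_1(0)$ the function $v$ vanishes, so its tangential derivative $w$ vanishes there as well; on the segment $\{x=0,\,y\ge 1\}$ the identity $v(x,y)=v(-x,y)$ from Lemma \ref{linearBoundOverHarmonicLimit} forces $v_x=0$, whence $w=-y\,v_x+x\,v_y=0$; and on $\{y=-1,\,x\ge 0\}$ one has $v_x(x,-1)=0$ (tangential to $v=0$) and $v_y(x,-1)\ge 0$ (Hopf at the flat boundary), so $w(x,-1)=x\,v_y(x,-1)\ge 0$. Combined with Lemma \ref{positiveAngleHarmonic}, which gives $w\ge 0$ on all of $K\cap\{x>0\}$, the strong maximum principle forces the dichotomy: either $w>0$ throughout the interior, or $w\equiv 0$.

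To rule out $w\equiv 0$, I would argue that if $w$ vanished identically in $K\cap\{x>0\}$, then $v$ would depend only on the polar radius $r=\sqrt{x^2+y^2}$ there and, being harmonic, would take the form $v(x,y)=a\log r+b$. The boundary condition $v=0$ on $\partial B_1(0)$ gives $b=0$, and $v=0$ on $\{y=-1\}$ forces $a=0$; combined with the symmetry $v(x,y)=v(-x,y)$ this gives $v\equiv 0$ on all of $K$, contradicting the non-triviality of $v$ obtained in Lemma \ref{harmonicFunction}. Hence $w>0$ in the interior. Applying Hopf's lemma at any $z\in A_1'$ (which lies in the smooth part of $\partial K$, strictly bounded away from the corner $(0,-1)$ thanks to the $\theta_0$ buffer) then yields $\partial w/\partial\nu(z)>0$ pointwise; since $\nu$ is the exterior normal to $B_1(0)$, this is exactly the pointwise strict positivity of $\partial^2 v/(\partial\theta\,\partial\nu)$.

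To promote this to a uniform bound $\gamma>0$ on the compact arc $A_1'$, I would invoke boundary elliptic regularity for the harmonic function $v$ vanishing on the smooth portion $\partial B_1(0)\cap\{|y|\le\cos\theta_0\}$ (Theorem \ref{boundaryReg}, combined with Schauder bootstrap or odd reflection across the arc). This yields $v\in C^\infty$ up to this arc, so $\partial^2 v/(\partial\theta\,\partial\nu)$ is continuous on the compact set $A_1'$, and its (necessarily positive by the previous paragraph) minimum $\gamma:=\min_{z\in A_1'}\partial^2 v/(\partial\theta\,\partial\nu)(z)>0$ furnishes the required constant. The main obstacle, to my mind, is step ruling out $w\equiv 0$: everything else is fairly mechanical once we know that $w$ is a nontrivial non-negative harmonic function vanishing on the arc, but this non-triviality step genuinely relies on combining the two disjoint Dirichlet pieces of $\partial K$ with the $H^1$ non-triviality established in Lemma \ref{harmonicFunction}.
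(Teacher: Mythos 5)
Your proposal is correct and follows essentially the same route as the paper: apply Hopf's lemma to the harmonic function $w=\partial v/\partial\theta$, using its nonnegativity from Lemma \ref{positiveAngleHarmonic} and the nontriviality of $v$, then upgrade the pointwise strict positivity of $\partial^2 v/(\partial\theta\,\partial\nu)$ to a uniform bound via continuity on the compact arc $A_1'$. In fact your write-up supplies details the paper leaves implicit and that are genuinely needed: the boundary-value inventory of $w$ on $\partial(K\cap\{x>0\})$, the strong-maximum-principle dichotomy upgrading $w\geq 0$ to $w>0$ in the interior (a hypothesis of Hopf's lemma the paper does not explicitly verify), and the explicit classification of radial harmonic functions ruling out $w\equiv 0$, which is the content behind the paper's terse remark that nontriviality of $v$ implies nontriviality of $\partial v/\partial\theta$.
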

\begin{proof}
    Using H\"opf's Lemma, Lemma \ref{positiveAngleHarmonic} and the fact that $v$ is non-trivial (which implies $\frac{\partial v}{\partial \theta}$ is non-trivial), we conclude that for $(x,y) \in \partial B_1(0)$ with $x > 0$ we must have:
    $$
        \frac
        {\partial^2 v}
        {\partial \nu \partial \theta}
        (x,y)
    =
        \frac
        {\partial^2 v}
        {\partial \theta \partial \nu}
        (x,y)
        >
        0.
    $$
    Thus using $C^2$ continuity in the set $A_1'$, we conclude that there exists $\gamma > 0$ such that for all $(x,y) \in \{(x,y) \in \partial B_1(0) : -\cos(\theta_0) \leq \langle (x,y) , e_y \rangle \leq \cos(\theta_0); x \geq 0\}$ we have:
    $$
         \frac
        {\partial^2 v}
        {\partial \theta \partial \nu}
        (x,y)
        \geq\gamma.
    $$
\end{proof}

Now we conclude the proof Proposition \ref{positiveIntegrals}.
\begin{proof}
    By Lemma \ref{positiveDeriv} we conclude that in the set $A_1'$, there exists $c>0$ such that:
    $$
        \int_{{A}_1'}
            \frac{\partial v}
                {\partial \nu}
                \frac{\partial v}
                {\partial y}
                d\mathcal{H}^1
        =
            \int_{{A}_1'}
                \left(
                    \frac{\partial v}
                    {\partial \nu}
                \right)^2
                \langle e_y, \nu \rangle
                d\mathcal{H}^1
            \geq
        c>0.
    $$
    This is the case since $(\frac{\partial v}{\partial \nu})^2$ increases as $\theta$ increases and $\langle e_y, \nu \rangle$ is an odd function in $\theta$.
    
    By the convergence from Lemma \ref{ConnectingConvergence}, we conclude that for $n \geq N$ where $N$ is big enough we have:
    \begin{equation}
        \int_{{A}_1'}
         \frac{\partial \overline{u}_n}
        {\partial \nu}
                \frac{\partial \overline{u}_n}
                {\partial y}
                d\mathcal{H}^1
            >
            0
    \end{equation}
    but this implies that
    $$
        \int_{A_1}
         \frac{\partial u_n}
        {\partial \nu}
                \frac{\partial u_n}
                {\partial y}
                d\mathcal{H}^1
            >
            0
    $$
    in contradiction with the initial hypothesis \eqref{contradictHyp}. This concludes the proof reaching contradiction.
\end{proof}

\subsection{Top Integral Lower Bound}
    In this section we prove the following proposition:
    \begin{prop}
        \label{topInt}
            If $\theta \in ]0,\frac{\pi}{2}[$, there exists $\Tilde{\delta} > 0$ and $c>0$ depending only on $\Omega$ such that if $\delta < \Tilde{\delta}$ and $\dist_{min}(\partial B_\delta(p), \partial \Omega) < \delta^2$ then:
            $$
                \int_{C_{\theta,\delta}^+(p)}
                \left(
                \frac
                 {\partial u_{\delta,p}}
            {\partial \nu}
            \right)
            \left(
            \frac
            {\partial u_{\delta,p}}
            {\partial y}
            \right)
            d\mathcal{H}^1
            \geq
            c\theta \delta
            $$
            where:
            $$
            C_{\theta,\delta}^+(p) =
                \left\{z \in \partial B_\delta(p):
    \frac{\langle z-p, e_y \rangle}{|z-p|}
    \geq
    \cos(\theta)
                \right\}.
            $$
            and $\nu$ is the exterior normal to $B_\delta(p)$.
        \end{prop}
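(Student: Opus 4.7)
The plan is to argue by contradiction, re-using the blowup machinery from the previous subsection. Suppose the conclusion of Proposition \ref{topInt} fails for some $\theta \in ]0, \pi/2[$: then for every $n \in \mathbb{N}$ there exist $\delta_n < 1/n$ and $p_n \in \Omega$ with $B_{\delta_n}(p_n) \subset \Omega$ and $\dist_{min}(\partial B_{\delta_n}(p_n), \partial \Omega) < \delta_n^2$, satisfying, with $u_n := u_{\delta_n, p_n}$,
$$
    \int_{C_{\theta, \delta_n}^+(p_n)}
    \frac{\partial u_n}{\partial \nu}
    \frac{\partial u_n}{\partial y}
    \, d\mathcal{H}^1
    < \frac{\theta \delta_n}{n}.
$$
After rotating so that $e_y$ is the interior normal at $z(p_n)$ and translating so that $p_n = 0$, consider the blowup $\overline{u}_n(x) := u_n(\delta_n x)/\delta_n$. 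Since $\nabla u_n(\delta_n x) = \nabla \overline{u}_n(x)$ and arc length rescales by $\delta_n$, the inequality above becomes
$$
    \int_{\Gamma_\theta}
    \frac{\partial \overline{u}_n}{\partial \nu}
    \frac{\partial \overline{u}_n}{\partial y}
    \, d\mathcal{H}^1
    < \frac{\theta}{n} \longrightarrow 0,
$$
where $\Gamma_\theta := \{z \in \partial B_1(0) : \langle z, e_y\rangle \geq \cos\theta\}$.

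Lemma \ref{harmonicFunction}, whose proof requires only $\delta_n \to 0$ together with $\dist_{min}(\partial B_{\delta_n}(p_n), \partial \Omega) < \delta_n^2$, then yields, up to a subsequence, weak $H^1_{loc}(K)$ convergence of $\overline{u}_n$ to a nontrivial nonnegative harmonic function $v$ on $K$ with $v|_{\partial K} = 0$. Choosing the angle $\theta_0$ in Lemma \ref{ConnectingConvergence} with $\theta_0 > \theta$ puts $\Gamma_\theta$ inside $V$, so $\nabla \overline{u}_n \to \nabla v$ uniformly on $\Gamma_\theta$. Passing to the limit gives
$$
    \int_{\Gamma_\theta}
    \frac{\partial v}{\partial \nu}
    \frac{\partial v}{\partial y}
    \, d\mathcal{H}^1
    \leq 0.
$$

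The contradiction will come from showing this limit is strictly positive. Since $v = 0$ on $\partial B_1(0)$, on that boundary $\nabla v = (\partial_\nu v)\nu$ with $\nu(z) = z$, so the integrand equals $(\partial_\nu v)^2 \langle e_y, z\rangle$; on $\Gamma_\theta$ one has $\langle e_y, z\rangle \geq \cos\theta > 0$. Because $\Gamma_\theta \subset \partial B_1(0) \cap \{y \geq \cos\theta\}$ is well separated from the singular corners of $\partial K$, Hopf's lemma applied to the nontrivial nonnegative harmonic function $v$ gives $\partial_\nu v > 0$ pointwise on $\Gamma_\theta$; by $C^1$ regularity up to this smooth boundary piece and compactness there is $c_0 > 0$ with $\partial_\nu v \geq c_0$ on $\Gamma_\theta$, hence
$$
    \int_{\Gamma_\theta}
    \frac{\partial v}{\partial \nu}
    \frac{\partial v}{\partial y}
    \, d\mathcal{H}^1
    \geq 2 c_0^2 \cos\theta \sin\theta > 0,
$$
contradicting the previous display. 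The main subtlety is that $c_0$ depends on the (sequence-dependent) limit $v$, so this produces a constant $c = c(\theta, \Omega) > 0$ only abstractly through the compactness/contradiction scheme; the existence of such a lower bound $c_0$ rests entirely on the non-triviality of $v$ supplied by Lemma \ref{harmonicFunction}, together with $\Gamma_\theta$ being a smooth boundary arc of $K$ bounded away from its singular corners.
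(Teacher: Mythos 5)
Your approach is the same as the paper's: negate the inequality, blow up along the bad sequence, invoke Lemma~\ref{harmonicFunction} for a non-trivial positive harmonic limit $v$ on $K$ and Lemma~\ref{ConnectingConvergence} for uniform $C^1$ convergence up to the smooth arc, then contradict via Hopf's lemma. That matches the paper's proof in structure and in all the key ingredients, so the core of your argument is sound.

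There is, however, a genuine gap that you partially flag but do not close: you fix $\theta$ before running the contradiction, so the constant you produce is $c = c(\theta,\Omega)$, whereas the proposition (and its later use) requires $c$ to depend on $\Omega$ alone. This uniformity is load-bearing: in the proof of Theorem~\ref{RealMainTheorem} one picks $\theta$ so small that $C\theta^2 < c\theta$, i.e.\ $\theta < c/C$, and this is only possible if $c$ does not degenerate as $\theta \to 0$. Your contradiction scheme, as set up, cannot rule that out because the blowup limit $v$ (and hence your $c_0$) can depend on $\theta$. The paper avoids this by allowing $\theta_n$ to vary in the contradiction sequence: one then gets a single blowup limit $v$, takes $c_0 := \inf_{\partial B_1(0)\cap\{y\geq 0\}} \partial_\nu v > 0$ (Hopf plus continuity on a compact arc independent of $\theta$), and observes that the normalized integral satisfies
\[
\frac{1}{\theta_n}\int_{C^+_{\theta_n,1}(0)} \left(\frac{\partial \overline u_n}{\partial\nu}\right)^{\!2}\langle e_y,\nu\rangle\, d\mathcal H^1 \;\geq\; (c_0/2)^2\cdot \frac{2\sin\theta_n}{\theta_n}\;\geq\; \frac{4}{\pi}\,(c_0/2)^2 \;>\;0,
\]
for $n$ large, which contradicts the hypothesis regardless of where $\theta_n$ sits in $]0,\pi/2[$. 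To repair your proof you should negate the full quantified statement, introduce the sequence $\theta_n$, and argue exactly as above. As a minor aside, your last display has a small arithmetic slip: $\int_{\Gamma_\theta}\langle e_y,\nu\rangle\,d\mathcal H^1 = 2\sin\theta$, so the lower bound should read $2c_0^2\sin\theta$ (or, using your cruder pointwise estimate $\langle e_y,\nu\rangle\geq\cos\theta$ over an arc of length $2\theta$, $2c_0^2\theta\cos\theta$), not $2c_0^2\cos\theta\sin\theta$; this does not affect positivity but is worth correcting.
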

\begin{proof}
        The proof will be done by contradiction as in the section for the side integrals. Thus we suppose by contradiction that there exists a sequence $\theta_n \in ]0, \frac{\pi}{2}[$, $\delta_n \rightarrow 0$ and $p_n$ such that $\dist_{min}(\partial B_{\delta_n}(p_n),\partial \Omega)$ satisfying:
        \begin{equation}
        \label{convergenceTozeroContradictionStuff}
            \frac{1}{\theta_n\delta_n}
            \int_{C_{\theta_n,\delta_n}^+(p_n)}
            \left(
                \frac
                 {\partial u_n}
            {\partial \nu}
            \right)
            \left(
            \frac
            {\partial u_n}
            {\partial y}
            \right)
            d\mathcal{H}^1
            \rightarrow
            0.
        \end{equation}
        By considering the blowup given by:
        $$
            \overline{u}_n(x)
:=
    \frac
    {u_n}
    {\delta_n}(\delta_n x + p_n),
        $$
        we can use Lemmas \ref{harmonicFunction} and \ref{ConnectingConvergence} to conclude there exists $v: K \rightarrow \mathbb{R}$ harmonic in $K$ such that:
        \begin{equation}
        \label{convergenceOfDerivativesInCircleTop}
            ||\nabla v - \nabla \overline{u}_n||_{L^\infty(B_2^+(0) - B_1(0))}
            \rightarrow
            0
        \end{equation}
        where $K = \{(x,y):y\geq -1\} - B_1(0)$. By \ref{harmonicFunction} $v$, is a non-trivial positive harmonic function, thus by H\"opf's Lemma, we conclude there exists $c>0$ such that:
        \begin{equation}
        \label{HopfsLemmaStuff}
            \sup_{z \in B_1^+(0)}|\nabla v(z)|
            \geq
            c>0
        \end{equation}
        Using \eqref{convergenceOfDerivativesInCircleTop} and \eqref{HopfsLemmaStuff}, for $n$ big enough we have that:
        $$
            \sup_{z \in C_{\theta_n,\delta_n}^+(p_n)}
            \frac{\partial \overline{u}_n}
            {\partial \nu}(z)
        \geq
            \frac{c}{2}.
        $$
        This implies that for $n$ big enough there exists $\alpha >0$ such that:
        \begin{align}
            \frac{1}{\theta_n\delta_n}
            \int_{C_{\theta_n,\delta_n}^+(p_n)}
            \left(
                \frac
                 {\partial u_n}
            {\partial \nu}
            \right)
            \left(
            \frac
            {\partial u_n}
            {\partial y}
            \right)d\mathcal{H}^1
        &=
            \frac{1}{\theta_n}
            \int_{C_{\theta_n,1}(0)}
            \left(
                \frac
                 {\partial \overline{u}_n}
            {\partial \nu}
            \right)
            \left(
            \frac
            {\partial \overline{u}_n}
            {\partial y}
            \right)d\mathcal{H}^1\\
        &\geq
            \frac{(c/2)^2}{\theta_n}
            \int_{C_{\theta_n,1}(0)}
            \langle e_y, \nu_z \rangle
            d\mathcal{H}^1
        \geq \alpha>0.
    \label{lowBoundContradiction}
        \end{align}
        We have that \eqref{lowBoundContradiction} contradicts the hypothesis \eqref{convergenceTozeroContradictionStuff}, which concludes the proof.
    \end{proof}

\section{Conclusion of Theorem}
Now we conclude the proof of Theorem \ref{RealMainTheorem}.

\begin{proof}
    Let $c>0$ be the constant from Proposition \ref{topInt} and $C>0$ the constant from Proposition \ref{bottomThm}. Choose $\theta >0$ such that:
    $$
        C\theta^2 < c\theta.
    $$
    By Propositions \ref{topInt} and \ref{bottomThm} choose $\Tilde{\delta} = \Tilde{\delta}(\Omega, \theta)$ small enough such that for $\delta < \Tilde{\delta}$ and $\dist_{min}(\partial B_\delta(p),\partial \Omega) <\delta^2$ we have:
    \begin{equation}
    \label{eq1}
        \int_{C_\theta^+}
        \frac{\partial u_{\delta,p}}
        {\partial \nu}
                \frac{\partial u_{\delta,p}}
                {\partial y}
                d\mathcal{H}^1
                \geq
                c\delta\theta;
    \end{equation}
    \begin{equation}
    \label{eq2}
        \int_{C_\theta^-}
        \frac{\partial u_{\delta,p}}
        {\partial \nu}
                \frac{\partial u_{\delta,p}}
                {\partial y}
                d\mathcal{H}^1
                \geq
                -C\delta\theta^2.
    \end{equation}
    Also for $A = \partial B_{\delta}(p) - \left(C_\theta^+ \cup C_\theta^-\right)$, by Proposition \ref{positiveIntegrals} there exists $\Tilde{\delta}(\theta, \Omega)$ small enough such that for $\delta < \Tilde{\delta}$ then:
    \begin{equation}
        \label{eq3}
        \int_{A}
        \frac{\partial u_{\delta,p}}
        {\partial \nu}
                \frac{\partial u_{\delta,p}}
                {\partial y}
                d\mathcal{H}^1
                \geq
                0.
    \end{equation}
    Combining equation \eqref{eq1}, \eqref{eq2} and \eqref{eq3} we conclude that:
    $$
        \int_{\partial B_\delta(p)}
        \frac{\partial u_{\delta,p}}
        {\partial \nu}
                \frac{\partial u_{\delta,p}}
                {\partial y}
                d\mathcal{H}^1
        \geq
        \delta(
        c\theta
        -
        C\theta^2
        )
        >
        0,
    $$
    concluding the proof.
\end{proof}
\printbibliography

@book{Henrot,
author = {Henrot, Antoine},
year = {2006},
month = {01},
pages = {},
publisher={Springer Science \& Business Media},
title = {Extremum Problems for Eigenvalues of Elliptic Operators},
isbn = {978-3-7643-7705-2},
doi = {10.1007/3-7643-7706-2}
}

@article{bucur,
author = {Bucur, Dorin and Buttazzo, Giuseppe},
year = {2005},
month = {01},
pages = {},
title = {Variational Methods in Shape Optimization Problems},
journal = {Progress in Nonlinear Differential Equations and Their Application}
}

@article{regularity,
author = {Weiss, Georg and Arshak, Petrosyan and Henrik, Shahgolian and Nina, Uraltseva},
year = {2013},
month = {09},
pages = {},
title = {Regularity of Free Boundaries in Obstacle-Type Problems},
volume = {115},
journal = {Jahresbericht der Deutschen Mathematiker-Vereinigung},
doi = {10.1365/s13291-013-0064-4}
}

@article{approxEigenHole,
title = {Approximation of Dirichlet Eigenvalues on Domains with Small Holes},
journal = {Journal of Mathematical Analysis and Applications},
volume = {193},
number = {1},
pages = {169-199},
year = {1995},
issn = {0022-247X},
doi = {https://doi.org/10.1006/jmaa.1995.1228},
url = {https://www.sciencedirect.com/science/article/pii/S0022247X85712280},
author = {M. Flucher},
abstract = {Approximation formulas for the eigenvalues of the Laplacian with Dirichlet boundary conditions on domains with small holes are derived and discussed. The corresponding results for the free and the supported vibrating plate follow. A conceptually simple proof is given based on Courant′s min-max principle. The author′s approximation for the capacity of small balls leads to a highly accurate eigenvalue approximation formula for domains with spherical holes. General holes are treated by means of a harmonic correction method, an isoperimetric inequality relating capacity to volume, and a Poincaré inequality for capacity potentials. In addition we provide L∞-bounds for the eigenfunctions.}
}

@article{convexDrumsHole,
author = {II, Evans M. Harrell and Kr\"{o}ger, Pawel and Kurata, Kazuhiro},
title = {On the Placement of an Obstacle or a Well so as to Optimize the Fundamental Eigenvalue},
journal = {SIAM Journal on Mathematical Analysis},
volume = {33},
number = {1},
pages = {240-259},
year = {2001},
doi = {10.1137/S0036141099357574},

URL = { 
    
        https://doi.org/10.1137/S0036141099357574
    
    

},
eprint = { 
    
        https://doi.org/10.1137/S0036141099357574
    
    

}
,
    abstract = { We investigate how to place an obstacle B within a domain \$\Omega\$ in Euclidean space so as to maximize or minimize the principal Dirichlet eigenvalue for the Laplacian on \$\Omega \setminus B\$. The shape of B is fixed a priori (usually as a ball), and only its position varies. We establish that for a certain class of domains the minimizing B is in contact with \$\partial \Omega\$, while the maximizing B is in the interior, typically at the center (supposing that the domain is sufficiently symmetric for this statement to be meaningful). Under special circumstances we can characterize the optimizing configurations with multiple obstacles. Our method relies on the Hadamard perturbation formula and a moving plane analysis.Similar facts are proved when the hard obstacle is replaced by a central nonnegative potential function supported in B, and we consider the Schrödinger operator with this potential. Complementary facts are proved when the obstacle is replaced by a central nonpositive potential function. }
}

@misc{susto,
  doi = {10.48550/ARXIV.1706.02138},
  
  url = {https://arxiv.org/abs/1706.02138},
  
  author = {Georgiev, Bogdan and Mukherjee, Mayukh},
  
  keywords = {Analysis of PDEs (math.AP), Optimization and Control (math.OC), Spectral Theory (math.SP), FOS: Mathematics, FOS: Mathematics},
  
  title = {On maximizing the fundamental frequency of the complement of an obstacle},
  
  publisher = {arXiv},
  
  year = {2017},
  
  copyright = {arXiv.org perpetual, non-exclusive license}
}

@misc{henrotPerimeter,
  doi = {10.48550/ARXIV.1702.01307},
  
  url = {https://arxiv.org/abs/1702.01307},
  
  author = {Henrot, Antoine and Zucco, Davide},
  
  keywords = {Analysis of PDEs (math.AP), Optimization and Control (math.OC), Spectral Theory (math.SP), FOS: Mathematics, FOS: Mathematics, 28A75, 35P15, 49Q10, 49Q15},
  
  title = {Optimizing the first Dirichlet eigenvalue of the Laplacian with an obstacle},
  
  publisher = {arXiv},
  
  year = {2017},
  
  copyright = {arXiv.org perpetual, non-exclusive license}
}

@article{Anel2D,
author = {Ramm, Alexander and Shivakumar, Pappur},
year = {1998},
month = {},
pages = {559—563},
title = {Inequalities for the minimal eigenvalue of the {L}aplacian in an annulus},
volume = {1},
number={4},
journal = {Mathematical Inequalities \& Applications},
doi = {10.7153/mia-01-54}
}

@article{ANELND, 
title={On two functionals connected to the Laplacian in a class of doubly connected domains}, volume={133}, 
DOI={10.1017/S0308210500002560}, number={3}, 
journal={Proceedings of the Royal Society of Edinburgh Section A: Mathematics}, 
publisher={Royal Society of Edinburgh Scotland Foundation}, 
author={Kesavan, S.}, 
year={2003}, 
pages={617–624}
}

@article{2ndEigenvalueAnel,
  title={Where to place a spherical obstacle so as to maximize the second {D}irichlet eigenvalue},
  author={El Soufi, Ahmad and Kiwan, Rola},
 journal={Communications on Pure and Applied Mathematics},
 volume={7},
 number={5},
 pages={1193--1201},
  year={2008}
}

@misc{DihedralStuff,
  doi = {10.48550/ARXIV.1707.01368},
  
  url = {https://arxiv.org/abs/1707.01368},
  
  author = {Chorwadwala, Anisa M. H. and Roy, Souvik},
  
  keywords = {Analysis of PDEs (math.AP), FOS: Mathematics, FOS: Mathematics, 35J10, 35P15, 49R05, 58J50},
  
  title = {How to place an obstacle having a dihedral symmetry centered at a given point inside a disk so as to optimize the fundamental Dirichlet eigenvalue},
  
  publisher = {arXiv},
  
  year = {2017},
  
  copyright = {arXiv.org perpetual, non-exclusive license}
}

@article{isoperimetricStekolv,
	doi = {10.2140/pjm.2022.320.241},
  
	url = {https://doi.org/10.2140%2Fpjm.2022.320.241},
  
	year = 2022,
	month = {dec},
  
	publisher = {Mathematical Sciences Publishers},
  
	volume = {320},
  
	number = {2},
  
	pages = {241--259},
  
	author = {Nunzia Gavitone and Gloria Paoli and Gianpaolo Piscitelli and Rossano Sannipoli},
  
	title = {An isoperimetric inequality for the first
		Steklov{\textendash}Dirichlet Laplacian eigenvalue of convex sets with a spherical hole},
  
	journal = {Pacific Journal of Mathematics}
}

@article{darioStuff,
  title={Asymptotic spherical shapes in some spectral optimization problems},
  author={Mazzoleni, Dario and Pellacci, Benedetta and Verzini, Gianmaria},
 journal={Journal de Math{\'e}matiques Pures et Appliqu{\'e}es},
 volume={135},
 pages={256--283},
 year={2020},
 publisher={Elsevier}
}

@article{predatorPrayStuff,
title = {Internal stabilizability for a reaction–diffusion problem modeling a predator–prey system},
journal = {Nonlinear Analysis: Theory, Methods \& Applications},
volume = {61},
number = {4},
pages = {491-501},
year = {2005},
issn = {0362-546X},
doi = {https://doi.org/10.1016/j.na.2004.09.055},
url = {https://www.sciencedirect.com/science/article/pii/S0362546X04004596},
author = {Bedr’Eddine Ainseba and Sebastian Aniţa},
keywords = {Reaction–Diffusion systems, Stabilization, Non-controllability},
abstract = {In this work we consider a 2× 2 system of semilinear partial differential equations of parabolic-type describing interactions between a prey population and a predator population, featuring a Holling-type II functional response to predation. We address the question of stabilizing the predator population to zero, upon using a suitable internal control supported on a small subdomain ω of the whole spatial domain Ω, and acting on predators. We give necessary and sufficient conditions for this stabilizability result to hold.}
}
\end{document}